\documentclass[final,hidelinks,onefignum,onetabnum]{siamart190516}

\usepackage{lipsum}
\usepackage{amsfonts}
\usepackage{graphicx}
\usepackage{epstopdf}
\usepackage{algorithmic}
\ifpdf
  \DeclareGraphicsExtensions{.eps,.pdf,.png,.jpg}
\else
  \DeclareGraphicsExtensions{.eps}
\fi
\usepackage{enumitem}
\usepackage{amssymb}
\usepackage{mathtools}
\usepackage{autonum}

\renewcommand{\u}{\vec{u}}
\newcommand{\vpsi}{\vec{\psi}}

\newcommand{\x}{\vec{x}}
\newcommand{\xx}{\tilde{x}}
\newcommand{\rr}{\tilde{r}}
\newcommand{\G}{\vec{G}}
\newcommand{\y}{\vec{y}}
\newcommand{\V}{\vec{V}}
\newcommand{\w}{\vec{w}}
\renewcommand{\v}{\vec{v}}
\newcommand{\g}{\vec{g}}

\newcommand{\vchi}{\vec{\chi}}

\newcommand{\f}{\vec{f}}
\newcommand{\R}{\mathbb{R}}

\newcommand{\Th}{\mathcal{T}_h}

\newcommand{\lnhh}{\lvert\ln h\rvert}
\newcommand{\Om}{\Omega}

\DeclarePairedDelimiter{\abs}{\lvert}{\rvert}
\DeclarePairedDelimiter{\norm}{\lVert}{\rVert}

\DeclarePairedDelimiter{\onenorm}{\lVert}{\rVert_{L^1(\Omega)}}
\DeclarePairedDelimiter{\inftynorm}{\lVert}{\rVert_{L^{\infty}(\Omega)}}
\DeclarePairedDelimiter{\twonorm}{\lVert}{\rVert_{L^2(\Omega)}}
\DeclarePairedDelimiter{\stwonorm}{\lVert}{\rVert^2_{L^2(\Omega)}}

\newsiamremark{remark}{Remark}
\newsiamremark{hypothesis}{Hypothesis}
\crefname{hypothesis}{Hypothesis}{Hypotheses}
\newsiamthm{claim}{Claim}
\newsiamthm{assumption}{Assumption}
\crefname{assumption}{Assumption}{Assumptions}

\headers{Stokes global and local pointwise error estimates}{N. Behringer, D. Leykekhman and B. Vexler}

\title{Global and local pointwise error estimates for finite element approximations to the Stokes problem on convex polyhedra%
\thanks{\funding{The first author gratefully acknowledges support from the International Research	Training Group IGDK, funded by the German Science Foundation (DFG) and the Austrian Science Fund (FWF). The second author was supported by the NSF grant DMS-1913133.}}}

\author{Niklas Behringer\thanks{Chair of Optimal Control, Center for Mathematical Sciences, Technical University of Munich, 85748 Garching by Munich, Germany (\email{nbehring@ma.tum.de}, \email{vexler@ma.tum.de}).}
\and Dmitriy Leykekhman\thanks{Department of Mathematics, University of Connecticut, Storrs, CT 06269 (\email{dmitriy.leykekhman@uconn.edu}).}
\and Boris Vexler\footnotemark[2]}

\usepackage{amsopn}

\ifpdf
\hypersetup{
  pdftitle={Global and local pointwise error estimates for finite element approximations to the Stokes problem on convex polyhedra},
  pdfauthor={N. Behringer, D. Leykekhman, and B. Vexler}
}
\fi

\externaldocument{ex_supplement}

\begin{document}

\maketitle

\begin{abstract}
  The main  goal of the paper is to show new stability and localization results for the finite element solution of the Stokes system in $W^{1,\infty}$ and $L^{\infty}$ norms under standard assumptions on the finite element spaces on quasi-uniform meshes in two and three dimensions.  Although interior error estimates are well-developed for the elliptic problem, they appear to be new for the Stokes system on unstructured meshes.  To obtain these results we extend previously known stability estimates for the Stokes system using regularized Green's functions. 
\end{abstract}

\begin{keywords}
  maximum norm, finite element, best approximation, error estimates, Stokes.
\end{keywords}

\begin{AMS}
  65N30, 65N15.
\end{AMS}

\section{Introduction}
\label{section:introduction}
In the introduction and the major part of the paper we focus on the three-dimensional setting. However, our results are valid in two dimensions and we comment on that at the end of the paper.
We assume $\Omega \subset \R^3$ is a convex polyhedral domain, on which we consider the following Stokes problem:
\begin{subequations}
	\begin{align}
	-\Delta \u + \nabla p &= \f \quad \text{ in } \Omega, \label{eq:def_stoke_1} \\
	\nabla \cdot \u &= 0 \quad \text{ in } \Omega, \label{eq:def_stoke_2}\\
	\u &= \vec 0 \quad \text{ on } \partial \Omega, \label{eq:def_stoke_3}
	\end{align}
\end{subequations}
with $\f =(f_1,f_2,f_3)$ be such that $\u\in (H^1_0(\Omega)\cap L^{\infty}(\Om))^3$ or respectively $\u\in (H^1_0(\Omega)\cap W^{1,\infty}(\Om))^3$ and $p\in L^{\infty}(\Omega)$.
The solution $p$ is unique up to a constant, we choose $p\in L^2_0(\Omega)$, i.e. $p$ has zero mean.

This paper is the first paper in our program to establish best approximation results for the fully discrete approximations for transient Stokes systems in  $L^\infty$ and $W^{1,\infty}$ norms. Similar program was carried out by the last two authors for the parabolic problems in a series of papers \cite{MR3470741,MR3498514,MR3606467,MR3687851}. The approach there relies on stability of the Ritz projection, resolvent estimates in  $L^\infty$ and $W^{1,\infty}$ norms and discrete maximum parabolic regularity. We intend to derive corresponding results for the Stokes systems. In this paper,
we give a new $L^{\infty}$ stability result of the form
\begin{equation}
\inftynorm{\u_h} \leq C\lnhh \Big(\lnhh\inftynorm{\u}+ h\inftynorm{p} \Big). \label{eq:linfty_global}
\end{equation}
In a second step we prove respective local versions of \cref{eq:linfty_global} and of the corresponding $W^{1,\infty}$ results from \cite{MR3422453,MR2945141}. These estimates take the form
\begin{multline}
\norm{\nabla \u_h}_{L^{\infty}(D_1)} + \norm{p_h}_{L^{\infty}(D_1)}
\\ \leq C\left(\norm{\nabla\u}_{L^{\infty}(D_2)} + \norm{p}_{L^{\infty}(D_2)} \right)
+C_d\Big(\twonorm{\nabla\u} + \twonorm{p}\Big)
\end{multline}
and
\begin{multline}
\norm{\u_h}_{L^{\infty}(D_1)}
\leq C\lnhh\left(\lnhh \norm{\u}_{L^{\infty} (D_2)} +  h\norm{p}_{L^{\infty} (D_2)}\right) \\
+ C_d\lnhh\left(\twonorm{\u}+ h \norm{\u}_{H^1(\Omega)}  +  h \twonorm{p}\right),
\end{multline}
where for $\xx\in \Omega$, $D_1= B_r(\xx)\cap \Omega$, $D_2= B_{\rr}(\xx)\cap \Omega$, $\rr > r >0$ and $C_d$ depends on $d=\abs{r-\rr}>\bar\kappa h$.

Global pointwise error estimates for the Stokes system similarly to \cref{eq:linfty_global} have been thoroughly discussed in recent years. The three-dimensional $W^{1,\infty}$ case was first discussed in \cite{MR2217368,MR2121575} under smoothness assumptions on the domain or limiting angles in non-smooth domains. Later on, using new results on convex polyhedral domains, e.g. from \cite{MR725151,MR2641539,MR2808700}, the limitations on the domain were weakened in \cite{MR3422453,MR2945141}.
The $L^{\infty}$ bounds were first discussed for $\Omega \subset \R^2$ in \cite{MR935076} and for dimensions greater than one and smooth domains in \cite{MR2217368} but with the $W^{1,\infty}$ norm appearing on the right-hand side and using weighted norms, which is not sufficient for the applications we have in mind.  

Interior (or local) maximum norm estimates are well-known for elliptic equations, see, e.g., \cite{MR3614014,MR0431753}, and are particularly useful when dealing with scenarios where the solution has low regularity close to the boundary or on local subsets of $\Omega$, e.g. for optimal control problems with pointwise state constraints, sparse optimal control and pointwise best approximation results for the time dependent problem, see \cite{Reyes2008,MR3498514,MR3072225}. For the Stokes system, the only pointwise interior error estimates are available on regular translation invariant meshes in two dimensions \cite{MR2297317}.
To our best knowledge, the interior results presented here are novel and have not been discussed before. 

Let us quickly comment on one property specific to the Stokes problem. Regularity results typically appear as velocity-pressure pair where the pressure has lower norm, e.g. $\inftynorm{\nabla \u}$ and $\inftynorm{p}$. This pair can then be estimated as in \cite{MR3422453,MR2945141}. Thus, we only supply estimates for $\inftynorm{\u_h}$ in the max-norm estimate since bounds for $\norm{p_h}_{W^{-1,\infty}(\Omega)}$ would add another layer of complexity and to our knowledge have no apparent advantages.

In three dimensions our proof of the local estimates is essentially based on $L^1$ and weighted estimates of regularized Green's functions. For $W^{1,\infty}$ it is enough to slightly adapt the results from \cite{MR2945141} for the Green's function of velocity and pressure.

In the case of $L^{\infty}$, we prove the respective estimates using the local energy estimates given in \cite{MR2945141} and estimates for Green's matrix of the Stokes system, see, e.g., \cite{MR2641539}.
Furthermore, another important element of the proof for $L^{\infty}$ is a pointwise estimate of the Ritz projection \cite{MR3470741}. Using the stability result proven there, we are able to carry out our proof without the need to discuss the behavior of the discrete solution along finite element boundaries.

In two dimensions our approach for the local estimates follows along the lines of the three-dimensional case. Here the estimates for the regularized Green's functions and the Ritz projection are all known from the literature, see \cite{MR935076,MR2121575,MR551291}. The results from \cite{MR935076,MR2121575} are derived using an alternative technique, the global weighted approach as introduced in \cite{MR0474884,MR0488859}. For the global weighted approach we need similar but slightly different assumptions on the finite element space than for the local energy estimate technique in the three-dimensional setting. Thus, to keep the notation simple, we deal with the two dimensional case in a separate section at the end of this work.

Several important applications from Navier-Stokes free surface flows to the numerical analysis of finite-element schemes for non-Newtonian flows have already been noted in \cite{MR2121575}. As mentioned, interior estimates play a role specifically for optimal control problems with state constraints, e.g. in \cite{MR2472877}.
Stokes optimal control problems are also closely related to subproblems in optimal control of Navier-Stokes systems where for Newton iterations one has to solve linearized optimal control subproblems in each step, see, e.g. \cite{MR2154110}.

An outline of this paper is as follows. In \Cref{section:assumptions_and_main_results}, we introduce notation and state assumptions on the approximation operators as well as the main results of our analysis. \Cref{section:proof_of_main_theorems} gives key arguments for the proof of the main theorems for the velocity and reduces them to the estimates of regularized Green's functions, which are derived in \Cref{section:duality_interpolation}. Based on these results, we deal with bounds for the pressure in \Cref{section:pressure}. Finally, in the last section we show the local estimates in two dimensions.

\section{Assumptions and main results in three dimensions}
\label{section:assumptions_and_main_results}

\subsection{Notation}
We now introduce basic notation. Throughout this paper, we use the usual notation for the Lebesgue, Sobolev and H\"older spaces. These spaces can be extended in a straightforward manner to vector functions, with the same notation but with the following modification for the norm in the non-Hilbert case: if $\u = (u_1,u_2,u_3)$, we then set
\begin{equation}
\norm{\u}_{L^r(\Omega)} = \left[\int_{\Omega} \abs{\u(\x)}^rd\x\right ]^{1/r}
\end{equation}
where $\abs{\:\cdot\:}$ denotes the Euclidean vector norm for vectors or the Frobenius norm for tensors.

We denote by $(\:\cdot\:,\:\cdot\:)$ the $L^2(\Omega)$ inner product and specify subdomains by subscripts in the case they are not equal to the whole domain.
In the analysis, we also make use of the weight $\sigma =\sigma_{\vec{x}_0,h}(\vec x)= \sqrt{\abs{\vec x - \x_0}^2 + (\kappa h)^2}$ for which $\x_0$, $\kappa$ and $h$ will be defined later on.

\subsection{Basic estimates}%
\label{section:continuousproblem}
Next we want to recall some results for solutions to \cref{eq:def_stoke_1,eq:def_stoke_2,eq:def_stoke_3}. Existence and uniqueness of the solutions to the problem on bounded domains are shown in \cite{MR2808162}. For the proof of the respective regularity estimates on convex polyhedral domains we refer to \cite{MR977489,MR2321139}. 
For $\f \in H^{-1}(\Omega)^3$ there holds
\begin{equation}
\norm{\u}_{H^1(\Omega)}+\norm{p}_{L^2(\Omega)}\leq C\norm{\f}_{H^{-1}(\Omega)}. \label{eq:H1_estimate2}
\end{equation}
Furthermore, for $\f \in L^2(\Omega)$, $(\u,p)$ are elements of $(H^1_0(\Omega)\cap H^{2}(\Omega))^3\times H^{1}(\Omega)$ and it holds
\begin{equation}
\norm{\u}_{H^2(\Omega)} + \norm{p}_{H^1(\Omega)} \leq C\twonorm{\f}. \label{eq:H2_estimate}
\end{equation}

\subsubsection{Local \texorpdfstring{{$H^2$}}{H2} stability estimates}
In the following analysis we will also require the following localized $H^2$ stability estimates.

\begin{lemma}
	\label{lemma:localh2}
	Let $A_1 = B_r(\tilde x) \cap \Omega$, $A_2 = B_{\tilde r}(\tilde x) \cap \Omega$ for $\tilde x\in \Omega$ and $\tilde r > r >0$. We denote the difference of the radii by $d=\abs{\tilde r -r}$. Furthermore let $(\u,p)$ be the solution to \cref{eq:def_stoke_1,eq:def_stoke_2,eq:def_stoke_3}. Then, it holds
	\begin{equation}
	\norm{\u}_{H^2(A_1)} + \norm{p}_{H^1(A_1)} \leq C\Big(\norm{\f}_{L^2(A_2)} + \frac{1}{d}\norm{\nabla \u}_{L^2(A_2)}+\frac{1}{d^2}\norm{ \u}_{L^2(A_2)}+\frac{1}{d}\norm{p}_{L^2(A_2)}\Big).
	\end{equation}
\end{lemma}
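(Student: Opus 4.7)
My plan is to localize $(\u,p)$ by multiplication with a smooth cutoff, correct the resulting non-vanishing divergence using a Bogovskii-type operator, and then apply the global $H^2$ regularity estimate \cref{eq:H2_estimate} on $\Omega$. First, I would introduce $\eta \in C_c^\infty(B_{\tilde r}(\tilde x))$ with $\eta \equiv 1$ on $B_r(\tilde x)$ and $\abs{\nabla^k \eta} \le C d^{-k}$ for $k=0,1,2$. A direct computation shows that $\tilde\u := \eta\u$ and $\tilde p := \eta p$ satisfy on $\Omega$
\begin{align*}
-\Delta \tilde\u + \nabla \tilde p &= \eta\f - 2(\nabla\eta\cdot\nabla)\u - (\Delta\eta)\u + (\nabla\eta)p, \\
\nabla\cdot \tilde\u &= \nabla\eta\cdot\u,
\end{align*}
with $\tilde\u = 0$ on $\partial\Omega$, and the source term is $L^2$-bounded with exactly the $d^{-1}$ and $d^{-2}$ weights appearing in the statement.

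Since $\tilde\u$ is not divergence free, I would next solve $\nabla\cdot\w = \nabla\eta\cdot\u$ with $\w \in H^1_0(G)\cap H^2(G)$ on the convex Lipschitz domain $G := B_{\tilde r}(\tilde x)\cap\Omega$ via the Bogovskii operator. The compatibility condition $\int_G \nabla\eta\cdot\u = 0$ is automatic from $\eta\u|_{\partial G} = 0$. The standard $H^2$ bound for Bogovskii on a convex bounded domain then gives
\begin{equation*}
\norm{\w}_{H^2(G)} \le C\norm{\nabla\eta\cdot\u}_{H^1(G)} \le C\Big(\tfrac{1}{d}\norm{\nabla\u}_{L^2(A_2)} + \tfrac{1}{d^2}\norm{\u}_{L^2(A_2)}\Big).
\end{equation*}

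Setting $\hat\u := \tilde\u - \w$ and extending by zero outside $G$, the pair $(\hat\u, \tilde p - c)$, with $c$ chosen so that the pressure has zero mean on $\Omega$, solves a Stokes system on the convex polyhedron $\Omega$ with $L^2$ source and homogeneous boundary data. Applying \cref{eq:H2_estimate} to this system, absorbing the extra $\Delta\w$ contribution to the source via the previous step, and restricting back to $A_1$ where $\eta \equiv 1$ yields the estimate for $\norm{\u}_{H^2(A_1)} + \norm{\nabla p}_{L^2(A_1)}$. The leftover $\norm{p}_{L^2(A_1)}$ and the constant $c$ are handled by the trivial bound $\norm{p}_{L^2(A_1)} \le \norm{p}_{L^2(A_2)}$, with the factor $1/d$ absorbing a constant of the order of $\operatorname{diam}(\Omega)$.

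The main obstacle I anticipate is the uniformity of the constants in the Bogovskii estimate on $G$ (and, more mildly, in the pressure normalization) with respect to the position of $\tilde x$ --- in particular its proximity to edges and corners of $\Omega$ --- and to the radii $r, \tilde r$. Since the claimed inequality is scale invariant under $x \mapsto \lambda x$ and $G$ is always a convex subset of $\Omega$ whose John constant is controlled in terms of the geometry of $\Omega$ alone, a standard scaling/compactness argument provides the required uniform control.
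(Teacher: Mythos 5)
Your overall strategy (cutoff, Bogovskii correction of the divergence, then the global $H^2$ estimate on $\Omega$) is a legitimate alternative to the paper's argument, but as written it has one step that fails: the claim that after extending $\hat\u=\tilde\u-\w$ by zero outside $G=B_{\tilde r}(\tilde x)\cap\Omega$ the pair $(\hat\u,\tilde p-c)$ solves a Stokes system on $\Omega$ with an $L^2$ right-hand side. The zero extension of a function $\w\in H^2(G)\cap H^1_0(G)$ is in general only in $H^1_0(\Omega)$, not in $H^2(\Omega)$: unless $\partial_n\w$ vanishes on $\partial G\setminus\partial\Omega=\partial B_{\tilde r}(\tilde x)\cap\Omega$, the distribution $\Delta\w$ computed on $\Omega$ contains a surface measure on that interface carrying the jump of the normal derivative, so the source is not in $L^2(\Omega)$ and \cref{eq:H2_estimate} does not apply. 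This is repairable, but only by an ingredient you did not state: the Bogovskii solution built from a reference ball $B\Subset B_{\tilde r}(\tilde x)\cap\Omega$ is supported in the convex hull of $\operatorname{supp}(\nabla\eta\cdot\u)\cup B$, which is compactly contained in $B_{\tilde r}(\tilde x)$ because $\nabla\eta$ vanishes near $\partial B_{\tilde r}(\tilde x)$; with that choice $\w\equiv 0$ near the interface and the extension is harmless. You also tacitly use the second-order Bogovskii estimate, which requires the data to lie in $H^1_0(G)$ (true here, again because $\nabla\eta$ vanishes near $\partial B_{\tilde r}(\tilde x)$ and $\u$ vanishes on $\partial\Omega$), and a uniform star-shapedness constant for $G$; your appeal to scaling and compactness is plausible but should be made quantitative, e.g.\ via a uniform interior cone at every point of $\bar\Omega$ giving an inscribed ball of radius comparable to $\min(\tilde r,\operatorname{diam}\Omega)$.

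For comparison, the paper avoids the divergence correction altogether: it keeps the localized pair $(\omega\u,\omega p)$ with its nonzero divergence $\nabla\omega\cdot\u$ and invokes the $H^2\times H^1$ regularity theorem for the Stokes system with prescribed divergence vanishing on the boundary, applied directly on the convex domain $A_2$ (citing Kellogg--Osborn type results). That route is shorter but leans on a stronger black-box regularity statement on the variable subdomain $A_2$; yours reduces everything to the already-stated estimate \cref{eq:H2_estimate} on the fixed domain $\Omega$, at the price of the Bogovskii machinery and the support bookkeeping described above.
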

\begin{proof}
	Let $\omega \in C^{\infty}(\Omega)$ be a smooth cut-off function with $\omega = 1$ on $A_1$ and $\omega=0$ on $\Omega \backslash A_2$ such that
	\begin{equation}
	\abs{\nabla^k \omega}\sim \frac{1}{d^k} \quad \text{for } k=0,1,2. \label{eq:lemma_localh2_1}
	\end{equation}
	We consider $\tilde u = \omega \u$ and $\tilde p = \omega p$. Then, we get the following weak formulation for $\vec\varphi \in H^1_0(\Omega)^3$
	\begin{align}
	(\nabla \tilde u, \nabla \vec\varphi) &= (\nabla \omega \otimes \u + \omega \nabla \u,\nabla\vec\varphi)\\
	&= -(\nabla \cdot (\nabla \omega \otimes \u),\vec\varphi) + (\nabla \u, \nabla(\omega \vec\varphi)) - (\nabla \u, \nabla \omega \otimes \vec\varphi)\\
	&= -(\nabla \cdot (\nabla \omega \otimes \u),\vec\varphi) + (\f, \omega \vec\varphi) + (p,\nabla \cdot (\omega \vec\varphi)) - (\nabla \u, \nabla \omega \otimes \vec\varphi)\\
	&= -(\nabla \cdot (\nabla \omega \otimes \u),\vec\varphi) + (\f, \omega \vec\varphi) + (\omega p,\nabla \cdot \vec\varphi) + (\nabla \omega p,\vec\varphi ) - (\nabla \u \nabla \omega,  \vec\varphi),
	\end{align}
	where we used \cref{eq:def_stoke_1} and in addition we get	$\nabla \cdot \tilde u = \nabla \omega \cdot \u$.
	Thus, $\tilde u$ and $\tilde p$ solve the following boundary value problem in the weak sense
	\begin{subequations}
		\begin{alignat}{3}
		-\Delta \tilde u + \nabla \tilde p &= \f - \nabla \cdot (\nabla \omega \otimes \u) + \nabla \omega p - \nabla \u \nabla \omega
		&\text{ in } A_2, \label{eq:def_stoke_local_1} \\
		\nabla \cdot \tilde u &= \nabla \omega \cdot \u 
		&\text{ in } A_2, \label{eq:def_stoke_local_2}\\
		\tilde u &= \vec 0 
		&\text{ on } \partial A_2. \label{eq:def_stoke_local_3}
		\end{alignat}
	\end{subequations}
	By construction we have that $A_2$ is convex and $\nabla \omega \cdot \u$ vanishing on the boundary $\partial A_2$. Thus, according to  \cite[~Thm.~9.20]{MR977489} and the fact that $\nabla \cdot \tilde u$ is zero on $\partial A_2$, the $H^2$ regularity result  \cref{eq:H2_estimate} holds in this situation as well, and we obtain
	\begin{align}
	\norm{\tilde u}_{H^2(A_2)} + &\norm{\tilde p}_{H^1(A_2)} \\&\leq C\Big(\norm{\f}_{L^2(A_2)} + \norm{\nabla \omega \nabla \u}_{L^2(A_2)} +\norm{\nabla^2 \omega  \u}_{L^2(A_2)}+\norm{\nabla \omega p}_{L^2(A_2)}\Big)\\
	&\leq C\Big(\norm{\f}_{L^2(A_2)} + \frac{1}{d}\norm{\nabla \u}_{L^2(A_2)} +\frac{1}{d^2}\norm{\u}_{L^2(A_2)}+\frac{1}{d}\norm{p}_{L^2(A_2)}\Big),
	\end{align}
	where we used \cref{eq:lemma_localh2_1}. We get
	\begin{multline}
	\norm{\u}_{H^2(A_1)} + \norm{p}_{H^1(A_1)} = \norm{\tilde u}_{H^2(A_1)} + \norm{\tilde p}_{H^1(A_1)}
	\leq \norm{\tilde u}_{H^2(A_2)} + \norm{\tilde p}_{H^1(A_2)}\\
	\leq C\left(\norm{\f}_{L^2(A_2)} + \frac{1}{d}\norm{\nabla \u}_{L^2(A_2)} +\frac{1}{d^2}\norm{\u}_{L^2(A_2)}+\frac{1}{d}\norm{p}_{L^2(A_2)}\right).
	\end{multline}
\end{proof}
Using a covering argument (see \Cref{corollary:w1_localization} for details),  we may show the following corollary.
\begin{corollary}
	\label{corollary:localh2}
	Let $\Omega_1 \subset \Omega_2 \subset \Omega$ with $dist(\bar \Omega_1, \partial \Omega_2)\geq d$, then holds for $(\u,p)$ the solution to \cref{eq:def_stoke_1,eq:def_stoke_2,eq:def_stoke_3} that
	\begin{equation}
	\norm{\u}_{H^2(\Omega_1)} + \norm{p}_{H^1(\Omega_1)} \leq C\Big(\norm{\f}_{L^2(\Omega_2)} + \frac{1}{d}\norm{\nabla \u}_{L^2(\Omega_2)}+\frac{1}{d^2}\norm{ \u}_{L^2(\Omega_2)}+\frac{1}{d}\norm{p}_{L^2(\Omega_2)}\Big).
	\end{equation}
\end{corollary}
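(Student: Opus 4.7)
The plan is to reduce the corollary to \Cref{lemma:localh2} via a finite-overlap covering of $\bar{\Omega}_1$. First, I would cover $\bar{\Omega}_1$ by a finite collection of balls $\{B_{d/2}(\tilde x_i)\}_{i=1}^N$ with centers $\tilde x_i \in \bar{\Omega}_1$, chosen so that the enlarged balls $\{B_d(\tilde x_i)\}_{i=1}^N$ have bounded overlap: every point of $\R^3$ lies in at most $M$ of the $B_d(\tilde x_i)$, where $M$ depends only on the spatial dimension. Such a covering is standard; one way is to take $\tilde x_i$ to be the nodes of a cubic lattice of spacing comparable to $d$ that fall in $\bar{\Omega}_1$, followed by a Vitali-type selection if needed.

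Since $\tilde x_i \in \bar{\Omega}_1$ and $dist(\bar{\Omega}_1, \partial \Omega_2) \geq d$, the enlarged balls satisfy $B_d(\tilde x_i) \cap \Omega \subset \Omega_2$. Setting $A_1^i := B_{d/2}(\tilde x_i) \cap \Omega$ and $A_2^i := B_d(\tilde x_i) \cap \Omega$, an application of \Cref{lemma:localh2} to each pair (with difference of radii $d/2$, which is comparable to $d$) gives
\begin{equation*}
\norm{\u}_{H^2(A_1^i)} + \norm{p}_{H^1(A_1^i)} \leq C\Big(\norm{\f}_{L^2(A_2^i)} + \tfrac{1}{d}\norm{\nabla \u}_{L^2(A_2^i)} + \tfrac{1}{d^2}\norm{\u}_{L^2(A_2^i)} + \tfrac{1}{d}\norm{p}_{L^2(A_2^i)}\Big).
\end{equation*}
I would then square each estimate, sum over $i = 1,\ldots,N$, and use that the $A_1^i$ cover $\bar{\Omega}_1$ to bound the resulting left-hand side from below by $\norm{\u}_{H^2(\Omega_1)}^2 + \norm{p}_{H^1(\Omega_1)}^2$. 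The bounded-overlap property of the enlarged cover gives $\sum_i \norm{\,\cdot\,}_{L^2(A_2^i)}^2 \leq M\,\norm{\,\cdot\,}_{L^2(\Omega_2)}^2$ for each of the four norms on the right, and taking a square root produces the claimed estimate with a constant $C$ absorbing $M$.

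The only point that requires any care is verifying that the multiplicity $M$ of the enlarged cover is independent of $d$ and of the particular shapes of $\Omega_1$ and $\Omega_2$; this is routine for the cubic lattice construction in $\R^3$. Apart from this, the proof is a purely geometric bookkeeping exercise built on top of \Cref{lemma:localh2}, and I expect no further obstacles, which is consistent with the authors signaling that this is a covering argument of the kind also used later for \Cref{corollary:w1_localization}.
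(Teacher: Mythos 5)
Your proposal is correct and follows exactly the route the paper intends: the authors give no separate proof of this corollary but defer to the covering argument spelled out for \Cref{corollary:w1_localization}, which is precisely your finite-overlap ball covering with \Cref{lemma:localh2} applied to each pair $A_1^i\subset A_2^i$. Your adaptation of the summation step (squaring and using bounded overlap rather than crudely adding sup-norms) is the right modification for the $L^2$-based norms, and the inclusion $B_d(\tilde x_i)\cap\Omega\subset\Omega_2$ follows from $dist(\bar\Omega_1,\partial\Omega_2)\geq d$ just as in the paper's argument.
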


\subsubsection{Green's matrix estimate}
We also need estimates of the respective Green's matrix for the Stokes problem. For this, refer to \cite[Section 11.5]{MR2641539}. Let $\phi\in C^{\infty}(\bar\Omega)$ be vanishing in a neighborhood of the edges and
$\int_{\Omega} \phi (\x)d\x =1.$
The matrix $G(\x,\y) = (G_{i,j}(\x,\y))_{i,j=1,2,3,4}$ is the Green's matrix for problem \cref{eq:def_stoke_1,eq:def_stoke_2,eq:def_stoke_3} if the vector functions $\vec G_j = (G_{1,j},G_{2,j},G_{3,j})^T$ and $G_{4,j}$ are solutions of the problem
\begin{alignat}{3}
-\Delta_x \vec G_j(\x,\y) + \nabla_x G_{4,j}(\x,\y) &= \delta (\x-\y) (\delta_{1,j},\delta_{2,j},\delta_{3,j})^t &&\quad \text{for } \x, \y \in \Omega \\
-\nabla_x \cdot \vec G_j(\x,\y) &= (\delta (\x-\y) -\phi(\x))\delta_{4,j} &&\quad \text{for } \x, \y \in \Omega,\\
\vec G_j(\x,\y)&= \vec 0 &&\quad \text{for } \x \in \partial \Omega, \y \in \Omega
\end{alignat}
and  $G_{4,j}$ satisfies the condition
\begin{equation}
\int_{\Omega} \G_{4,j}(\x,\y) \phi(\x) d\x =0 \quad \text{for } \y \text{ in } \Omega, j = 1,2,3,4.
\end{equation}
For the existence and uniqueness of such a matrix, we again refer to \cite{MR2641539}.
If now $f\in H^{-1}(\Omega)^3$ and the uniquely determined solutions of the Stokes system given by $(\u,p)\in  H^{1}_0(\Omega)^3 \times L_2(\Omega)$ satisfy the condition
\begin{equation}
\int_{\Omega}p(\x)\phi(\x) d\x=0 \label{eq:cond_pressure}
\end{equation}
then the components of $(\u,p)$ admit the representations
\begin{equation}
\u_i(\x) = \int_{\Omega}\vec f(\vec{\xi})\cdot \G_i(\vec{\xi},\x)d\vec\xi , \quad i =1,2,3,\quad
p(\x) = \int_{\Omega} \vec f(\vec{\xi}) \cdot \G_4 (\vec{\xi},\x)d\vec\xi . \label{eq:greens_matrix_2}
\end{equation}
To apply this result to our case, we need to find a suitable $\bar \phi$ such that \cref{eq:cond_pressure} holds. We show this is possible for $p \in C^{0,\alpha}(\Omega)\cap L^2_0(\Omega)$. By \cite[Theorem 11.3.2]{MR2641539} this is fulfilled for data in $C^{-1,\alpha}(\Omega)$. For our use cases in later sections we consider at least continuous right-hand sides, so this is applicable.

Without loss of generality, we assume $p \neq 0$. Thus, since the mean value of $p$ is zero, there exist non-empty open sets $A,B \Subset \Omega$ such that
$p > 0$ on $A$ and
$p < 0$ on $B$.
We then can choose $\bar \phi$ such that
$\bar \phi = 0$ on $\Omega \backslash (A \cup B)$ and
$\bar \phi >0$ on $A$, $B$
and thus $\bar \phi$ vanishing close to the edges of $\Omega$.
Through suitable scaling on $A$ and $B$, we get
\begin{equation}
\int_{A}p(\x) \bar \phi(\x) d\x = - \int_{B}p(\x) \bar \phi(\x) d\x
\end{equation}
and hence we can conclude that \cref{eq:cond_pressure} holds for $\bar \phi(\x)$.
Finally, since by assumption $\bar\phi >0$, we normalize $\bar\phi$ with respect to the $L^1(\Omega)$ norm to complete the construction. This shows that we can apply the results for the Green's matrix to $(\u,p)$. Furthermore, we can also use the available results from \cite{MR2945141}.

We state estimates for the Green's matrix specific to convex polyhedral domains as it can be found in \cite[Theorem 11.5.5, Corollary 11.5.6]{MR2641539}.
\begin{proposition}
	\label{proposition:Green_matrix}
	Let $\Omega$ be a convex polyhedral type domain. Then, the elements of the matrix $G(\x,\vec\xi)$ satisfy the estimate
	\begin{equation}
	\abs{\partial^{\theta}_x\partial^{\beta}_{\xi} G_{i,j}(\x,\vec\xi)} \leq c \abs{\x-\vec\xi}^{-1-\delta_{i,4} - \delta_{j,4}-\abs{\theta}-\abs{\beta}}
	\end{equation}
	for $\abs{\theta}\leq 1- \delta_{i,4}$ and $\abs{\beta}\leq 1- \delta_{j,4}$.
	Furthermore, the following H\"older type estimate holds in this setting
	\begin{equation}
	\frac{\abs{\partial_{\xi}^{\theta}G_{i,j}(\x,\vec\xi)-\partial_{\xi}^{\theta}G_{i,j}(\y,\vec\xi)}}{\abs{\x-\y}^{\alpha}}
	\leq C\Big(\abs{\x-\vec\xi}^{-1-\alpha -\delta_{j,4}-\abs{\theta}} + \abs{\y-\vec\xi}^{-1-\alpha -\delta_{j,4}-\abs{\theta}}\Big).
	\end{equation}
\end{proposition}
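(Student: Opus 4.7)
The plan is to split each Green's matrix entry into the free-space (Stokeslet) contribution plus a smooth correction enforcing the homogeneous boundary condition, and then estimate the two pieces separately. The Oseen tensor on $\R^3$ is explicitly known: its velocity components decay like $|\x-\vec\xi|^{-1}$, its pressure components like $|\x-\vec\xi|^{-2}$, and every derivative gains one additional factor of $|\x-\vec\xi|^{-1}$. This already produces exactly the singular exponent $-1-\delta_{i,4}-\delta_{j,4}-|\theta|-|\beta|$ claimed in the proposition. The admissibility restrictions $|\theta|\leq 1-\delta_{i,4}$ and $|\beta|\leq 1-\delta_{j,4}$ are precisely what keeps each component either a velocity- or a pressure-type quantity, so that the Stokeslet calculation applies.

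For the correction $R_{i,j} = G_{i,j} - \Gamma_{i,j}$, which satisfies a homogeneous Stokes system with boundary data $-\Gamma_{i,j}|_{\partial \Omega}$, I would run a cut-off and rescaling argument. Fix $\vec\xi$ and set $d = |\x-\vec\xi|$. Apply \cref{corollary:localh2} (and its higher-order iterates) on the subdomain $B_{d/2}(\x)\cap\Omega$ with the larger shell $B_{3d/4}(\x)\cap\Omega$ as the outer set, then rescale to a domain of unit size. Because the free-space trace on $\partial\Omega \cap B_d(\vec\xi)$ is of order $d^{-1-\delta_{j,4}}$ and its derivatives gain factors of $d^{-1}$, Sobolev embedding on the rescaled problem turns the rescaled $H^k$ bound into a pointwise bound that absorbs precisely the right powers of $d$. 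This gives the claimed bound on the correction, and combined with the Stokeslet piece yields the first inequality.

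For the Hölder estimate I would distinguish two regimes. If $|\x-\y|\geq \tfrac12 \min(|\x-\vec\xi|,|\y-\vec\xi|)$, the inequality follows trivially from the pointwise bounds just established applied to each term in the numerator. In the opposite regime, parametrize the segment from $\x$ to $\y$, integrate the already-estimated $x$-derivative $\partial_x\partial_{\xi}^\theta G_{i,j}$ along it to get a Lipschitz-type bound with exponent $1$, and interpolate against the pointwise bound of exponent $0$ to extract the fractional exponent $\alpha \in (0,1)$.

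The main obstacle is behavior near the edges and vertices of $\partial\Omega$, where interior and flat-boundary Schauder theory break down. At these singular points one must pass to weighted Kondratiev-type regularity for the Stokes system, and show that no eigenvalues of the associated Mellin symbol fall in the critical strip that would spoil the stated pointwise exponents. This is exactly the place where the convexity hypothesis on $\Omega$ enters: for convex polyhedra the dihedral angles at edges lie in $(0,\pi)$ and the solid angles at vertices in $(0,2\pi)$, which is known (cf.\ the analysis underlying \cite{MR2641539}) to push the offending eigenvalues out of the critical strip and make the weighted regularity match the fundamental-solution rate. Carrying out this spectral verification uniformly over all edges and vertices of a general convex polyhedron is the technical heart of the proof and what forces us to invoke the polyhedral Green's-matrix machinery rather than reprove it here.
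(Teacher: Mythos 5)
The paper does not prove this proposition at all: it is imported verbatim from \cite[Theorem 11.5.5, Corollary 11.5.6]{MR2641539}, and the surrounding text only verifies that the normalization condition \cref{eq:cond_pressure} can be arranged so that the representation \cref{eq:greens_matrix_2} applies. Your sketch follows the standard route to such results (Stokeslet plus boundary correction, rescaled local estimates, Kondratiev/Mellin analysis at edges and vertices), and you correctly identify that the spectral verification near the edges and vertices is the actual content of the theorem --- but you then explicitly defer that step to the machinery of \cite{MR2641539}. So in substance your argument bottoms out in the same citation the paper relies on; it is a plausible roadmap rather than an independent proof, which is acceptable here since the paper itself treats the result as quoted background.

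Two caveats on the sketch itself. First, the Green's matrix here is not the plain Dirichlet Green's matrix: the fourth column solves $-\nabla_x\cdot\G_4 = \delta(\x-\y)-\phi(\x)$ and the pressure entries are normalized by $\int_\Omega G_{4,j}(\x,\y)\phi(\x)\,d\x=0$, so the ``free-space kernel plus smooth correction'' decomposition must also absorb the $\phi$-dependent (smooth, bounded) contributions; this is harmless but should be said. Second, your derivation of the H\"older estimate by integrating $\partial_x\partial_{\xi}^{\theta}G_{i,j}$ along the segment from $\x$ to $\y$ only works when that $x$-derivative is controlled by the first estimate, i.e.\ for $i\le 3$; for the pressure row $i=4$ no $x$-gradient bound is available (that is precisely why the H\"older estimate is stated as a separate assertion in \cite{MR2641539}), so for those entries the fractional smoothness has to come directly from the weighted regularity theory, not from interpolation against a Lipschitz bound.
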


\subsection{Finite element approximation}
\label{ss:fem_approximation}
Let $\Th$ be a regular, quasi-uniform family of triangulations of $\bar \Omega$, made of closed tetrahedra $T$, where $h$ is the global mesh-size and $L^2_0(\Omega)$ the space of $L^2(\Omega)$ functions with zero-mean value. Let $\V_h \subset H^1_0(\Omega)^3$ and $M_h \subset L^2_0(\Omega)$ be a pair of finite element spaces satisfying a uniform discrete inf-sup condition,
\begin{equation}
\sup_{\v_h \in \V_h}\frac{(q_h,\nabla \cdot \v_h) }{\twonorm{\nabla \v_h}} \geq \beta \twonorm{q_h} \quad \forall q_h \in M_h, \label{eq:discrete_infsup}
\end{equation}
with a constant $\tilde\beta >0$ independent of $h$. The respective discrete solution associated with the velocity-pressure pair $(\u,p)\in H^1_0(\Omega)^3 \times L^2_0(\Omega)$ is defined as the pair $(\u_h,p_h) \in \V_h \times M_h$ that solves the weak form of \cref{eq:def_stoke_1,eq:def_stoke_2,eq:def_stoke_3} given by the bilinear form $a(\cdot,\cdot)$ which is defined as
\begin{equation}
a((\u_h,p_h),(\v_h,q_h)) = (\nabla \u_h, \nabla \v_h) - (p_h, \nabla \cdot \v_h) + (\nabla \cdot \u_h, q_h) \label{eq:def_stoke_fem}
\end{equation}
and the equation
\begin{equation}
a((\u_h,p_h),(\v_h,q_h)) = (\vec{f}, \v_h) \quad \forall (\v_h,q_h) \in \V_h \times M_h. \label{eq:def_stoke_fem_equation}
\end{equation}

\subsection{Assumptions}
\label{ss:interpolation_assumptions}
Next, we make assumptions on the finite element spaces. We assume, there exist approximation operators $P_h$ and $r_h$ as in \cite{MR2945141}, i.e. $P_h$ and $r_h$ fulfill the following properties.  
Let $Q\subset Q_d \subset \Omega$, with $d \geq \bar{\kappa}h$, for some fixed $\bar\kappa$ sufficiently large and $Q_d = \{\x \in \Omega : dist(\x,Q)\leq d \}$.
For $P_h \in \mathcal{L}(H^1_0(\Omega)^3;V_h)$ and $r_h \in \mathcal{L}(L^2(\Omega);\bar M_h)$ with $\bar M_h$ corresponding to $M_h$ without the zero-mean value constraint, we assume the following assumptions hold.
\begin{assumption}[Stability of $P_h$ in $H^1(\Omega)^3$]
	\label{assumption:stability}
	There exists a constant $C$ independent of $h$ such that
	\begin{equation}
	\twonorm{\nabla P_h (\v)} \leq C \twonorm{\nabla \v}, \quad \forall \v \in H^1_0(\Omega)^3.
	\end{equation}
\end{assumption}

\begin{assumption}[Preservation of discrete divergence for $P_h$]
	\label{assumption:divergence}
	It holds
	\begin{equation}
	(\nabla \cdot (\v - P_h(\v)),q_h) = 0, \quad \forall q_h \in \bar M_h, \quad \forall \v \in H^1_0(\Omega)^3.
	\end{equation}
\end{assumption}

\begin{assumption}[Inverse Inequality]
	\label{assumption:inverse_inequality}
	There is a constant $C$ independent of $h$ such that
	\begin{equation}
	\norm{\v_h}_{W^{1,p}(Q)} \leq Ch^{-1}\norm{\v_h}_{L^{p}(Q_d)} \quad \forall \v_h\in\V_h, 1\leq p \leq \infty.
	\end{equation}
\end{assumption}

\begin{assumption}[$L^2$ approximation]
	\label{assumption:approximation}
	For any $\v \in H^2(\Omega)^3$ and any $q\in H^1(\Omega)$ exists $C$ independent of $h$, $\v$ and $q$ such that
	\begin{align}
	\norm{P_h(\v) - \v}_{L^2(Q)} + h \norm{\nabla(P_h(\v)-\v)}_{L^2(Q)} &\leq C h^2\norm{\nabla^2 \v}_{L^2(Q_d)},\\
	\norm{r_h(q) - q}_{L^2(Q)} &\leq C h \norm{\nabla q}_{L^2(Q_d)}.
	\end{align}
\end{assumption}
In the following, $\vec e_i$ denotes the $i$-th standard basis vector in $\R^3$. 
\begin{assumption}[Approximation in the H\"older spaces]
	\label{assumption:approximation_hoelder}
	\\ For $\v \in \big(C^{1,\alpha}(\Omega)\cap H^1_0(\Omega)\big)^3$ and $q \in C^{0,\alpha}(\Omega)$, it holds
	\begin{align}
	\norm{\nabla (P_h(\v)-\v)}_{L^{\infty}(Q)} &\leq Ch^{\alpha} \norm{\v}_{C^{1,\alpha}(Q_d)},\\
	\norm{r_h(q) - q}_{L^{\infty}(Q)} &\leq Ch^{\alpha}\norm{q}_{C^{0,\alpha}(Q_d)},
	\end{align}
	where
	\begin{equation}
	\norm{\v}_{C^{1+\alpha}(Q)}= \norm{\v}_{C^1(Q)}+ \sup_{\substack{\x,\y \in Q\\ i\in \{1,2,3\}}}\frac{\abs{\vec e_i\cdot\nabla(\v(\x)-\v(\y))}}{\abs{\x-\y}^{\alpha}}.
	\end{equation}
\end{assumption}

\begin{assumption}[Super-Approximation I]
	\label{assumption:superapproxmation_dyadic}
	Let $\v_h \in \V_h$ and $\omega \in C^{\infty}_0(Q_d)$ a smooth cut-off function such that $\omega \equiv 1$ on $Q$ and
	\begin{equation}
	\abs{\nabla^s\omega} \leq C d^{-s}, \quad s=0,1,
	\end{equation}
	where $Q_d=\{\vec{x}\in \Omega:\ dist(\vec{x},\partial Q)\geq d\}$.
	We assume 
	\begin{equation}
	\norm{\nabla(\omega^2 \v_h - P_h(\omega^2\v_h))}_{L^2(Q)}\leq C d^{-1}\norm{\v_h}_{L^2(Q_d)}.
	\end{equation}
	For $q_h \in \bar M_h$, we assume
	\begin{equation}
	\norm{\omega^2 q_h -  r_h(\omega^2q_h)}_{L^2(Q)}\leq C hd^{-1}\norm{q_h}_{L^2(Q_d)}.
	\end{equation}
\end{assumption}

One common example of a finite element space satisfying the above assumptions are the Taylor-Hood finite elements of order greater or equal than three. For more details on these spaces and the respective approximation operators, we refer to \cite{MR1342712,MR2121575,MR3422453}.

\begin{remark}
	Here we restrict ourselves to Taylor-Hood finite element spaces since in the following arguments we use results for finite element approximations of elliptic problems. These results are available for the usual space of Lagrange finite elements and can possibly be extended to other elements used for the Stokes problem, like e.g. the ``mini" element, which also fulfills the assumptions above.
\end{remark}

Next, we state a well-known energy error estimate for an approximation of the Stokes system. For details on the proof, see e.g. \cite[Proposition 4.14]{MR2050138}.
\begin{proposition}
	\label{proposition:fem_convergence}
	Let $(\u,p)$ solve \cref{eq:def_stoke_1,eq:def_stoke_2,eq:def_stoke_3} and $(\u_h,p_h)$ be its finite element approximation defined by \cref{eq:def_stoke_fem_equation}. Under the assumptions above, there exists a constant $C$ independent of $h$ such that,
	\begin{equation}
	\norm{\u-\u_h}_{H^1(\Omega)} + \norm{p-p_h}_{L^2(\Omega)} 
	\leq C \min_{(\v_h,q_h) \in \V_h\times M_h}\left(\norm{\u-\v_h}_{H^1(\Omega)} + \norm{p-q_h}_{L^2(\Omega)}\right).
	\end{equation}
\end{proposition}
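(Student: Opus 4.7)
The plan is to derive the estimate as a classical consequence of Brezzi's theory for mixed problems: Galerkin orthogonality combined with a discrete inf-sup condition for the full bilinear form $a(\cdot,\cdot)$ on $\V_h\times M_h$ yields a quasi-optimal bound, and the triangle inequality then produces the stated best-approximation estimate. There is no real obstacle; the only non-trivial ingredient is the combined inf-sup on the whole saddle-point form, invoked as a black box.

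First, subtracting \cref{eq:def_stoke_fem_equation} from the continuous weak formulation satisfied by $(\u,p)$ gives the Galerkin orthogonality
\begin{equation*}
a((\u-\u_h,p-p_h),(\v_h,q_h))=0 \quad \forall (\v_h,q_h)\in \V_h\times M_h.
\end{equation*}
For arbitrary $(\w_h,s_h)\in\V_h\times M_h$, set $\vec\eta_h=\u_h-\w_h\in\V_h$ and $\xi_h=p_h-s_h\in M_h$; then
\begin{equation*}
a((\vec\eta_h,\xi_h),(\v_h,q_h))=a((\u-\w_h,p-s_h),(\v_h,q_h)) \quad \forall (\v_h,q_h)\in\V_h\times M_h,
\end{equation*}
and Cauchy--Schwarz immediately yields the continuity bound $|a((\vec\phi,r),(\v,q))|\le C(\twonorm{\nabla\vec\phi}+\twonorm{r})(\twonorm{\nabla\v}+\twonorm{q})$.

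The decisive ingredient is the combined discrete inf-sup inequality for $a$ on $\V_h\times M_h$: there exists $\gamma>0$ independent of $h$ such that
\begin{equation*}
\twonorm{\nabla\vec\eta_h}+\twonorm{\xi_h} \le \frac{1}{\gamma}\sup_{(\v_h,q_h)\neq 0}\frac{a((\vec\eta_h,\xi_h),(\v_h,q_h))}{\twonorm{\nabla\v_h}+\twonorm{q_h}},
\end{equation*}
which follows from assumption \cref{eq:discrete_infsup} together with the coercivity of $(\nabla\cdot,\nabla\cdot)$ on $\V_h\subset H^1_0(\Omega)^3$ via the standard Brezzi splitting into discretely divergence-free velocities and a pressure-controlling complement, see, e.g., \cite{MR2050138}. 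Combining this with the rewritten orthogonality and the continuity bound yields $\twonorm{\nabla\vec\eta_h}+\twonorm{\xi_h}\le \tfrac{C}{\gamma}(\twonorm{\nabla(\u-\w_h)}+\twonorm{p-s_h})$, and the triangle inequality followed by minimization over $(\w_h,s_h)\in\V_h\times M_h$ (using Poincar\'e's inequality to pass from the $L^2$ norm of the gradient to the full $H^1$ norm on $H^1_0(\Omega)^3$) gives the claim.
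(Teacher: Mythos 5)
Your argument is correct and is essentially the same as the one the paper relies on: the paper gives no proof of this proposition but cites \cite[Proposition 4.14]{MR2050138}, where precisely this Brezzi-type reasoning (Galerkin orthogonality, continuity, and the combined discrete inf-sup for the saddle-point form deduced from \cref{eq:discrete_infsup} and coercivity of the velocity block) is carried out. Nothing further is needed.
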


\subsection{Local energy estimates}
An important tool in our analysis are the
local energy estimates from \cite[Thm.~2]{MR2945141}.
\begin{proposition}\label{proposition:local_energy_estimate}
	Suppose $(\v,q)\in H^1_0(\Omega)^3\times L^2(\Omega)$ and $(\v_h,q_h) \in \V_h \times M_h$ satisfy 
	\begin{equation}
	a((\v-\v_h,q-q_h),(\vchi,w)) = 0 \quad \forall (\vchi,w) \in \V_h \times M_h
	\end{equation}
	for the bilinear form $a(\cdot,\cdot)$ given in \cref{eq:def_stoke_fem}. Then, there exists a constant $C$ such that for every pair of sets $A_1 \subset A_2 \subset \Omega$ such that $dist(\bar A_1, \partial A_2 \backslash \partial \Omega) \geq d \geq \bar\kappa h$ (for some fixed constant $\bar\kappa$ sufficiently large) the following bound holds for every $\varepsilon>0$
	\begin{multline}
	\norm{\nabla(\v-\v_h)}_{L^2(A_1)} \leq C\norm{\nabla (\v - P_h (\v))}_{L^2(A_2)}+ C\norm{q- r_h (q)}_{L^2(A_2)} \\\nonumber
	+\frac{C}{\varepsilon d}\norm{\v- P_h(\v)}_{L^2(A_2)}
	+ \varepsilon \norm{\nabla(\v-\v_h)}_{L^2(A_2)} + \frac{C}{\varepsilon d}\norm{\v -\v_h}_{L^2(A_2)}.
	\end{multline}
\end{proposition}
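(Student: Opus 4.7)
The plan is to adapt the Schatz--Wahlbin local energy technique to the Stokes saddle-point structure, leveraging the super-approximation and approximation assumptions from \cref{ss:interpolation_assumptions}. Fix a smooth cut-off $\omega\in C^\infty_0(\Omega)$ with $\omega\equiv 1$ on $A_1$, support satisfying $\operatorname{dist}(\operatorname{supp}\omega,\partial A_2\setminus\partial\Omega)\gtrsim d$, and $\abs{\nabla^s\omega}\leq Cd^{-s}$ for $s=0,1$. Introduce $\xi:=\v_h-P_h\v\in\V_h$ and $\tau:=q_h-r_h q\in M_h$, so that $\v-\v_h=(\v-P_h\v)-\xi$ and $q-q_h=(q-r_h q)-\tau$. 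Since $\norm{\nabla(\v-P_h\v)}_{L^2(A_2)}$ and $\norm{q-r_h q}_{L^2(A_2)}$ already appear on the right of the claim, the triangle inequality reduces matters to controlling $\norm{\nabla\xi}_{L^2(A_1)}$. Galerkin orthogonality reads
\begin{equation*}
a\bigl((\xi,\tau),(\vchi,w)\bigr)=a\bigl((\v-P_h\v,q-r_h q),(\vchi,w)\bigr)\quad\forall(\vchi,w)\in\V_h\times M_h.
\end{equation*}

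I would first test with $(\vchi,w)=(P_h(\omega^2\xi),0)$. Writing
\begin{equation*}
(\nabla\xi,\nabla(\omega^2\xi))=\norm{\omega\nabla\xi}_{L^2(\Omega)}^2+2(\omega\nabla\xi,\xi\otimes\nabla\omega),
\end{equation*}
and invoking \cref{assumption:superapproxmation_dyadic} for $\omega^2\xi-P_h(\omega^2\xi)$ extracts the principal quantity $\norm{\omega\nabla\xi}_{L^2(\Omega)}^2$ modulo commutators bounded by $Cd^{-1}\norm{\xi}_{L^2(A_2)}\norm{\omega\nabla\xi}_{L^2(\Omega)}$. The pressure coupling $(\tau,\nabla\cdot P_h(\omega^2\xi))$ becomes $(\tau,\nabla\cdot(\omega^2\xi))$ by \cref{assumption:divergence}, and then splits as $(\omega^2\tau,\nabla\cdot\xi)+2(\omega\tau,\xi\cdot\nabla\omega)$. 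The first summand is killed by inserting $r_h(\omega^2\tau)$ and using $(\nabla\cdot\xi,r_h(\omega^2\tau))=0$, another consequence of \cref{assumption:divergence} applied separately to $\v$ and $\v_h$, leaving only a super-approximation residue $\lesssim hd^{-1}\norm{\tau}_{L^2(A_2)}\norm{\omega\nabla\xi}_{L^2(\Omega)}$.

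To control the residual $\norm{\tau}_{L^2(A_2)}$, I would invoke the discrete inf-sup \cref{eq:discrete_infsup} through a Bogovski lift $\tilde\v\in H^1_0(\Omega)^3$ of $\nabla\cdot\tilde\v=\omega^2\tau-\bar c$, with $\bar c$ chosen to enforce zero mean and with $\norm{\nabla\tilde\v}_{L^2(\Omega)}\leq C\norm{\omega\tau}_{L^2(\Omega)}$, and test Galerkin orthogonality with $(P_h\tilde\v,0)$. Using \cref{assumption:divergence} again to identify $(\tau,\nabla\cdot P_h\tilde\v)=\norm{\omega\tau}_{L^2(\Omega)}^2$, one obtains an inequality of the form
\begin{equation*}
\norm{\omega\tau}_{L^2(\Omega)}\leq C\bigl(\norm{\nabla\xi}_{L^2(\Omega)}+\norm{\nabla(\v-P_h\v)}_{L^2(\Omega)}+\norm{q-r_h q}_{L^2(\Omega)}\bigr).
\end{equation*}
Combining this with the velocity estimate, applying Young's inequality with parameter $\varepsilon$ to absorb $\norm{\omega\nabla\xi}_{L^2(\Omega)}^2$ into the left, and using $\norm{\xi}_{L^2(A_2)}\leq\norm{\v-\v_h}_{L^2(A_2)}+\norm{\v-P_h\v}_{L^2(A_2)}$, yields an inequality of the advertised shape.

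The main obstacle is the \emph{non-local} nature of the discrete inf-sup: the Bogovski lift $\tilde\v$ and hence $P_h\tilde\v$ are defined on all of $\Omega$, so pressure errors outside $A_2$ leak into the bound through the term $(\nabla\xi,\nabla P_h\tilde\v)$ in global norm. This is precisely the origin of the absorbable $\varepsilon\norm{\nabla(\v-\v_h)}_{L^2(A_2)}$ term in the statement; it is handled by iterating the whole argument on a chain of concentric dyadic shells between $A_1$ and $A_2$, at each step trading the outer-shell contribution for an $\varepsilon$ factor inherited from Young's inequality, exactly as in the proof given in \cite{MR2945141}.
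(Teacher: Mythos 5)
The paper does not actually prove this proposition: it is quoted verbatim from \cite[Thm.~2]{MR2945141}, so the only meaningful comparison is with the proof given in that reference. Your sketch follows the same Schatz--Wahlbin-type strategy that underlies it (cut-off function, super-approximation, divergence preservation, a discrete inf-sup argument for the pressure, iteration over nested sets), and the velocity part is sound: testing with $(P_h(\omega^2\xi),0)$, extracting $\norm{\omega\nabla\xi}^2_{L^2(\Omega)}$ up to commutators of size $d^{-1}\norm{\xi}_{L^2(A_2)}$, and killing $(\omega^2\tau,\nabla\cdot\xi)$ by inserting $r_h(\omega^2\tau)$ --- though note that $(\nabla\cdot\xi,r_h(\omega^2\tau))=0$ needs the Galerkin orthogonality for the $\v_h$ part in addition to \Cref{assumption:divergence}, which only concerns $P_h$.

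The genuine gap is in the pressure step. A global Bogovskii lift $\tilde{\v}\in H^1_0(\Omega)^3$ with $\nabla\cdot\tilde{\v}=\omega^2\tau-\bar c$ yields, after testing with $(P_h\tilde{\v},0)$, the bound $\norm{\omega\tau}_{L^2(\Omega)}\le C\big(\norm{\nabla\xi}_{L^2(\Omega)}+\norm{\nabla(\v-P_h(\v))}_{L^2(\Omega)}+\norm{q-r_h(q)}_{L^2(\Omega)}\big)$ with norms over all of $\Omega$, since $P_h\tilde{\v}$ has global support. These global contributions cannot be removed by iterating over shells between $A_1$ and $A_2$: the iteration only trades a quantity on one intermediate set for an $\varepsilon$-weighted quantity on a slightly larger intermediate set, all contained in $A_2$; it has no mechanism for shrinking $\Omega$ down to $A_2$. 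Your assertion that the global leakage is ``precisely the origin'' of the $\varepsilon\norm{\nabla(\v-\v_h)}_{L^2(A_2)}$ term is therefore incorrect --- that term lives on $A_2$, and a truly global term on the right would contradict the statement, which contains no global norms. The repair, as in \cite{MR2945141}, is a \emph{local} right inverse of the divergence: take $\tilde{\v}\in H^1_0(A)^3$ on a subdomain $A$ with $\mathrm{supp}\,\omega\subset A\Subset A_2$, satisfying $\nabla\cdot\tilde{\v}=\omega^2\tau-\bar c_A$ and $\norm{\nabla\tilde{\v}}_{L^2(A)}\le C\norm{\omega\tau}_{L^2(A)}$ uniformly over the sets considered, extend by zero, and use the locality of $P_h$ (implicit in the $Q\subset Q_d$ structure of \cref{assumption:approximation,assumption:superapproxmation_dyadic}) so that $P_h\tilde{\v}$ remains supported in $A_2$; the mean-value correction $\bar c_A$ must then be handled using $(\nabla\cdot(\v-\v_h),1)=0$ and the zero-mean structure of $\tau$. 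With that modification your outline matches the cited proof.
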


\subsection{Main results}
In the following statements, the constant $C$ is independent of $\u$, $p$ and $h$, but may depend on the parameter $\alpha$ related to the largest interior angle of $\partial \Omega$.
We start with the $W^{1,\infty}$ error estimates. 
The global stability result 
\begin{equation}\label{eq: Global W1infty}
\norm{\nabla \u_h}_{L^{\infty}(\Om)} + \norm{p_h}_{L^{\infty}(\Om)} \le C\left(\norm{\nabla\u}_{L^{\infty}(\Om)} + \norm{p}_{L^{\infty}(\Om)} \right),
\end{equation}
on convex polyhedral domains was established in \cite{MR2945141} (see also \cite{MR3422453}). Here, we establish a localized version of it.
In the our results $B_r(\xx)$ denotes  a ball of radius $r$ centered at $\xx \in \Omega$.
\begin{theorem}[Interior $W^{1,\infty}$ estimate for the velocity and $L^{\infty}$ estimate for the pressure]
	\label{theorem:W1infty_localization}
	Let the assumptions of \Cref{ss:fem_approximation} and \Cref{ss:interpolation_assumptions} hold. Put $D_1= B_r(\xx)\cap \Omega$, $D_2= B_{\rr}(\xx)\cap \Omega$, $\rr>r>\bar \kappa h$ (with $\bar \kappa$ large enough), $d=\rr-r\geq \bar\kappa h$. If $(\u,p)\in (W^{1,\infty}(D_2)^3 \times L^{\infty}(D_2))\cap (H^1_0(\Omega)^3 \times L^2_0(\Omega))$ is the solution to \cref{eq:def_stoke_1,eq:def_stoke_2,eq:def_stoke_3}, and $(\u_h,p_h)$ is the solution to \cref{eq:def_stoke_fem_equation}, then 
	\begin{multline}
	\norm{\nabla \u_h}_{L^{\infty}(D_1)} + \norm{p_h}_{L^{\infty}(D_1)}
	\\ \leq C\left(\norm{\nabla\u}_{L^{\infty}(D_2)} + \norm{p}_{L^{\infty}(D_2)} \right)
	+C_d\Big(\twonorm{\nabla\u} + \twonorm{p}\Big). \nonumber
	\end{multline}
	Here, the constant $C_d$ depends on the distance of $B_{r}(\xx)$ from $\partial B_{\rr}(\xx)$.
\end{theorem}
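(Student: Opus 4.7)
The approach is to localize the duality proof of the global $W^{1,\infty}$ stability result \cref{eq: Global W1infty} from \cite{MR2945141} via a Schatz--Wahlbin type splitting. Fix $\x_0 \in \bar D_1$ and unit vectors $\vec{e}_i, \vec{e}_k$ at which either $\abs{\vec{e}_k \cdot \partial_i \u_h(\x_0)}$ or $\abs{p_h(\x_0)}$ essentially realizes the respective supremum on $D_1$. To bound $\vec{e}_k \cdot \partial_i \u_h(\x_0)$, introduce a mollified Dirac $\tilde\delta_h$ concentrated in a mesh cell around $\x_0$ that reproduces the functional on $\V_h$, the regularized Stokes Green's pair $(\vec g, \pi)$ solving a problem with right-hand side $\partial_i \tilde\delta_h \vec{e}_k$ in the momentum equation, and its Galerkin approximation $(\vec g_h, \pi_h) \in \V_h \times M_h$. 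An analogous Green's pair with $\tilde\delta_h - \bar\phi$ appearing in the continuity equation handles $p_h(\x_0)$. Galerkin orthogonality combined with $\nabla \cdot \u = 0$ then yields the representation
\[
\vec{e}_k \cdot \partial_i \u_h(\x_0) \approx a((\u,p),(\vec g_h, \pi_h)) = (\nabla \u, \nabla \vec g_h) - (p, \nabla \cdot \vec g_h),
\]
and analogously for $p_h(\x_0)$.

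Next I would split each inner product into contributions on $D_2$ and on $\Omega \setminus D_2$. On $D_2$ apply the $L^\infty$--$L^1$ pairing to bound the contribution by $\bigl(\norm{\nabla \u}_{L^\infty(D_2)} + \norm{p}_{L^\infty(D_2)}\bigr)$ times global $L^1$ norms of $\nabla \vec g_h$ and $\nabla \cdot \vec g_h$; the requisite discrete $L^1$ bounds for the regularized Green's function are the analogs of the continuous bounds encoded in \Cref{proposition:Green_matrix} and are essentially what is established in \cite{MR2945141}. On $\Omega \setminus D_2$ apply Cauchy--Schwarz to obtain a bound of the form $\bigl(\twonorm{\nabla \u} + \twonorm{p}\bigr)\norm{(\nabla \vec g_h, \pi_h)}_{L^2(\Omega \setminus D_2)}$, so that the constant $C_d$ in the statement is precisely this latter quantity.

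The key obstacle is therefore a quantitative decay estimate for the discrete Green's function away from $\x_0$, namely $\norm{(\nabla \vec g_h, \pi_h)}_{L^2(\Omega \setminus D_2)} \le C_d$. The continuous analog is immediate from \Cref{proposition:Green_matrix}, which yields a pointwise bound like $\abs{\nabla \vec g(\x)} + \abs{\pi(\x)} \le C\abs{\x-\x_0}^{-3}$ away from $\x_0$, so that the corresponding $L^2$ norm on $\Omega \setminus D_2$ is controlled by a negative power of $d$. To transfer this decay to the discrete object I would apply the local energy estimate of \Cref{proposition:local_energy_estimate} to the Galerkin error $(\vec g - \vec g_h, \pi - \pi_h)$ on dyadic annuli $\{2^j d \le \abs{\x - \x_0} \le 2^{j+1}d\}$, combine with the localized $H^2$ stability of \Cref{corollary:localh2} for $(\vec g, \pi)$ on each annulus, and sum the resulting geometric series; this dyadic Schatz--Wahlbin step is the technical heart of the proof and is presumably the content of \Cref{section:duality_interpolation}. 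Taking a supremum of $\abs{\nabla \u_h(\x_0)}$ and $\abs{p_h(\x_0)}$ over $\x_0 \in \bar D_1$ then yields the claimed estimate.
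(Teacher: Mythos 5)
Your overall architecture (regularized Green's function, near/far splitting at $D_2$, dyadic local-energy argument for the far-field decay) matches the paper's, and your treatment of the far field is essentially equivalent to the paper's weighted estimate $\twonorm{\sigma^{3/2}\nabla(\g_1-\g_{1,h})}\leq C$ combined with $\sigma^{-3/2}\lesssim d^{-3/2}$ on $\Omega\setminus D_2$. However, your near-field step contains a genuine gap. You propose to bound $(\nabla\u,\nabla\g_{1,h})_{D_2}$ by $\norm{\nabla\u}_{L^{\infty}(D_2)}\onenorm{\nabla\g_{1,h}}$, asserting that the $L^1$ bound for the \emph{discrete Green's function itself} is "what is established in \cite{MR2945141}". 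It is not: what is bounded by a constant there is the Galerkin \emph{error}, $\onenorm{\nabla(\g_1-\g_{1,h})}\leq C$. The function $\nabla\g_1$ itself decays like $\abs{\x-\x_0}^{-3}$ (Proposition 2.4 with $\abs{\theta}=\abs{\beta}=1$), so $\onenorm{\nabla\g_1}\sim\lnhh$ and hence $\onenorm{\nabla\g_{1,h}}\sim\lnhh$ as well. Your route therefore yields $C\lnhh\norm{\nabla\u}_{L^{\infty}(D_2)}$, which is weaker than the stated log-free bound. The paper avoids this by never estimating the continuous Green's function term by H\"older: it writes $(\nabla\u,\nabla\g_{1,h})=(\nabla\u,\nabla(\g_{1,h}-\g_1))+(\nabla\u,\nabla\g_1)$ and evaluates the second term \emph{exactly} via the PDE, $(\nabla\u,\nabla\g_1)=(\u,\partial_{x_j}\delta_h\vec e_i)=-(\partial_{x_j}\u,\delta_h\vec e_i)\leq C\norm{\nabla\u}_{L^{\infty}(T_{\x_0})}$, so that only the well-behaved error $\g_{1,h}-\g_1$ enters the $L^1$/weighted-$L^2$ pairings. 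You need this (or an equivalent) step to get the claimed constant.

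The pressure half of the theorem is also essentially unaddressed. Beyond naming the Green's pair $(\G,\Lambda)$ with $\nabla\cdot\G=\delta_h-\phi$, the paper's argument requires: (i) handling the mean-value correction $(p_h,\phi)=(p_h-p,\phi)+(p,\phi)$ via the energy estimate, which is the source of the $C\big(\twonorm{\nabla\u}+\twonorm{p}\big)$ term; (ii) $L^1$ and weighted $L^2$ bounds for the \emph{interpolation} errors $P_h(\G)-\G$ and $r_h(\Lambda)-\Lambda$ (not Galerkin errors); and, crucially, (iii) the already-proved interior $W^{1,\infty}$ velocity bound applied on an intermediate domain $D^*$ with $r<r^*<\rr$, since the term $(\Lambda-r_h(\Lambda),\nabla\cdot(\u-\u_h))$ produces $\norm{\nabla\u_h}_{L^{\infty}(D^*)}$ on the right-hand side. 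Point (iii) is a structural dependence (pressure estimate feeds off the velocity estimate on a slightly larger set) that your sketch does not capture.
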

Next we state similar results for the velocity in $L^\infty$ norm. 
\begin{theorem}[Global $L^{\infty}$ estimate for the  velocity]
	\label{theorem:Linfty_nonzerodivergence}
	Under the assumptions of \Cref{ss:fem_approximation} and \Cref{ss:interpolation_assumptions}, for $(\u,p)\in (L^{\infty}(\Omega)^3 \times L^{\infty}(\Omega))\cap (H^1_0(\Omega)^3 \times L^2_0(\Omega))$  the solution to \cref{eq:def_stoke_1,eq:def_stoke_2,eq:def_stoke_3} and $(\u_h,p_h)$ the solution to \cref{eq:def_stoke_fem_equation}, it holds
	\begin{equation}
	\inftynorm{\u_h} \leq C\lnhh\Big(\lnhh\inftynorm{\u}+ h\inftynorm{p}\Big).
	\end{equation}
\end{theorem}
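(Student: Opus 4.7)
My plan is a duality argument with the regularized Stokes adjoint Green's function, in which the scalar Laplacian Ritz projection $R_h$ of \cite{MR3470741} is used as an intermediate object to trade derivatives of $\u$ for pointwise values and to avoid face jumps of $\g_h$. First I fix $x_0 \in \Om$ where $\inftynorm{\u_h}$ is attained up to $\varepsilon$ and a direction $\vec e_i$, and introduce a smooth regularized delta $\tilde\delta = \tilde\delta_{x_0,h}$ supported in a ball of radius of order $h$ about $x_0$, with $\norm{\tilde\delta}_{L^1(\Om)} \le C$ and the reproducing property $(\tilde\delta, v_h) = v_h(x_0)$ for $v_h$ in the scalar Lagrange space underlying $\V_h$. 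I then let $(\g, \lambda) \in H^1_0(\Om)^3 \times L^2_0(\Om)$ solve the Stokes adjoint problem with right-hand side $\tilde\delta\vec e_i$, and let $(\g_h, \lambda_h) \in \V_h \times M_h$ be its Galerkin approximation.

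Using the reproducing property of $\tilde\delta$, the discrete forward Stokes equation tested with $(\g_h, \lambda_h)$, and the $H^{-1}$--$H^1_0$ duality (valid since $\g_h|_{\partial\Om} = 0$), I obtain
\begin{equation*}
\u_h(x_0)\cdot\vec e_i = (\tilde\delta\vec e_i, \u_h) = (\f, \g_h) = (\nabla\u, \nabla\g_h) - (p, \nabla\cdot\g_h).
\end{equation*}
I split $\u = R_h\u + (\u - R_h\u)$ with $R_h$ the scalar Laplacian Ritz projection applied component-wise. Since $\g_h \in \V_h$ and $R_h$ is $H^1_0$-orthogonal, $(\nabla(\u - R_h\u), \nabla\g_h) = 0$, so $(\nabla\u, \nabla\g_h) = (\nabla R_h\u, \nabla\g_h)$. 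Testing the discrete adjoint equation $a((\vec\chi_h, w_h), (\g_h, \lambda_h)) = (\tilde\delta\vec e_i, \vec\chi_h)$ with $(\vec\chi_h, w_h) = (R_h\u, 0)$ produces
\begin{equation*}
(\nabla R_h\u, \nabla\g_h) = (\tilde\delta\vec e_i, R_h\u) - (\nabla\cdot R_h\u, \lambda_h).
\end{equation*}
The first contribution is at most $\inftynorm{R_h\u} \le C\lnhh \inftynorm{\u}$ by the Ritz stability of \cite{MR3470741}. For the second I use $\nabla\cdot\u = 0$ and a global integration by parts (valid since $R_h\u - \u \in H^1_0(\Om)^3$ and $\lambda_h \in H^1(\Om)$ is continuous under the Taylor-Hood hypotheses, so no face contributions appear) to write $(\nabla\cdot R_h\u, \lambda_h) = -(R_h\u - \u, \nabla\lambda_h)$, which is bounded by $\inftynorm{R_h\u - \u}\,\norm{\nabla\lambda_h}_{L^1(\Om)} \le C\lnhh^2\inftynorm{\u}$ once the key estimate $\norm{\nabla\lambda_h}_{L^1(\Om)} \le C\lnhh$ is in hand.

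For the pressure term, $\nabla\cdot\g = 0$ gives $(p, \nabla\cdot\g_h) = (p, \nabla\cdot(\g_h - \g))$, which I bound by $\inftynorm{p}\,\norm{\nabla(\g_h - \g)}_{L^1(\Om)} \le Ch\lnhh\,\inftynorm{p}$, assuming the second key bound $\norm{\nabla(\g - \g_h)}_{L^1(\Om)} \le Ch\lnhh$. Assembling these contributions yields the claimed estimate. The hard part will be establishing the two $L^1$ bounds on $\nabla\lambda_h$ and on the gradient error $\nabla(\g - \g_h)$, which I would prove by a dyadic decomposition of $\Om$ into annuli around $x_0$, iterating the local energy estimate (\Cref{proposition:local_energy_estimate}) on each annulus and using the pointwise Green's matrix bounds (\Cref{proposition:Green_matrix}) as reference decay together with the localized $H^2$ regularity (\Cref{lemma:localh2}) to control the cut-off losses; the logarithmic factors then emerge from summing over the dyadic scales.
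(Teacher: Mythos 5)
Your overall skeleton coincides with the paper's: a regularized Stokes Green's function duality, the component-wise Laplacian Ritz projection $R_h$ with its $\lnhh$-stability from \cite{MR3470741} to handle the velocity, and an $L^1$ bound $\onenorm{\nabla(\g_0-\g_{0,h})}\le Ch\lnhh$ for the pressure term (that bound, and your dyadic strategy for it, is essentially the paper's Lemma~\ref{theorem:g0_l1}, although your sketch omits the local duality argument of Theorem~\ref{theorem:g0_dual} that is needed to absorb the pollution term $d_j^{-1}\norm{\g_0-\g_{0,h}}_{L^2(\Omega_j')}$ produced by the local energy estimate).

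The genuine gap is in your treatment of $(\nabla R_h\u,\nabla\g_h)$ via the discrete adjoint equation. That step leaves you with $(\nabla\cdot R_h\u,\lambda_h)=-(R_h\u-\u,\nabla\lambda_h)$ and hence with the requirement $\onenorm{\nabla\lambda_h}\le C\lnhh$ for the \emph{discrete} pressure of the regularized Green's function. None of the tools you list produces this: the local energy estimate (Proposition~\ref{proposition:local_energy_estimate}) controls only the velocity gradient error, the Green's matrix decay and the local $H^2$ regularity concern the continuous pair $(\g_0,\lambda_0)$, and passing from $\nabla\lambda_0$ to $\nabla\lambda_{0,h}$ would require local $W^{1,1}$-type error estimates for the discrete pressure --- exactly the extra layer of pressure analysis the paper deliberately avoids. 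The paper sidesteps this by never invoking the discrete pressure: it writes $(\nabla R_h\u,\nabla\g_{0,h})=(\nabla R_h\u,\nabla(\g_{0,h}-\g_0))-(R_h\u,\Delta\g_0)$ and substitutes the PDE $-\Delta\g_0=\delta_h\vec e_i-\nabla\lambda_0$, so only the \emph{continuous} pressure gradient appears, for which Lemma~\ref{lemma:lambda0} gives $\onenorm{\nabla\lambda_0}\le C\lnhh$; the remaining error term is bounded by the inverse inequality, $\abs{(\nabla R_h\u,\nabla(\g_{0,h}-\g_0))}\le Ch^{-1}\inftynorm{R_h\u}\onenorm{\nabla(\g_{0,h}-\g_0)}\le C\lnhh^2\inftynorm{\u}$. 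You should replace your discrete-adjoint manipulation by this continuous-level substitution (or else supply a proof of the $L^1$ bound on $\nabla\lambda_{0,h}$, which is a substantially harder and independent result).
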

The additional logarithmic factor in front of the velocity is probably not optimal, it appears when applying a pointwise estimate for the Ritz projection. We also get the respective local estimates.
\begin{theorem}[Interior $L^{\infty}$ error estimate for the velocity]
	\label{theorem:Linfty_localization}
	Under the assumptions of \Cref{ss:fem_approximation} and \Cref{ss:interpolation_assumptions}, with $D_1= B_r(\xx)\cap \Omega$, $D_2= B_{\rr}(\xx)\cap \Omega$, $\rr>r>\bar \kappa h$ (with $\bar \kappa$ large enough), $d=\rr-r\geq \bar\kappa h$ and for $(\u,p)\in (L^{\infty}(D_2)^3 \times L^{\infty}(D_2))\cap (H^1_0(\Omega)^3 \times L^2_0(\Omega))$  the solution to \cref{eq:def_stoke_1,eq:def_stoke_2,eq:def_stoke_3} and $(\u_h,p_h)$ the solution to \cref{eq:def_stoke_fem_equation}, it holds
	\begin{multline}
	\norm{\u_h}_{L^{\infty}(D_1)}\nonumber
	\leq C\lnhh\left(\lnhh \norm{\u}_{L^{\infty} (D_2)} +  h\norm{p}_{L^{\infty} (D_2)}\right) \\
	+ C_d\lnhh\left( h \norm{\u}_{H^1(\Omega)} +  \twonorm{\u} +  h \twonorm{p}\right).
	\end{multline}
	Here, the constant $C_d$ depends on the distance of $B_{r}(\xx)$ from $\partial B_{\rr}(\xx)$.
\end{theorem}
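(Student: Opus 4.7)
The plan is to adapt the regularized Green's function / duality argument used for the global $L^{\infty}$ bound in \Cref{theorem:Linfty_nonzerodivergence}, combining it with a cutoff localization in the same spirit as \Cref{theorem:W1infty_localization}. Fix $\xx\in \bar D_1$ and a unit vector $\vec e$ so that $\abs{\u_h(\xx)\cdot \vec e}$ equals $\norm{\u_h}_{L^\infty(D_1)}$, and introduce the regularized Dirac $\delta_h$ supported in the mesh cell containing $\xx$, together with the regularized Green's function $(\G,\Lambda)$ solving $-\Delta \G + \nabla \Lambda = \vec e\,\delta_h$, $\nabla\cdot \G = 0$, $\G|_{\partial\Omega}=\vec 0$, and its Galerkin approximation $(\G_h,\Lambda_h)$. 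Up to a controllable error, $(\u_h,\vec e\,\delta_h) = \u_h(\xx)\cdot \vec e$, and Galerkin orthogonality yields the identity
\begin{equation*}
\u_h(\xx)\cdot \vec e \;=\; a((\u_h,p_h),(\G_h,\Lambda_h)) \;=\; a((\u,p),(\G_h,\Lambda_h)).
\end{equation*}

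Next, I would pick a smooth cutoff $\omega$ with $\omega \equiv 1$ on an intermediate region $D^*$ slightly larger than $D_1$, supported in $D_2$, and with $\abs{\nabla^s\omega}\lesssim d^{-s}$, and split the right-hand side as $a((\u,p),(\omega\G_h,\omega\Lambda_h)) + a((\u,p),((1-\omega)\G_h,(1-\omega)\Lambda_h))$. For the local piece, I would invoke the pointwise Ritz-projection stability result from \cite{MR3470741} to dominate gradient-of-velocity contributions by $\lnhh\,\norm{\u}_{L^\infty(D_2)}$, together with the $L^1$-type bounds $\norm{\G_h}_{W^{1,1}(\Omega)} + \norm{\Lambda_h}_{L^1(\Omega)} \leq C\lnhh$ developed in \Cref{section:duality_interpolation}. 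The support restriction of $\omega$ confines the $\u,p$ norms to $D_2$, producing the local contribution $C\lnhh(\lnhh\norm{\u}_{L^\infty(D_2)} + h\norm{p}_{L^\infty(D_2)})$.

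For the remote piece $((1-\omega)\G_h,(1-\omega)\Lambda_h)$, supported away from $\xx$, the pair $(\G_h,\Lambda_h)$ is effectively smooth, with decay controlled by \Cref{proposition:Green_matrix}; applied dyadically, the local energy estimate \Cref{proposition:local_energy_estimate} then provides weighted $L^2$ stability for $\G - \G_h$ and $\Lambda - \Lambda_h$ on annular subregions. Bounding the resulting integrals produces a contribution of the form $C_d\lnhh(\twonorm{\u} + h\norm{\u}_{H^1(\Omega)} + h\twonorm{p})$, where $C_d$ absorbs the factors $d^{-1}$ and $d^{-2}$ coming from derivatives of $\omega$.

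The main obstacle will be the incompatibility between the cutoff and the discrete divergence constraint: multiplying $(\G_h,\Lambda_h)$ by $\omega$ destroys the discrete divergence-free property and generates commutator terms of the form $[\nabla\cdot,\omega]\G_h$, which have to be controlled together with the singular part of $\G_h$ near $\xx$. Resolving this requires a careful combination of the $L^1$/$\lnhh$ Green's function bounds in \Cref{section:duality_interpolation} with the weighted energy estimates of \Cref{proposition:local_energy_estimate}, arranged so that only $\norm{\u}_{L^\infty(D_2)}$ and $\norm{p}_{L^\infty(D_2)}$ (not their gradients) appear in the local contribution. The second delicate point is that, since $\u$ is only assumed bounded on $D_2$, the pointwise Ritz-projection estimate must be used to trade a lost derivative for an extra $\lnhh$, which is the mechanism producing the additional logarithmic factor in front of $\norm{\u}_{L^\infty(D_2)}$.
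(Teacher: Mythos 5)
Your starting point agrees with the paper's: represent $\u_{h,i}(\x_0)$ via the regularized Green's function $(\g_0,\lambda_0)$ with data $\delta_h\vec e_i$, use Galerkin orthogonality to pass to $a((\u,p),(\g_{0,h},\lambda_{0,h}))$, and invoke the Ritz-projection max-norm stability to trade the lost derivative on $\u$ for an extra $\lnhh$. However, your localization mechanism is not the paper's, and the step you yourself flag as the ``main obstacle'' is a genuine gap, not a technicality. Multiplying the \emph{discrete} pair $(\g_{0,h},\lambda_{0,h})$ by a cutoff $\omega$ does more damage than producing commutator terms: $\omega\g_{0,h}$ is no longer in $\V_h$, so the identity $(\nabla\u,\nabla(\omega\g_{0,h}))=(\nabla R_h\u,\nabla(\omega\g_{0,h}))$ — which is the only way to replace $\nabla\u$ by $R_h\u$ and hence to end up with $\norm{\u}_{L^\infty(D_2)}$ rather than $\norm{\nabla\u}_{L^\infty(D_2)}$ — is simply unavailable on each piece of your splitting. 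Since the theorem assumes only $\u\in L^\infty(D_2)^3$, you cannot afford any gradient of $\u$ in the local contribution, and your proposal does not explain how to avoid it once the cutoff is in place. Repairing this would require re-projecting $\omega^2\g_{0,h}$ via $P_h$ and invoking super-approximation, i.e.\ essentially rebuilding the local energy machinery inside the representation formula.

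The paper sidesteps all of this by never cutting off the test functions. It keeps $(\g_{0,h},\lambda_{0,h})$ intact, performs the Ritz substitution $(\nabla\u,\nabla\g_{0,h})=(\nabla R_h\u,\nabla\g_{0,h})$ globally, integrates by parts against the continuous $\g_0$ to produce $(R_h\u,\delta_h\vec e_i-\nabla\lambda_0)$ plus the error term $(\nabla R_h\u,\nabla(\g_{0,h}-\g_0))$, and only then localizes at the level of the integrals: each pairing is split over $D_2$ and $\Omega\setminus D_2$, with $L^\infty$--$L^1$ H\"older on $D_2$ and weighted $L^2$--$L^2$ H\"older on the complement, using that $\sigma^{-3/2}\le C_d$ off $D_2$. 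The required bounds are exactly \Cref{theorem:g0_l1}, \Cref{corollary:g0_convergence} and \Cref{lemma:lambda0} (for $\nabla(\g_0-\g_{0,h})$ and $\nabla\lambda_0$ in $L^1$ and in the $\sigma^{3/2}$-weighted $L^2$ norm), an inverse inequality to absorb $h^{-1}$ against $\onenorm{\nabla(\g_0-\g_{0,h})}\le Ch\lnhh$, and the \emph{local} Ritz estimate \Cref{proposition:local_ritz_projection}, which is what produces the $C_d(h\norm{\u}_{H^1(\Omega)}+\twonorm{\u})$ terms; the constant $C_d$ arises from $\sigma^{-3/2}$ and from \Cref{proposition:local_ritz_projection}, not from derivatives of a cutoff. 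Your bounds $\norm{\G_h}_{W^{1,1}(\Omega)}+\norm{\Lambda_h}_{L^1(\Omega)}\le C\lnhh$ are also not the ones the argument runs on: what is needed is the error $\onenorm{\nabla(\g_0-\g_{0,h})}\le Ch\lnhh$ together with $\onenorm{\nabla\lambda_0}\le C\lnhh$.
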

Based on these theorems, we can derive the following corollaries for general subdomains $\Omega_1 \subset \Omega_2 \subset \Omega$ with $dist(\bar\Omega_1, \partial \Omega_2)\geq d\geq \bar \kappa h$.
\begin{corollary}[Interior $W^{1,\infty}$ estimate for the velocity and $L^{\infty}$ estimate for the pressure]
	\label{corollary:w1_localization}
	Under the assumptions of \Cref{ss:fem_approximation} and \Cref{ss:interpolation_assumptions}, $\Omega_1 \subset \Omega_2 \subset \Omega$ with $dist(\bar \Omega_1, \partial \Omega_2)\geq d \geq \bar \kappa h$ and for $(\u,p)\in (W^{1,\infty}(\Omega_2)^3 \times L^{\infty}(\Omega_2))\cap (H^1_0(\Omega)^3 \times L^2_0(\Omega))$ the solution to \cref{eq:def_stoke_1,eq:def_stoke_2,eq:def_stoke_3} and $(\u_h,p_h)$ the solution to \cref{eq:def_stoke_fem_equation}, we have
	\begin{multline}
	\norm{\nabla \u_h}_{L^{\infty}(\Omega_1)} +\norm{p_h}_{L^{\infty}(\Omega_1)}\nonumber
	\leq C\left(\norm{\nabla\u}_{L^{\infty}(\Omega_2)} + \norm{p}_{L^{\infty}(\Omega_2)} \right) \\
	+C_d\Big(\twonorm{\nabla\u} +\twonorm{p}\Big).
	\end{multline}
	Here, the constant $C_d$ depends on the distance to $\Omega_1$ from $\partial \Omega_2$.
\end{corollary}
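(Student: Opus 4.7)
The plan is to reduce the statement to \Cref{theorem:W1infty_localization} by a standard covering argument, exploiting the fact that the right-hand side of \Cref{theorem:W1infty_localization} already contains \emph{global} $L^2$ norms, so only the $L^{\infty}$ terms need to be localized.

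First, set $r = d/2$ and cover the compact set $\bar \Omega_1$ by finitely many balls $B_r(\xx_i)$ with centers $\xx_i \in \bar\Omega_1$, for $i = 1,\dots,N$. The number $N$ of balls can be bounded in terms of $\text{diam}(\Omega)/d$, but we do not actually need to track this because we will take a maximum, not a sum. For each $i$ define $D_1^{(i)} = B_r(\xx_i) \cap \Omega$ and $D_2^{(i)} = B_{\tilde r}(\xx_i) \cap \Omega$ with $\tilde r = d$, so that $\tilde r - r = d/2 \geq \bar\kappa h/2$, which (up to adjusting the constant $\bar\kappa$) satisfies the distance hypothesis of \Cref{theorem:W1infty_localization}. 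By the assumption $\text{dist}(\bar\Omega_1,\partial\Omega_2)\geq d$ and $\xx_i \in \bar\Omega_1$ we moreover have $D_2^{(i)} \subset \Omega_2$.

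Next, apply \Cref{theorem:W1infty_localization} on each pair $(D_1^{(i)}, D_2^{(i)})$ to obtain
\begin{equation}
\norm{\nabla \u_h}_{L^{\infty}(D_1^{(i)})} + \norm{p_h}_{L^{\infty}(D_1^{(i)})}
\leq C\bigl(\norm{\nabla \u}_{L^{\infty}(D_2^{(i)})} + \norm{p}_{L^{\infty}(D_2^{(i)})}\bigr)
+ C_{d/2}\bigl(\twonorm{\nabla \u} + \twonorm{p}\bigr).
\end{equation}
Using $D_2^{(i)} \subset \Omega_2$ bounds the $L^\infty$ factors by $\norm{\nabla\u}_{L^{\infty}(\Omega_2)} + \norm{p}_{L^{\infty}(\Omega_2)}$, with a constant independent of $i$. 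Since $\bigcup_i D_1^{(i)} \supset \Omega_1$, taking the maximum over $i$ yields
\begin{equation}
\norm{\nabla \u_h}_{L^{\infty}(\Omega_1)} + \norm{p_h}_{L^{\infty}(\Omega_1)}
\leq C\bigl(\norm{\nabla\u}_{L^{\infty}(\Omega_2)} + \norm{p}_{L^{\infty}(\Omega_2)}\bigr) + C_d\bigl(\twonorm{\nabla\u} + \twonorm{p}\bigr),
\end{equation}
where the final constant $C_d$ absorbs the $d$-dependence of $C_{d/2}$.

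The only mildly delicate point is the choice of the inflated radius: one must select $\tilde r$ so that simultaneously (i) $\tilde r - r \geq \bar\kappa h$ to invoke \Cref{theorem:W1infty_localization}, and (ii) $B_{\tilde r}(\xx_i) \cap \Omega \subset \Omega_2$ for every center $\xx_i \in \bar\Omega_1$; the symmetric choice $r = \tilde r - r = d/2$ accomplishes both. No further obstacle arises, since the global $L^2$ terms on the right-hand side of \Cref{theorem:W1infty_localization} are insensitive to the covering and appear unchanged in the final estimate. The analogous covering argument underlies \Cref{corollary:localh2} referenced earlier in the paper.
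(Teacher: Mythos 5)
Your proof is correct and follows essentially the same route as the paper: cover $\bar\Omega_1$ by balls centered in $\bar\Omega_1$ with inflated concentric balls contained in $\Omega_2$, apply \Cref{theorem:W1infty_localization} on each pair, and combine. Your version is in fact slightly cleaner, since taking the maximum over the patches makes the finite-overlap bookkeeping that the paper invokes unnecessary for $L^\infty$ norms; the only caveat, which you already flag, is that the radii $r=\tilde r-r=d/2$ satisfy the theorem's hypotheses only after adjusting $\bar\kappa$ by a factor of two.
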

\begin{proof}
	We can construct a covering $\{K_i\}^M_{i=1}$ of $\Omega_1$, with $K_i = B_{\rr_i}(\xx_i)\cap \Omega_1$ such that
	\begin{enumerate}[label={(\arabic*)}]
		\item $\Omega_1 \subset \bigcup^M_{i=1}K_i$.
		\item $\xx_i \in \bar \Omega_1$ for $1\leq i \leq M$.
		\item Let $L_i = B_{r_i}(\xx_i)\cap \Omega_2$ where $r_i = \rr_i+d$. There exists a fixed number $N$ such that each point $\x \in \bigcup^M_{i=1}L_i$ is contained in at most $N$ sets from $\{L_j\}^M_{j=1}$.
	\end{enumerate}
	Now, since $dist(\bar\Omega_1, \partial \Omega_2)\geq d$ and (2), we have that $\bigcup_{i=1}^M \subset \Omega_2$. We can apply \Cref{theorem:W1infty_localization} to the pairs $K_i \subset L_i$:
	\begin{align}
	\norm{\nabla \u_h}_{L^{\infty}(\Omega_1)} &+\norm{p_h}_{L^{\infty}(\Omega_1)} \leq \sum_{i=1}^{M} \norm{\nabla \u_h}_{L^{\infty}(K_i)} +\norm{p_h}_{L^{\infty}(K_i)} \\
	&\leq \sum_{i=1}^{M} \Big(C\left(\norm{\nabla\u}_{L^{\infty}(L_i)} + \norm{p}_{L^{\infty}(L_i)} \right) +C_d\left(\twonorm{\nabla\u} + \twonorm{p}\right)\Big)\\
	&\leq N\Big(C\left(\norm{\nabla\u}_{L^{\infty}(\Omega_2)} + \norm{p}_{L^{\infty}(\Omega_2)} \right)
	+C_d\left(\twonorm{\nabla\u} + \twonorm{p}\right)\Big),
	\end{align}
	where we used (3) in the third line.
\end{proof}
Similarly, the following corollary follows with $dist(\bar \Omega_1, \partial \Omega_2)\geq d$.
\begin{corollary}[Interior $L^{\infty}$ error estimate for the velocity]
	Under the assumptions of \Cref{ss:fem_approximation} and \Cref{ss:interpolation_assumptions}, $\Omega_1 \subset \Omega_2 \subset \Omega$ with $dist(\bar\Omega_1, \partial \Omega_2)\geq d \geq \bar \kappa h$ and for $(\u,p)\in (L^{\infty}(\Omega_2)^3 \times L^{\infty}(\Omega_2))\cap (H^1_0(\Omega)^3 \times L^2_0(\Omega))$ the solution to \cref{eq:def_stoke_1,eq:def_stoke_2,eq:def_stoke_3} and  $(\u_h,p_h)$ the solution to \cref{eq:def_stoke_fem_equation}, we have
	\begin{multline}
	\norm{\u_h}_{L^{\infty}(\Omega_1)}\nonumber
	\leq C\lnhh\Big(\lnhh \norm{\u}_{L^{\infty} (\Omega_2)} + h\norm{p}_{L^{\infty} (\Omega_2)}\Big) \\
	+ C_d\lnhh\Big( h \norm{\u}_{H^1(\Omega)}+ \twonorm{u} + h \twonorm{p} \Big).
	\end{multline}
	Here, the constant $C_d$ depends on the distance to $\Omega_1$ from $\partial \Omega_2$.
\end{corollary}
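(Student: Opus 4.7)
The strategy parallels verbatim the covering argument given just above for \Cref{corollary:w1_localization}: a patching reduction of the general-subdomain statement to repeated applications of \Cref{theorem:Linfty_localization} on balls. First I would construct a finite covering $\{K_i\}_{i=1}^M$ of $\Omega_1$ with $K_i = B_{\tilde r_i}(\tilde x_i)\cap\Omega_1$ and centers $\tilde x_i \in \bar\Omega_1$, together with enlargements $L_i = B_{r_i}(\tilde x_i)\cap\Omega_2$, $r_i = \tilde r_i + d$. The hypothesis $dist(\bar\Omega_1,\partial\Omega_2)\geq d$ guarantees $\bigcup_i L_i\subset\Omega_2$, and a standard Vitali-type selection yields a uniform bounded-overlap constant $N$ for the $L_i$ independent of $d$.

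Second, I would invoke \Cref{theorem:Linfty_localization} on each pair $K_i\subset L_i$, which gives
\[
\norm{\u_h}_{L^{\infty}(K_i)} \leq C\lnhh\bigl(\lnhh\norm{\u}_{L^{\infty}(L_i)} + h\norm{p}_{L^{\infty}(L_i)}\bigr) + C_d\lnhh\bigl(h\norm{\u}_{H^1(\Omega)} + \twonorm{\u} + h\twonorm{p}\bigr).
\]
Since $\norm{\u_h}_{L^{\infty}(\Omega_1)} = \max_i \norm{\u_h}_{L^{\infty}(K_i)}$ and $L_i\subset\Omega_2$, passing to the maximum over $i$ on the right-hand side dominates each local $L^{\infty}$ term by the corresponding norm on $\Omega_2$; the global $L^2$ and $H^1$ contributions are independent of $i$ and come along unchanged, reproducing exactly the bound in the statement.

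No step here is genuinely novel, and I do not anticipate any substantive obstacle. The one item requiring care is the bookkeeping of the $d$-dependence of the constants: the cardinality $M$ of the covering grows polynomially in $d^{-1}$ and $N$ is a universal geometric constant, but both are harmlessly absorbed into the $d$-dependent constant $C_d$ that the statement already permits. The proof should therefore read almost identically to the one written out for \Cref{corollary:w1_localization}, with \Cref{theorem:W1infty_localization} replaced by \Cref{theorem:Linfty_localization}.
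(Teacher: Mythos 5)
Your proposal is correct and follows essentially the same route as the paper, which proves this corollary by exactly the covering argument written out for \Cref{corollary:w1_localization}, with \Cref{theorem:W1infty_localization} replaced by \Cref{theorem:Linfty_localization}. Your use of the maximum over the patches $K_i$ rather than the sum is in fact slightly cleaner than the paper's write-up, since it avoids any dependence on the cardinality $M$ of the covering.
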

\begin{remark}
	We may also write the results above in terms of best approximation estimates. For example for $L^{\infty}$ global bounds:
	\begin{equation}
	\inftynorm{\u-\u_h} 
	\leq \inf_{(\v_h,q_h)\in \V_h \times M_h} C\lnhh \Big(\lnhh\inftynorm{\u - \v_h} + h\inftynorm{p - q_h}\Big).
	\end{equation}
	Naturally, this also applies for other results in this section.
\end{remark}
\begin{remark}
	Using the weighted discrete $\inf$-$\sup$ condition from \cite{MR1880723} it is possible to extend the the global estimate to the compressible case. However, for the applications we have in mind the incompressible Stokes system is sufficient.
\end{remark}

\section{Proof of main theorems}
\label{section:proof_of_main_theorems}
In this section, we reduce the proofs of
\Cref{theorem:W1infty_localization,theorem:Linfty_nonzerodivergence,theorem:Linfty_localization} for the velocity to certain estimates for the regularized Green's functions. The estimates for the pressure are given in \Cref{section:pressure}. 
To introduce the regularized Green's function we first need to introduce a regularized delta function. In addition we will require a certain weight function.
\subsection{Regularized delta function and the weight function}\label{section:technical_tools_and_estimates}

Let $R>0$ such that for any $\vec x \in \Omega$ the ball $B_{R}(\vec x)$ contains $\Omega$. Furthermore, let $\x_0$ be an arbitrary point of $\bar{\Omega}$ and $T_{\x_0}\in \Th$. In the following sections, we estimate $\abs{\partial_{x_j}\u_{h,i}(\x_0)}$, $\abs{\u_{h,i}(\x_0)}$ for arbitrary $1\leq i, j, \leq 3$ and $\abs{p(\x_0)}$. 

Next we introduce the parameters for the weight function $\sigma(\x)$. Parameter $\kappa > 1$ is a constant that is chosen to be large enough. Furthermore, let $h$ be suitably small such that $\kappa h \leq R$ (see also \cite[Remark 1.4]{MR2121575}).
In the following, we use a regularized Green's function to express the $L^{\infty}(\Omega)$ norm such that the problem is reduced to estimating the discretization error of the Green's function in the $L^1(\Omega)$ norm as in \cite{MR3422453,MR2945141}.
To that end, we define a smooth delta function $\delta_h\in C^1_0(T_{\x_0})$, which satisfies for every $\v_h\in \V_h$:
\begin{align}
\v_{h,i}(\x_0) &= (\v_h, \delta_h \vec e_i)_{T_{\x_0}} \label{eq:def_delta}\\
\norm{\delta_h}_{W^k_q(T_{\x_0})}&\leq C h^{-k-3(1-1/q)}, \quad 1 \leq q \leq \infty,\quad k=0, 1, \dots \label{eq:deltah_est} 
\end{align}
The construction of such a $\delta_h$ can be found in \cite[Appendix]{MR1297478}.
We recall some properties for $\sigma$ and $\delta_h$. 
By construction, it follows
\begin{equation}
\inf_{\x\in \Omega}\sigma(\x)\geq \kappa h. \label{eq:lower_bound_sigma}
\end{equation}
Next, we provide an estimate for the $L^2(\Omega)$ norm of the product of $\delta_h$ and $\sigma$.

\begin{lemma}
	\label{lemma:sigma_delta}
	There exists a constant $C$ such that for $\nu>0$
	\begin{equation}
	\twonorm{\sigma^{\nu}\nabla^k \delta_h} \leq 2^{\nu/2}C\kappa^{\nu}h^{\nu-k-3/2} \quad k=0,1.
	\end{equation}
\end{lemma}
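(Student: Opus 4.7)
The plan is to exploit the fact that $\delta_h$ is supported in the single simplex $T_{\x_0}$, which has diameter $\lesssim h$ and contains $\x_0$. On this small set the weight $\sigma$ cannot be much larger than $\kappa h$, so $\sigma^\nu$ pulls out as an $L^\infty$ bound and the remainder is handled by the already known $W^{k,q}$ estimate \cref{eq:deltah_est} for $\delta_h$ with $q=2$.

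More concretely, first I would observe that for every $\vec x \in T_{\x_0}$ one has $|\vec x - \vec x_0| \leq h$ by quasi-uniformity (the diameter bound of the triangulation), whence
\begin{equation*}
\sigma(\vec x)^2 = |\vec x - \vec x_0|^2 + (\kappa h)^2 \leq h^2 + (\kappa h)^2 \leq 2(\kappa h)^2,
\end{equation*}
using $\kappa > 1$ from the choice made in \Cref{section:technical_tools_and_estimates}. Raising this to the power $\nu/2$ yields the pointwise bound
\begin{equation*}
\sigma(\vec x)^\nu \leq 2^{\nu/2}\kappa^\nu h^\nu \quad \text{for all } \vec x \in T_{\x_0}.
\end{equation*}

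Next, since $\delta_h$ vanishes outside $T_{\x_0}$, I would pull the $L^\infty$ bound on $\sigma^\nu$ out of the $L^2(\Omega)$-norm:
\begin{equation*}
\norm{\sigma^\nu \nabla^k \delta_h}_{L^2(\Omega)} = \norm{\sigma^\nu \nabla^k \delta_h}_{L^2(T_{\x_0})} \leq 2^{\nu/2}\kappa^\nu h^\nu \, \norm{\nabla^k \delta_h}_{L^2(T_{\x_0})}.
\end{equation*}
The assumption \cref{eq:deltah_est} applied with $q=2$ gives $\norm{\nabla^k \delta_h}_{L^2(T_{\x_0})} \leq C h^{-k-3/2}$, and combining the two factors yields exactly the claimed estimate $\norm{\sigma^\nu \nabla^k \delta_h}_{L^2(\Omega)} \leq 2^{\nu/2} C \kappa^\nu h^{\nu - k - 3/2}$.

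There is no real obstacle here: the result is a direct consequence of the localization of $\delta_h$ to a single mesh element together with the already established $W^{k,q}$ bound. The only mild subtlety is tracking the explicit factor $2^{\nu/2}$, which is why one has to argue via $\kappa > 1$ to bound $h^2 + (\kappa h)^2$ by $2(\kappa h)^2$ rather than absorbing everything into a generic constant.
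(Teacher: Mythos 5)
Your proposal is correct and is exactly the paper's argument: the paper's one-line proof likewise uses that $\delta_h$ is supported in $T_{\x_0}$, that $\sigma\le\sqrt{2}\kappa h$ there, and the bound \cref{eq:deltah_est} with $q=2$. Your write-up just makes the same steps explicit.
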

\begin{proof}
	This follows from the fact that $\delta_h$ is only non-zero on $T_{\x_0}$, $\sigma$ is bounded on $T_{\x_0}$ by $\sqrt{2}\kappa h$ and \cref{eq:deltah_est}.
\end{proof}

The general strategy for proving the local results is to partition the domain into the local part and its complement. Then, we use regularized Green's function estimates in the $L^1$ norm on the local part and weighted $L^2$ norm on the complement. For the $L^{\infty}$ error estimates we additionally require a certain estimate for the Ritz projection.

\subsection{Estimates for \texorpdfstring{$W^{1,\infty}(\Omega)$}{W(1,infty)(Omega)}}

The proof of local  $W^{1,\infty}(\Omega)$ error estimates is similar to the global case \cite{MR3422453,MR2945141} and is obtained by introducing a regularized Green's function.
\subsubsection{Regularized Green's function}
For the $W^{1,\infty}$ error estimates, we define the regularized Green's function $(\g_1,\lambda_1)\in H^1_0(\Omega)^3\times L^2_0(\Omega)$  as the solution  to
\begin{subequations}
	\begin{alignat}{3}
	-\Delta \g_1 + \nabla \lambda_1 &= (\partial_{x_j}\delta_h) \vec e_i &&\quad \text{ in } \Omega, \label{eq:def_stoke_green1_1}\\
	\nabla \cdot \g_1 &= 0 &&\quad \text{ in } \Omega,\label{eq:def_stoke_green1_2} \\
	\g_1 &= \vec 0 &&\quad \text{ on } \partial \Omega.\label{eq:def_stoke_green1_3}
	\end{alignat}
\end{subequations}
We also define the finite element approximation $(\g_{1,h},\lambda_{1,h}) \in \V_h \times M_h$ by
\begin{equation}
a((\g_1 - \g_{1,h},\lambda_1-\lambda_{1,h}),(\v_h,q_h)) = 0 \quad \forall (\v_h,q_h) \in \V_h \times M_h.\label{eq:def_stoke_green1_fem}
\end{equation}
\subsubsection{Auxiliary results for \texorpdfstring{{$(\g_1,\lambda_1)$ and $(\g_{1,h},\lambda_{1,h})$}}{(g1,lambda1) and (g1h,lambda1h)}}

To show our main interior $W^{1,\infty}$ result, we need the regularized Green's function error estimate in $L^1(\Omega)$ norm which is given in \cite[Lemma 5.2]{MR2945141}.
\begin{lemma}
	\label{proposition:g1_l1}
	There exists a constant $C$ independent of $h$ and $\g_1$ such that
	\begin{equation}
	\onenorm{\nabla (\g_1-\g_{1,h})} \leq C.
	\end{equation}
\end{lemma}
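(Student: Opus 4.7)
The plan is to combine a dyadic annular decomposition of $\Omega$ centered at $\x_0$ with the local energy estimate of \Cref{proposition:local_energy_estimate} and the pointwise Green's matrix bounds of \Cref{proposition:Green_matrix}. Set $d_j = 2^j \kappa h$ and $A_j = \{\x \in \Omega : d_j \le \abs{\x - \x_0} < d_{j+1}\}$ for $j = 0,1,\dots,J$, where $J$ is chosen so that $d_{J+1}$ exceeds the diameter of $\Omega$ (so $J = \O(\lnhh)$), together with the innermost region $A_{-1} = B_{\kappa h}(\x_0) \cap \Omega$. Then $\Omega = \bigcup_{j=-1}^{J} A_j$ and $\abs{A_j} \lesssim d_j^3$, so Cauchy--Schwarz on each piece yields
\begin{equation*}
\norm{\nabla(\g_1 - \g_{1,h})}_{L^1(A_j)} \le \abs{A_j}^{1/2}\, \norm{\nabla(\g_1 - \g_{1,h})}_{L^2(A_j)} \lesssim d_j^{3/2}\, \norm{\nabla(\g_1 - \g_{1,h})}_{L^2(A_j)}.
\end{equation*}
It therefore suffices to show that $\norm{\nabla(\g_1 - \g_{1,h})}_{L^2(A_j)} \lesssim h\, d_j^{-5/2}$ on the outer annuli and to treat $A_{-1}$ separately.

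For the innermost region $A_{-1}$, I would use the global energy bound: $\norm{(\partial_{x_j}\delta_h)\vec e_i}_{H^{-1}(\Omega)} \le \norm{\delta_h}_{L^2(\Omega)} \lesssim h^{-3/2}$ by \cref{eq:deltah_est}, which together with \cref{eq:H1_estimate2} and the quasi-optimality of \Cref{proposition:fem_convergence} gives $\norm{\nabla(\g_1 - \g_{1,h})}_{L^2(\Omega)} \lesssim h^{-3/2}$. Hence $\norm{\nabla(\g_1 - \g_{1,h})}_{L^1(A_{-1})} \lesssim (\kappa h)^{3/2} \cdot h^{-3/2}$, which is bounded independently of $h$.

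For each outer annulus $A_j$ with $j \ge 0$, apply \Cref{proposition:local_energy_estimate} on $A_j \subset A_j'$ with separation of order $d_j$. The key ingredient for the approximation terms is that, since $\delta_h$ is supported in $T_{\x_0}$, the representation \cref{eq:greens_matrix_2} together with \Cref{proposition:Green_matrix} (and an interior elliptic bootstrap using that $\lambda_1$ is harmonic on $\Omega \setminus T_{\x_0}$) yields the pointwise decay $\abs{\nabla^k\g_1(\x)} + \abs{\nabla^{k-1}\lambda_1(\x)} \lesssim \abs{\x - \x_0}^{-2-k}$ for $\abs{\x-\x_0} \ge 2\kappa h$. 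Integrating over $A_j'$ then produces the weighted regularity estimate
\begin{equation*}
\norm{\g_1}_{H^2(A_j')} + \norm{\lambda_1}_{H^1(A_j')} \lesssim d_j^{-5/2},
\end{equation*}
and \Cref{assumption:approximation} bounds the approximation-type contributions in the local energy estimate by $C\, h\, d_j^{-5/2}$.

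The crux of the argument is to absorb or control the two remaining terms in \Cref{proposition:local_energy_estimate}, namely $\varepsilon\, \norm{\nabla(\g_1 - \g_{1,h})}_{L^2(A_j')}$ and the lower-order $d_j^{-1}\, \norm{\g_1 - \g_{1,h}}_{L^2(A_j')}$. The plan is to iterate the local energy estimate over a finite chain of nested shells $A_j \subset A_j^{(1)} \subset A_j^{(2)} \subset \cdots$ so that the $\varepsilon$-term is absorbed by a geometric summation, and to control the $L^2$-error via an Aubin--Nitsche-type duality argument against an auxiliary Stokes problem, which supplies the missing factor of $h/d_j$. Combining everything yields $\norm{\nabla(\g_1-\g_{1,h})}_{L^2(A_j)} \lesssim h\, d_j^{-5/2}$ and hence $\norm{\nabla(\g_1-\g_{1,h})}_{L^1(A_j)} \lesssim h/d_j$; summing over $j \ge 0$ gives the geometric series $\kappa^{-1}\sum_{j \ge 0} 2^{-j}$, uniformly bounded in $h$. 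The principal technical obstacle is precisely this iteration/duality step: it is what prevents the low-order $L^2$-error from propagating and what keeps the $\O(\lnhh)$-many annuli from accumulating an unbounded logarithmic factor in the final sum.
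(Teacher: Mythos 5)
Your strategy is essentially the argument behind this lemma: the paper does not prove it but quotes it from \cite[Lemma 5.2]{MR2945141}, and the same dyadic-shell machinery is what the paper carries out in detail in \Cref{section:duality_interpolation} for the less singular Green's function $\g_0$ (\Cref{theorem:g0_l1}). Your bookkeeping is consistent with that template: the inner ball is handled by the global energy bound $\norm{\nabla(\g_1-\g_{1,h})}_{L^2(\Omega)}\lesssim h^{-3/2}$, the local regularity $\norm{\g_1}_{H^2(A_j')}+\norm{\lambda_1}_{H^1(A_j')}\lesssim d_j^{-5/2}$ follows from the Green's matrix decay combined with \Cref{corollary:localh2} (exactly as in \Cref{cor: g0_and_lambda_0}), and the resulting per-shell contribution $h/d_j$ sums geometrically with no logarithm --- the logarithm you worry about only arises for $\g_0$, where the per-shell contribution is flat in $j$.

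The one place your sketch is genuinely thinner than the working argument is the absorption step you yourself flag as the principal obstacle. Iterating the local energy estimate over nested shells for a \emph{fixed} $j$ does not obviously terminate, since each application spawns a new lower-order $L^2$ term on a yet larger shell. What is actually done (both in the cited reference and in the paper's proof of \Cref{theorem:g0_l1}) is to sum the weighted quantities $M_j=d_j^{3/2}\norm{\nabla(\g_1-\g_{1,h})}_{L^2(\Omega_j)}$ over all $j$ simultaneously, absorb $\varepsilon\sum_j M_j'$ into $\sum_j M_j$ for $\varepsilon$ small, and insert the duality estimate for $\norm{\g_1-\g_{1,h}}_{L^2(\Omega_j')}$ into the sum. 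Moreover, the duality bound leaves behind not only the factor $h/d_j$ you mention but also a term of size $(h/d_j)^{\alpha}\onenorm{\nabla(\g_1-\g_{1,h})}$; its sum over $j$ is $O(K^{-\alpha})\onenorm{\nabla(\g_1-\g_{1,h})}$ and must be kicked back into the left-hand side by choosing the inner-ball parameter $K$ large. With that mechanism spelled out, your outline closes.
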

In addition, we also need the following  weighted  estimate, the proof of which follows by a minor modification of the proof in \cite[Lemma 5.2]{MR2945141}.
\begin{corollary}
	\label{corollary:g1_sigma}
	There exists a constant $C$ independent of $h$ and $\g_1$ such that
	\begin{equation}
	\twonorm{\sigma^{3/2}\nabla (\g_1-\g_{1,h})} \leq C.
	\end{equation}
\end{corollary}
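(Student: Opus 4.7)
The plan is to mimic the proof of \Cref{proposition:g1_l1} but track the weight $\sigma^{3/2}$ through a dyadic decomposition about $\x_0$. Since $\sigma(\x) \simeq |\x-\x_0|$ away from $\x_0$ and $\sigma \simeq \kappa h$ near it, the weight is essentially constant on each dyadic annulus, which lets us reduce the weighted bound to a careful bookkeeping over annuli. Set $d_j = 2^j \kappa h$ for $j = 0, 1, \dots, J$ with $J \simeq \lnhh$ chosen so that $\Omega \subset B_{d_{J+1}}(\x_0)$, and decompose $\Omega = A_* \cup \bigcup_{j=0}^{J} A_j$ with $A_* = B_{\kappa h}(\x_0)\cap \Omega$ and $A_j = \{\x \in \Omega : d_j \le |\x - \x_0| \le d_{j+1}\}$. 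Let $A_j^*$ be a mildly enlarged annulus so that $\mathrm{dist}(\overline{A_j}, \partial A_j^* \setminus \partial \Omega) \simeq d_j$.

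For the innermost region I would use the trivial estimate $\sigma \le \sqrt{2}\kappa h$ on $A_*$ together with the global energy bound
\begin{equation*}
\norm{\nabla (\g_1 - \g_{1,h})}_{L^2(\Omega)} \le C\norm{\nabla \g_1}_{L^2(\Omega)} \le C\norm{\partial_{x_j}\delta_h}_{H^{-1}(\Omega)} \le C\norm{\delta_h}_{L^2(\Omega)} \le C h^{-3/2}
\end{equation*}
(via \cref{eq:H1_estimate2} and \cref{eq:deltah_est}), which yields $\norm{\sigma^{3/2}\nabla(\g_1-\g_{1,h})}_{L^2(A_*)} \le C(\kappa h)^{3/2} h^{-3/2} \le C$. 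On each outer annulus $A_j$ I apply the local energy estimate (\Cref{proposition:local_energy_estimate}) with $d \simeq d_j$ to get
\begin{equation*}
\norm{\nabla(\g_1-\g_{1,h})}_{L^2(A_j)} \le C\!\left[\norm{\nabla(\g_1 - P_h \g_1)}_{L^2(A_j^*)} + \norm{\lambda_1 - r_h\lambda_1}_{L^2(A_j^*)} + \tfrac{1}{d_j}\norm{\g_1 - \g_{1,h}}_{L^2(A_j^*)}\right],
\end{equation*}
after absorbing the $\varepsilon$ term by a finite-overlap iteration over slightly larger annuli (standard kick-back). The best-approximation terms are controlled by \Cref{assumption:approximation} and the localized $H^2$ regularity of $(\g_1,\lambda_1)$ on $A_j^{**}$: since $\delta_h$ is supported in $T_{\x_0}$, the Green's-matrix pointwise bounds of \Cref{proposition:Green_matrix} (applied to the representation of $(\g_1,\lambda_1)$ differentiated once against $\vec\xi$) give $\norm{\nabla^2 \g_1}_{L^2(A_j^{**})} + \norm{\nabla \lambda_1}_{L^2(A_j^{**})} \le C d_j^{-5/2}$, so
\begin{equation*}
\norm{\nabla(\g_1 - \g_{1,h})}_{L^2(A_j)} \le C h\, d_j^{-5/2} + \tfrac{C}{d_j}\norm{\g_1 - \g_{1,h}}_{L^2(A_j^*)}.
\end{equation*}

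Multiplying by the (essentially constant) weight $\sigma^{3/2} \simeq d_j^{3/2}$ on $A_j$ and squaring yields
\begin{equation*}
\norm{\sigma^{3/2}\nabla(\g_1-\g_{1,h})}_{L^2(A_j)}^2 \le C \frac{h^2}{d_j^2} + \frac{C}{d_j}\norm{\g_1 - \g_{1,h}}_{L^2(A_j^*)}^2,
\end{equation*}
and summing over $j$ the leading contribution is the geometric series $\sum_{j\ge 0} h^2/(2^j\kappa h)^2 \le C\kappa^{-2}$. The lower-order $L^2$ error terms $\norm{\g_1-\g_{1,h}}_{L^2(A_j^*)}$, after dyadic summation, are controlled by a duality (Aubin--Nitsche) argument against the dual Stokes problem with a compactly supported forcing, yielding an $O(h)$ gain that beats the $d_j^{-1}$ loss after summation (this is the same duality step that produces \Cref{proposition:g1_l1}). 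Combining the contributions from $A_*$ and the annuli gives the claimed bound.

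The main obstacle is controlling the lower-order residuals $\tfrac{1}{d_j}\norm{\g_1 - \g_{1,h}}_{L^2(A_j^*)}$ after multiplying by the growing weight $d_j^{3/2}$; this is the step where the dyadic summation is delicate and where one has to invoke a duality argument (or an iteration coupling neighboring annuli) rather than a direct estimate. The remainder of the modifications to \cite{MR2945141} amounts to replacing the volume factor $|A_j|^{1/2}$ that appears when bounding $L^2$ by $L^1$ on $A_j$ by the weight factor $d_j^{3/2}$, which happens to give exactly the same geometric scaling in the end.
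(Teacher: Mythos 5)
Your proposal is correct and follows essentially the same route as the paper: both proofs observe that after the dyadic decomposition the weight contributes exactly the factor $d_j^{3/2}$ that the $L^1$-to-$L^2$ volume bound produces in the proof of \Cref{proposition:g1_l1}, so the weighted estimate reduces to rerunning the local energy/local regularity/duality machinery of \cite{MR2945141} and finishing with the known $L^1$ bound. The paper simply cites those steps rather than spelling them out as you do (and note the minor algebra slip when you square the annulus estimate: the lower-order term should carry $d_j$, not $d_j^{-1}$), but the argument is the same.
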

The details on the proof of this corollary are given in \Cref{section:duality_interpolation} where we introduce the respective dyadic decomposition.

\begin{remark}
	The results in \Cref{proposition:g1_l1} and \Cref{corollary:g1_sigma} also follow in a straightforward manner from the arguments in \cite{MR3422453} but are not available in our setting since we make different assumptions on the finite element space which we find similar but not directly compatible to the assumptions made in \cite{MR3422453}.
\end{remark}
\subsubsection{Localization}
\label{section:W1infty_localization}
We reduce the proof to estimates involving $\g_1$ and $\g_{1,h}$.
\begin{proof}[Proof of \Cref{theorem:W1infty_localization} (velocity)]
	Using the regularized Green's function as defined in \cref{eq:def_stoke_green1_1,eq:def_stoke_green1_2,eq:def_stoke_green1_3},  for $\x_0 \in T_{\x_0}\subset D_1$, we have as in \cite{MR2945141}
	\begin{align}
	&-\partial_{x_j}(\u_h)_i(\x_0) = (\u_h,(\partial_{x_j}\delta_h)\vec{e}_i) \tag{by \cref{eq:def_delta}}\\
	&= (\u_h, -\Delta \g_1 + \nabla \lambda_1) \tag{by \cref{eq:def_stoke_green1_1}} \\
	&= (\nabla \u_h, \nabla \g_1) + (\u_h, \nabla \lambda_1)\\
	&= (\nabla \u_h, \nabla \g_1) + (\u_h, \nabla \lambda_{1,h}) + (\nabla \u_h, \nabla (\g_{1,h}-\g_1)) \tag{by \cref{eq:def_stoke_green1_fem}} \\
	&= (\nabla \u_h, \nabla \g_{1,h}) \tag{discrete divergence} \\
	&= (\nabla \u, \nabla \g_{1,h}) + (p-p_h,\nabla \cdot \g_{1,h}) \tag{by \cref{eq:def_stoke_1,eq:def_stoke_fem_equation}} \\
	&= (\nabla \u, \nabla \g_{1,h}) + (p,\nabla \cdot \g_{1,h})\tag{by \cref{eq:def_stoke_green1_fem} and \cref{eq:def_stoke_green1_2}}\\
	&= (\nabla \u, \nabla (\g_{1,h}-\g_{1})) + (\nabla \u, \nabla \g_{1}) + (p,\nabla \cdot (\g_{1,h}-\g_1))\tag{continuous divergence}\\
	&:= I_1 + I_2 +I_3.
	\end{align}
	To treat $I_2$ we use integration by parts, the H\"{o}lder estimate, and \eqref{eq:deltah_est}
	\begin{equation}
	I_2 = (\u,-\Delta \g_1)+(\u,\nabla \lambda_1)=(\u, (\partial_{x_j}\delta_h)\vec{e_i})= (-\partial_{x_j}\u, \delta_h\vec{e_i}) \leq C\norm{\nabla\u}_{L^{\infty}(T_{\x_0})}.
	\end{equation}
	Since $r-\rr>\bar\kappa h$ this proves the result for $I_2$.
	
	For the other two terms,	we split the domain into $D_2$ and $\Omega \backslash D_2$. Using that $\sigma^{-1}>(\bar\kappa (\rr-r))^{-1}$ on $\Omega \backslash D_2$ and the H\"{o}lder estimates, we have
	\begin{align}
	I_1 +I_3 &\leq C\left(\norm{\nabla \u}_{L^{\infty}(D_2)} + \norm{p}_{L^{\infty}(D_2)}\right)\onenorm{\nabla(\g_{1,h}-\g_1)}\\
	&\quad + C\Big(\norm{\sigma^{-3/2}\nabla \u}_{L^2(\Omega\backslash D_2)} + \norm{\sigma^{-3/2}p}_{L^2(\Omega\backslash D_2)}\Big)\twonorm{\sigma^{3/2}\nabla(\g_{1,h}-\g_1)}\\
	&\leq C\Big(\norm{\nabla \u}_{L^{\infty}(D_2)} + \norm{p}_{L^{\infty}(D_2)}\Big)\onenorm{\nabla(\g_{1,h}-\g_1)}\\
	&\quad + C(\rr-r)^{-3/2}\Big(\twonorm{\nabla \u} + \twonorm{p}\Big)\twonorm{\sigma^{3/2}\nabla(\g_{1,h}-\g_1)}.
	\end{align}
	The result then follows from \Cref{proposition:g1_l1} and \Cref{corollary:g1_sigma}.
\end{proof}

\subsection{Estimates for \texorpdfstring{$L^{\infty}(\Omega)$}{L(infty)(Omega)}}

For this case we use the stability of the Ritz projection in $L^{\infty}(\Omega)$ norm as shown in \cite{MR3470741}.
\subsubsection{Regularized Green's function}
This time we define the approximate Green's function $(\g_0,\lambda_0)\in H^1_0(\Omega)^3\times L^2_0(\Omega)$ as the solution to
\begin{subequations}
	\begin{alignat}{3}
	-\Delta \g_0 + \nabla \lambda_0 &= \delta_h \vec e_i &&\quad \text{ in } \Omega, \label{eq:def_stoke_green0_1}\\
	\nabla \cdot \g_0 &= 0 &&\quad \text{ in } \Omega,\label{eq:def_stoke_green0_2} \\
	\g_0 &= \vec 0 &&\quad \text{ on } \partial \Omega.\label{eq:def_stoke_green0_3}
	\end{alignat}
\end{subequations}
Here, $\vec e_i$ is as before the $i$-th standard basis vector in $\R^3$. We also define the finite element approximation $(\g_{0,h},\lambda_{0,h}) \in \V_h \times M_h$ by
\begin{equation}
a((\g_0 - \g_{0,h},\lambda_0-\lambda_{0,h}),(\v_h,q_h)) = 0 \quad \forall (\v_h,q_h) \in \V_h \times M_h.\label{eq:def_stoke_green0_fem}
\end{equation}
Compared to \cref{eq:def_stoke_green1_1,eq:def_stoke_green1_2,eq:def_stoke_green1_3}, the right-hand side of \cref{eq:def_stoke_green0_1} is less singular, which means we can expect faster convergence.
\subsubsection{Auxiliary results for \texorpdfstring{{$(\g_0,\lambda_0)$, $(\g_{0,h},\lambda_{0,h})$}}{(g0,lambda0), (g0h,lambda0h)} and the Ritz projection}
Similarly to the $W^{1,\infty}$ case, we need certain error estimates for the discretization of the regularized Green's function $(\g_0,\lambda_0)$.
However in contrast to $(\g_1,\lambda_1)$, we could not locate such results in the literature. For our purpose we need to establish 
the following results, for which the proofs are given in \Cref{section:duality_interpolation}.
\begin{lemma}
	\label{theorem:g0_l1}
	Let $(\g_0,\lambda_0)$ be the solution of \cref{eq:def_stoke_green0_1,eq:def_stoke_green0_2,eq:def_stoke_green0_3} and $(\g_{0,h},\lambda_{0,h})$ the respective discrete solution. Then, it holds
	\begin{equation}
	\onenorm{\nabla(\g_0-\g_{0,h})} \leq Ch \lnhh.
	\end{equation}
\end{lemma}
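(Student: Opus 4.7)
The plan is to follow a dyadic decomposition argument in the spirit of Schatz--Wahlbin, using the local energy estimate from Proposition~\ref{proposition:local_energy_estimate} together with the pointwise bounds on the Green's matrix from Proposition~\ref{proposition:Green_matrix}. Fix $\x_0$ and set $d_j = 2^{-j}d_0$ with $d_0 \sim \mathrm{diam}(\Omega)$, choosing $J \sim \lnhh$ so that $d_J \sim \bar\kappa h$. Define the annuli $\Omega_j = \{\x\in\Omega: d_{j+1}\le |\x-\x_0|\le d_j\}$ and the innermost region $\Omega_\ast = \Omega \cap B_{d_J}(\x_0)$. On each annulus, Cauchy--Schwarz reduces the $L^1$ norm to an $L^2$ norm:
\begin{equation*}
\norm{\nabla(\g_0-\g_{0,h})}_{L^1(\Omega_j)} \le C d_j^{3/2}\norm{\nabla(\g_0-\g_{0,h})}_{L^2(\Omega_j)}.
\end{equation*}
The target on each annulus is therefore $\norm{\nabla(\g_0-\g_{0,h})}_{L^2(\Omega_j)} \lesssim h\,d_j^{-3/2}$, which after summing over $J\sim\lnhh$ annuli yields the claimed bound.

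To obtain this annular bound, I would apply Proposition~\ref{proposition:local_energy_estimate} on slightly enlarged annuli $\Omega_j \subset \Omega_j^\prime \subset \Omega_j^{\prime\prime}$ of comparable width $\sim d_j$. The interpolation terms are controlled by Assumptions~\ref{assumption:approximation} and \ref{assumption:superapproxmation_dyadic}, and by the pointwise Green's matrix estimates of Proposition~\ref{proposition:Green_matrix}: since $\delta_h$ is supported in $T_{\x_0}$ and $\mathrm{dist}(\Omega_j, T_{\x_0}) \sim d_j \gg h$ in this regime, the representation formula \eqref{eq:greens_matrix_2} gives the local regularity bounds $\norm{\nabla^k \g_0}_{L^2(\Omega_j^{\prime\prime})} \lesssim d_j^{1/2-k}$ for $k=1,2$ and $\norm{\nabla^k \lambda_0}_{L^2(\Omega_j^{\prime\prime})} \lesssim d_j^{-1/2-k}$ for $k=0,1$. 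These regularity bounds, together with the approximation properties of $P_h$ and $r_h$, give the interpolation contribution of size $h\,d_j^{-3/2}$.

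The real obstacle is the $L^2$-remainder $d_j^{-1}\norm{\g_0-\g_{0,h}}_{L^2(\Omega_j^{\prime\prime})}$ appearing in the local energy estimate: this term is of the same order as $\norm{\nabla(\g_0-\g_{0,h})}_{L^2(\Omega_j^{\prime\prime})}$ and can only be absorbed after trading an extra power of $h$ via duality. The approach here is a Aubin--Nitsche type argument: for a test function supported on $\Omega_j$, one solves the dual Stokes problem on $\Omega$ and exploits the global $H^2$-regularity \eqref{eq:H2_estimate} (using convexity) together with the local energy estimate applied to the dual problem. This produces a gain of one factor of $h$, leading to $\norm{\g_0-\g_{0,h}}_{L^2(\Omega_j)} \lesssim h^2 d_j^{-3/2}$, which when inserted into the local energy estimate on a chain of nested annuli (kick-back argument in the parameter $\varepsilon$) closes the bound $\norm{\nabla(\g_0-\g_{0,h})}_{L^2(\Omega_j)} \lesssim h\,d_j^{-3/2}$. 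The dyadic chain of enlargements only costs a fixed multiplicative constant because at each step the radii stay comparable.

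Finally, the innermost region $\Omega_\ast$ of volume $\sim h^3$ is handled by the global energy estimate (Proposition~\ref{proposition:fem_convergence}) together with the global $H^2$-bound $\norm{\g_0}_{H^2(\Omega)} + \norm{\lambda_0}_{H^1(\Omega)} \lesssim \norm{\delta_h}_{L^2(\Omega)} \lesssim h^{-3/2}$ from \eqref{eq:H2_estimate} and \eqref{eq:deltah_est}. This yields $\norm{\nabla(\g_0-\g_{0,h})}_{L^2(\Omega)} \lesssim h^{-1/2}$, and consequently $\norm{\nabla(\g_0-\g_{0,h})}_{L^1(\Omega_\ast)} \lesssim |\Omega_\ast|^{1/2} h^{-1/2} \lesssim h$. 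Summing the annular contributions $\sum_{j=0}^{J-1} h$ and the innermost contribution yields the overall bound $C h \lnhh$. The principal technical hurdle remains the duality step for the $L^2$-remainder, since in the Stokes setting one must carefully handle the pressure in the dual problem and verify that the $H^2$-regularity of the dual velocity and $H^1$-regularity of the dual pressure survive the cutoff procedure, which is where Corollary~\ref{corollary:localh2} becomes essential.
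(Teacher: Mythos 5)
Your overall architecture coincides with the paper's: dyadic annuli with $J\sim\lnhh$, Cauchy--Schwarz to pass from $L^1$ to $L^2$ on each annulus, the local energy estimate of Proposition~\ref{proposition:local_energy_estimate}, local regularity of $(\g_0,\lambda_0)$ of order $d_j^{-3/2}$ obtained from the Green's matrix bounds of Proposition~\ref{proposition:Green_matrix} combined with the local $H^2$ estimate of Corollary~\ref{corollary:localh2}, a duality argument for the $L^2$ remainder, and a kick-back; the treatment of the innermost region $\Omega_*$ via the global energy estimate and $\norm{\delta_h}_{L^2}\lesssim h^{-3/2}$ is identical to the paper's.

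The one genuine gap is in the duality step. You assert that the Aubin--Nitsche argument yields the clean intermediate bound $\norm{\g_0-\g_{0,h}}_{L^2(\Omega_j')}\lesssim h^2 d_j^{-3/2}$. It does not, and cannot be established independently: testing with $\v$ supported in $\Omega_j'$ and pairing with the dual solution $(\w,\varphi)$, the near-field contribution is $h\norm{\nabla(\g_0-\g_{0,h})}_{L^2(\Omega_j''')}\norm{\w}_{H^2(\Omega)}$ --- exactly the quantity being estimated, so it can only be recycled into the $\varepsilon$ kick-back --- while the far-field contribution is controlled by $\norm{\nabla(\w-P_h(\w))}_{L^{\infty}(\Omega\setminus\Omega_j''')}\onenorm{\nabla(\g_0-\g_{0,h})}\lesssim h^{\alpha}d_j^{-1/2-\alpha}\onenorm{\nabla(\g_0-\g_{0,h})}$, i.e.\ it reintroduces the global $L^1$ norm that the lemma is about. (This is precisely the content of Theorem~\ref{theorem:g0_dual}; an analogous far-field term arises from pairing $\lambda_0-r_h(\lambda_0)$ with $\nabla\cdot(\w-P_h(\w))$, which additionally requires the $L^1$ interpolation estimate $\onenorm{\lambda_0-r_h(\lambda_0)}\le Ch\lnhh$ of Theorem~\ref{theorem:lambda_0_interpolation}, another ingredient your sketch omits.) After summing over $j$ one is therefore left with a term $CK^{-\alpha}\varepsilon^{-1}\onenorm{\nabla(\g_0-\g_{0,h})}$ on the right-hand side, where $K$ is the constant fixing the size of $\Omega_*$; the proof closes only because $\sum_{j=1}^{J}(h/d_j)^{\alpha}\le CK^{-\alpha}$ and one may choose $K$ large enough (after fixing $\varepsilon$) that this term is absorbed into the left-hand side of the full $L^1$ estimate. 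Your sketch does not mention this second absorption --- in $K$, distinct from the $\varepsilon$ kick-back --- and without it the argument does not close.
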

The weighted norm estimate follows essentially from Lemma \ref{theorem:g0_l1}. 
\begin{corollary}
	\label{corollary:g0_convergence}
	Let $(\g_0,\lambda_0)$ be the solution of \cref{eq:def_stoke_green0_1,eq:def_stoke_green0_2,eq:def_stoke_green0_3} and $(\g_{0,h},\lambda_{0,h})$ the respective discrete solution. Then, it holds
	\begin{equation}
	\twonorm{\sigma^{3/2}\nabla(\g_0-\g_{0,h})} \leq Ch \lnhh.
	\end{equation}
\end{corollary}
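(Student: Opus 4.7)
The strategy is to revisit the dyadic decomposition that drives the proof of Lemma \ref{theorem:g0_l1} and to observe that the weight $\sigma^{3/2}$ has the right size on each dyadic annulus to convert the per-annulus $L^2$ bounds already available there into the desired weighted estimate. The underlying point is that, on a ring of radius $d$ and thickness $d$ centered at $\x_0$, the factor $d^{3/2}$ produced by $\sigma^{3/2}$ is exactly the volume factor that interpolates $L^2$ and $L^1$ in three dimensions; consequently the $L^1$ bound of Lemma \ref{theorem:g0_l1} and the weighted $L^2$ bound stated here are derived from the same basic quantities.

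Concretely, let $d_j = 2^j \kappa h$ for $j=0,1,\dots,J$, with $J\sim \lnhh$ chosen so that $d_J\sim \mathrm{diam}(\Om)$, and define
\begin{equation*}
\Omega_0 = \{\x\in\Om : \abs{\x-\x_0}\le 2\kappa h\},\qquad \Omega_j = \{\x\in\Om : d_j\le \abs{\x-\x_0}\le d_{j+1}\} \quad\text{for } j\ge 1.
\end{equation*}
By construction $\sigma(\x)\sim d_j$ on $\Omega_j$, and $\abs{\Omega_j}\le C d_j^3$, so that
\begin{equation*}
\norm{\sigma^{3/2}\w}_{L^2(\Omega_j)} \le C d_j^{3/2} \norm{\w}_{L^2(\Omega_j)}
\end{equation*}
for any vector field $\w$. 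I would then set $a_j := C d_j^{3/2}\norm{\nabla(\g_0-\g_{0,h})}_{L^2(\Omega_j)}$ and reproduce the per-annulus bounds from the proof of Lemma \ref{theorem:g0_l1}: invoke Proposition \ref{proposition:local_energy_estimate} on a slightly enlarged annulus $\Omega_j'$, estimate the resulting approximation terms through Assumption \ref{assumption:approximation} together with the local $H^2$ bound of Corollary \ref{corollary:localh2}, and extract the correct scaling in $d_j$ from the pointwise Green's matrix estimates of Proposition \ref{proposition:Green_matrix}. The lower-order $L^2$ remainders from Proposition \ref{proposition:local_energy_estimate} are absorbed by a global duality argument as in the proof of Corollary \ref{corollary:g1_sigma}; the additional factor of $h$ compared to Corollary \ref{corollary:g1_sigma} arises because the right-hand side $\delta_h\vec e_i$ of \cref{eq:def_stoke_green0_1} is less singular by one derivative than $(\partial_{x_j}\delta_h)\vec e_i$ in \cref{eq:def_stoke_green1_1}, improving each $a_j$ by one power of $d_j$ via better regularity of $\g_0$.

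Given the per-annulus bounds with $\sum_{j=0}^{J} a_j \le Ch\lnhh$, the desired estimate follows from the elementary inequality $\sum_j a_j^2 \le \bigl(\sum_j a_j\bigr)^2$ for nonnegative $a_j$:
\begin{equation*}
\norm{\sigma^{3/2}\nabla(\g_0-\g_{0,h})}_{L^2(\Om)}^2 = \sum_{j=0}^{J} \norm{\sigma^{3/2}\nabla(\g_0-\g_{0,h})}_{L^2(\Omega_j)}^2 \le C\sum_{j=0}^{J} a_j^2 \le C\Bigl(\sum_{j=0}^{J} a_j\Bigr)^2 \le C h^2 \lnhh^2.
\end{equation*}
The main obstacle is not this summation step but rather establishing the bounds $a_j$ uniformly in $j$: the lower-order terms from Proposition \ref{proposition:local_energy_estimate} must be controlled without reintroducing an unfavourable power of $d_j^{-1}$ that would break the $\ell^1$ summability, and the innermost region $\Omega_0$ must be treated separately by a direct approximation argument using the $H^2$ regularity of $\g_0$ away from $\x_0$. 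Since this dyadic machinery is already required for Lemma \ref{theorem:g0_l1}, the weighted estimate genuinely comes as a byproduct rather than requiring independent work.
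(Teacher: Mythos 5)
Your proposal is correct and follows essentially the same route as the paper: the paper's proof likewise splits $\twonorm{\sigma^{3/2}\nabla(\g_0-\g_{0,h})}$ over the dyadic annuli, uses $\sigma\sim d_j$ on $\Omega_j$ to reduce each piece to the quantity $M_j=d_j^{3/2}\norm{\nabla(\g_0-\g_{0,h})}_{L^2(\Omega_j)}$ already controlled in the proof of Lemma \ref{theorem:g0_l1}, and then invokes that lemma to absorb the remaining $\onenorm{\nabla(\g_0-\g_{0,h})}$ term. Your final summation via $\sum_j a_j^2\le(\sum_j a_j)^2$ is just a cosmetic variant of the paper's triangle inequality over the decomposition.
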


As mentioned before, the proof is based on local and global max-norm estimates for the Ritz projection $R_h\vec z$ of $\vec z\in H^1_0(\Omega)^3$ which is given by
\begin{equation}
(\nabla R_h \vec z, \nabla \v_h) = (\nabla \vec z, \nabla \v_h) \quad \forall \v_h \in \V_h.
\end{equation}
We state the slightly modified results \cite[Theorem 12]{MR3470741} and \cite[Theorem 4.4]{MR3614014} for the convenience of the reader.
\begin{proposition}
	\label{proposition:ritz_projection}
	There exists a constant $C$ independent of $h$ such that, for $\vec z \in H^1_0(\Omega)^3\cap L^{\infty}(\Omega)^3$ the solution of the Laplace equation, it holds that
	\begin{equation}
	\inftynorm{R_h\vec z} \leq C\abs{\ln h} \inftynorm{\vec z}.
	\end{equation}
\end{proposition}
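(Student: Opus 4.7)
Since the Laplace equation decouples componentwise, $R_h$ acts on each component of $\vec z$ separately, so it suffices to prove the scalar version $\inftynorm{R_h z}\leq C\lnhh\inftynorm{z}$ for $z\in H^1_0(\Omega)\cap L^\infty(\Omega)$, where $R_h$ is the scalar Ritz projection onto the scalar finite element space underlying $\V_h$, which I continue to denote $V_h$ below.

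My plan is to obtain this stability directly by duality with a regularized Green's function, in the spirit of the arguments used elsewhere in this paper. Fix $\vec x_0\in\bar\Omega$ and an element $T_{\vec x_0}\in\Th$ containing it, and use the regularized delta function $\delta_h$ from \cref{eq:def_delta} and \cref{eq:deltah_est}. Let $g\in H^1_0(\Omega)$ solve $-\Delta g=\delta_h$ in $\Omega$, and let $g_h\in V_h$ be its scalar Ritz projection. Since $R_h z$ lies in $V_h$, the property \cref{eq:def_delta} of $\delta_h$, integration by parts in $g$, Galerkin orthogonality of $g-g_h$ tested against $R_h z\in V_h$, and the Ritz property of $R_h z$ tested against $g_h\in V_h$ combine to give
\begin{equation*}
R_h z(\vec x_0)=(R_h z,\delta_h)=(\nabla R_h z,\nabla g)=(\nabla R_h z,\nabla g_h)=(\nabla z,\nabla g_h).
\end{equation*}
Integrating by parts elementwise on the right-hand side and using $z=0$ on $\partial\Omega$, I then obtain
\begin{equation*}
\abs{R_h z(\vec x_0)}\leq\inftynorm{z}\Big(\sum_{T\in\Th}\norm{\Delta g_h}_{L^1(T)}+\sum_{F}\norm{[\partial_{\vec n}g_h]}_{L^1(F)}\Big),
\end{equation*}
where $F$ ranges over interior faces of $\Th$ and the brackets denote the jump of the normal derivative across $F$.

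The main obstacle is to show that the bracketed quantity is bounded by $C\lnhh$ uniformly in $\vec x_0$ and $h$. Heuristically, the Laplacian Green's function in three dimensions satisfies $\abs{\nabla^2 g(\vec x)}\leq C\abs{\vec x-\vec x_0}^{-3}$ away from $\vec x_0$, which makes $\norm{\nabla^2 g}_{L^1(\Omega)}$ diverge only logarithmically in the inner cutoff $\kappa h$. This bound transfers to the elementwise Laplacian of $g_h$ and to the normal-derivative jumps via weighted $L^2$ estimates for $g-g_h$ on dyadic annuli around $\vec x_0$, summed over $O(\lnhh)$ layers partitioning $\Omega$. These weighted estimates rely on corner-edge regularity of the Laplace Green's function on a convex polyhedron together with the super-approximation properties of $\V_h$, and constitute the scalar analogue of the arguments the paper develops for the Stokes system; they are established in \cite{MR3470741,MR3614014}, so my plan is to invoke the scalar theory from those works rather than reproduce the dyadic weighted analysis here.
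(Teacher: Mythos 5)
Your proposal is consistent with the paper, which gives no proof of this proposition at all: it is quoted verbatim (in slightly modified form) from \cite[Theorem 12]{MR3470741} and \cite[Theorem 4.4]{MR3614014}, and your outline --- componentwise reduction, duality with a regularized Green's function for the Laplacian, and the $L^1$ bound on the elementwise Laplacian and normal-derivative jumps of $g_h$ --- is essentially a sketch of the argument in those references, with the substantive dyadic weighted estimates deferred to the same sources the paper cites. Since both you and the paper ultimately rest on \cite{MR3470741,MR3614014}, this is acceptable and matches the paper's treatment.
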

\begin{proposition}
	\label{proposition:local_ritz_projection}
	Let $D \subset D_d \subset \Omega$, where $D_d=\{x\in \Omega : \text{dist}(x,D)\leq d\}$. Then, for $\vec z \in H^1_0(\Omega)^3\cap L^{\infty}(\Omega)^3$ the solution of the Laplace equation, there exists a constant $C$, independent of $h$, such that
	\begin{equation}
	\norm{R_h \vec z}_{L^{\infty}(D)} \leq \lnhh \norm{\vec z}_{L^{\infty}(D_d)} + C_{d}h\norm{\vec z}_{H^1(\Omega)},
	\end{equation}
	where $C_{d}\sim d^{-3/2}$.
\end{proposition}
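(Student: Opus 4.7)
The plan is to combine the global $L^{\infty}$ stability of the Ritz projection (\Cref{proposition:ritz_projection}) with a regularized Green's function argument that localizes the far component of $\vec z$, mirroring the classical interior max-norm technique for scalar elliptic problems. First I would introduce a smooth cutoff $\omega\in C_c^{\infty}(\Omega)$ with $\omega\equiv 1$ on $D_{d/2}$, $\omega\equiv 0$ on $\Omega\setminus D_d$, and $|\nabla^k\omega|\leq Cd^{-k}$, and split by linearity of the Ritz projection
\[
R_h\vec z = R_h(\omega\vec z) + R_h((1-\omega)\vec z).
\]

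For the near piece, since $\omega\vec z\in H^1_0(\Omega)^3\cap L^{\infty}(\Omega)^3$ with $\|\omega\vec z\|_{L^{\infty}(\Omega)}\leq \|\vec z\|_{L^{\infty}(D_d)}$, a direct application of \Cref{proposition:ritz_projection} will give
\[
\|R_h(\omega\vec z)\|_{L^{\infty}(D)}\leq \|R_h(\omega\vec z)\|_{L^{\infty}(\Omega)}\leq C\lnhh\|\vec z\|_{L^{\infty}(D_d)},
\]
which accounts for the first term in the stated bound.

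The far piece $\vec w:=(1-\omega)\vec z$ vanishes on $D_{d/2}\supset D$ and I would treat it via a Green's function representation. Fix $\vec x_0\in D$ and $i\in\{1,2,3\}$, and introduce the continuous regularized Green's function $G^{\vec x_0,i}\in H^1_0(\Omega)^3$ for the Laplacian with source $\delta_h\vec e_i$ ($\delta_h$ as in \cref{eq:def_delta}), together with its Ritz projection $G_h^{\vec x_0,i}\in V_h$. Using the Galerkin property of $R_h$ and the vanishing of $\vec w$ on $T_{\vec x_0}\subset D$,
\[
(R_h\vec w)_i(\vec x_0)= (\nabla R_h\vec w,\nabla G_h^{\vec x_0,i})= (\nabla\vec w,\nabla G_h^{\vec x_0,i})= (\nabla\vec w,\nabla(G_h^{\vec x_0,i} - G^{\vec x_0,i})),
\]
where the last equality comes from $(\nabla\vec w,\nabla G^{\vec x_0,i})=(\vec w,\delta_h\vec e_i)=0$. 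Since $\mathrm{supp}\,\nabla\vec w\subset\Omega\setminus D_{d/2}$, where the weight $\sigma(x):=|x-\vec x_0|$ is bounded below by $d/2$, a weighted Cauchy--Schwarz, combined with $\|\nabla\vec w\|_{L^2(\Omega)}\leq Cd^{-1}\|\vec z\|_{H^1(\Omega)}$ and a weighted Ritz error bound for the scalar Laplace Green's function (analogous in spirit to \Cref{corollary:g0_convergence}), will yield $|(R_h\vec w)_i(\vec x_0)|\leq Chd^{-3/2}\|\vec z\|_{H^1(\Omega)}$, giving the second term of the estimate with $C_d\sim d^{-3/2}$.

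The main obstacle will be the sharp weighted Ritz error estimate for the Laplace Green's function with the correct $d$-exponent; this requires a dyadic decomposition of $\Omega\setminus D$ into annuli around $\vec x_0$, local energy estimates, and the $L^2$ approximation properties of $V_h$. For the scalar Laplace problem these tools are classical and parallel, in a simpler scalar form, the arguments that will be carried out in \Cref{section:duality_interpolation} for the more delicate Stokes regularized Green's functions.
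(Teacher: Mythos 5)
The paper does not actually prove this proposition: it is imported verbatim (``slightly modified'') from the cited references, where it is established by the Schatz--Wahlbin-type interior max-norm machinery --- a dyadic decomposition of $\Omega$ centered at the evaluation point $\x_0$, local energy estimates for the Ritz projection, and sharp $L^1$/weighted bounds for the regularized Green's function of the Laplacian; that route produces the pollution term in the form $Cd^{-3/2}\twonorm{\vec z-R_h\vec z}$, and the factor $d^{-3/2}$ arises solely from the Cauchy--Schwarz step $\norm{\cdot}_{L^1(A)}\le |A|^{1/2}\norm{\cdot}_{L^2(A)}$ on the innermost annulus of radius $\sim d$, after which $\twonorm{\vec z-R_h\vec z}\le Ch\norm{\vec z}_{H^1(\Omega)}$ gives the stated second term. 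Your route (cutoff $+$ global log-stability for the near field, Green's-function duality with the weight $\sigma^{3/2}$ for the far field) is a legitimate and genuinely different derivation, and its structure is sound: the near-field bound via \Cref{proposition:ritz_projection} is correct, and the identity $(R_h\vec w)_i(\x_0)=(\nabla\vec w,\nabla(G_h^{\x_0,i}-G^{\x_0,i}))$ is valid once one notes $T_{\x_0}\subset D_{d/2}$, which silently requires $d\gtrsim h$ (this is implicit in the paper's standing assumption $d\ge\bar\kappa h$ but you should state it).

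There is, however, a concrete quantitative gap in the far-field step. From $\norm{\sigma^{-3/2}\nabla\vec w}_{L^2(\Omega\setminus D_{d/2})}\le (d/2)^{-3/2}\twonorm{\nabla\vec w}$ and your own bound $\twonorm{\nabla\vec w}\le Cd^{-1}\norm{\vec z}_{H^1(\Omega)}$ (the $d^{-1}$ being unavoidable because $\nabla\vec w$ contains the term $\vec z\otimes\nabla\omega$), the chain of inequalities yields $C_d\sim d^{-5/2}$, not $d^{-3/2}$; splitting $\nabla\vec w=(1-\omega)\nabla\vec z-\vec z\otimes\nabla\omega$ repairs the exponent only for the first piece. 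In addition, the weighted Ritz error bound you invoke for the scalar regularized Green's function is, in the analogous Stokes case (\Cref{corollary:g0_convergence}), of size $Ch\lnhh$ rather than $Ch$, so your second term is likely to carry an extra logarithm as well. Neither defect matters for the way \Cref{proposition:local_ritz_projection} is used later in the paper, where $C_d$ is only required to be ``some constant depending on $d$,'' but as a proof of the proposition as stated --- with $C_d\sim d^{-3/2}$ and no logarithm on the $h\norm{\vec z}_{H^1(\Omega)}$ term --- the argument falls short, and closing the gap essentially forces you back to the dyadic argument of the cited proof.
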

We will also require the following result.
\begin{lemma}
	\label{lemma:lambda0}
	Let $(\g_0,\lambda_0)$ be the solution of \cref{eq:def_stoke_green0_1,eq:def_stoke_green0_2,eq:def_stoke_green0_3}. Then, it holds
	\begin{equation}
	\onenorm{\nabla \lambda_0} \leq C\lnhh^{1/2} \twonorm{\sigma^{3/2}\nabla \lambda_0} \leq C\lnhh.
	\end{equation}
\end{lemma}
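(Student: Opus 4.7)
The plan is to prove the two inequalities separately. The first follows from a weighted Cauchy--Schwarz inequality, while the second is proved by splitting $\Omega$ into an inner ball $B_{\kappa h}(\x_0)$ around $\x_0$ and its complement, handled with global $H^2\times H^1$ regularity and pointwise Green's matrix estimates, respectively.

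For the first inequality, I apply Cauchy--Schwarz with the weight $\sigma^{3/2}$:
\begin{equation*}
\onenorm{\nabla\lambda_0} = \int_\Omega \sigma^{-3/2}\cdot\sigma^{3/2}\abs{\nabla\lambda_0}\,d\x \leq \twonorm{\sigma^{-3/2}}\,\twonorm{\sigma^{3/2}\nabla\lambda_0}.
\end{equation*}
Since $\Omega\subset B_R(\x_0)$ for some fixed $R$, passing to polar coordinates around $\x_0$ yields
\begin{equation*}
\twonorm{\sigma^{-3/2}}^2 = \int_\Omega\bigl(\abs{\x-\x_0}^2+(\kappa h)^2\bigr)^{-3/2}d\x \leq C\int_0^R \frac{r^2\,dr}{(r^2+(\kappa h)^2)^{3/2}} \leq C\lnhh,
\end{equation*}
which establishes the first inequality in the lemma.

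For the second inequality, on the inner region $B_{\kappa h}(\x_0)$ I use $\sigma\leq\sqrt{2}\,\kappa h$ together with the global regularity bound \cref{eq:H2_estimate} applied to $(\g_0,\lambda_0)$ with data $\delta_h\vec e_i$ and $\twonorm{\delta_h}\leq Ch^{-3/2}$ from \cref{eq:deltah_est} to get
\begin{equation*}
\int_{B_{\kappa h}(\x_0)}\sigma^3\abs{\nabla\lambda_0}^2\,d\x \leq C(\kappa h)^3\twonorm{\nabla\lambda_0}^2 \leq Ch^3\cdot h^{-3} = C.
\end{equation*}
On the outer region I use the Green's matrix representation \cref{eq:greens_matrix_2} to write $\lambda_0(\x)=\int_\Omega\delta_h(\vec\xi)G_{i,4}(\vec\xi,\x)\,d\vec\xi$, so that \Cref{proposition:Green_matrix} with $\abs{\theta}=1$, $\abs{\beta}=0$ gives $\abs{\nabla_\x G_{i,4}(\vec\xi,\x)}\leq C\abs{\x-\vec\xi}^{-3}$. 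For $\abs{\x-\x_0}\geq\kappa h$ with $\kappa$ large enough, any $\vec\xi\in\operatorname{supp}\delta_h\subset T_{\x_0}$ satisfies $\abs{\x-\vec\xi}\geq\tfrac12\abs{\x-\x_0}$, so combined with $\onenorm{\delta_h}\leq C$ this yields the pointwise bound $\abs{\nabla\lambda_0(\x)}\leq C\abs{\x-\x_0}^{-3}$. A dyadic decomposition $\Omega_j=\{2^{j-1}\kappa h\leq\abs{\x-\x_0}<2^j\kappa h\}\cap\Omega$ for $j=1,\dots,J$ with $J\lesssim\lnhh$ then gives, on each annulus, $\int_{\Omega_j}\sigma^3\abs{\nabla\lambda_0}^2\,d\x\leq C(2^j h)^3(2^j h)^{-6}(2^j h)^3=C$, and summing in $j$ produces $\twonorm{\sigma^{3/2}\nabla\lambda_0}^2\leq C\lnhh$.

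The only (mild) obstacle is justifying that the representation formula \cref{eq:greens_matrix_2} actually applies to $(\g_0,\lambda_0)$: one must choose the auxiliary function $\bar\phi$ of \Cref{section:continuousproblem} so that $\int_\Omega\lambda_0\,\bar\phi\,d\x=0$. Since $\delta_h$ is smooth and compactly supported in $T_{\x_0}$, $\lambda_0$ is H\"older continuous in the interior so the construction there goes through, and $\bar\phi$ can be taken supported in a fixed subdomain away from $\x_0$ so that its presence does not disturb the kernel decay on the outer annuli. Combining the $O(1)$ inner contribution with the $O(\lnhh)$ outer contribution gives $\twonorm{\sigma^{3/2}\nabla\lambda_0}\leq C\lnhh^{1/2}$, completing the chain of inequalities in the lemma.
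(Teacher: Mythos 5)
Your overall architecture (weighted Cauchy--Schwarz for the first inequality, inner ball via global $H^2\times H^1$ regularity, dyadic annuli via Green's matrix decay for the second) matches the paper's, and the first inequality and the inner-region estimate are correct. The gap is in the outer region: you bound $\abs{\nabla_{\x}\lambda_0(\x)}$ pointwise by differentiating the representation $\lambda_0(\x)=\int_\Omega\delta_h(\vec\xi)\,G_{i,4}(\vec\xi,\x)\,d\vec\xi$ under the integral and invoking \Cref{proposition:Green_matrix} ``with $\abs{\theta}=1$, $\abs{\beta}=0$''. But the derivative you need falls on the \emph{second} argument of $G_{i,4}$, i.e.\ it is a $\beta$-derivative of a kernel entry whose column index is $j=4$, and the proposition only covers $\abs{\beta}\le 1-\delta_{j,4}=0$. (The same obstruction appears if you use the symmetry $G_{i,4}(\vec\xi,\x)=G_{4,i}(\x,\vec\xi)$: then the derivative is a $\theta$-derivative with row index $4$, again excluded.) This exclusion is not an accident of bookkeeping: on a polyhedron the gradient of the pressure components of the Green's matrix does not admit a uniform $\abs{\x-\vec\xi}^{-3}$ bound up to the edges, and the annuli $\Omega_j$ with large $d_j$ do reach the boundary. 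So the claimed pointwise bound $\abs{\nabla\lambda_0(\x)}\le C\abs{\x-\x_0}^{-3}$ is not justified, and the per-annulus estimate $\int_{\Omega_j}\sigma^3\abs{\nabla\lambda_0}^2\le C$ is left without proof.

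The paper circumvents exactly this: it only extracts from \Cref{proposition:Green_matrix} the \emph{underivated} bound $\abs{\lambda_0(\x)}\le Cd_j^{-2}$ (together with $\abs{\nabla\g_0}+d_j^{-1}\abs{\g_0}\le Cd_j^{-2}$, all within the admissible range of indices; this is \Cref{lemma: Greens function dyadic}), and then upgrades to $\norm{\nabla\lambda_0}_{L^2(\Omega_j)}\le Cd_j^{-3/2}$ by the interior $H^2\times H^1$ stability estimate \Cref{corollary:localh2} applied on $\Omega_j\subset\Omega_j'$ with $d\sim d_j$ (this is \Cref{cor: g0_and_lambda_0}). Note that $d_j^3\cdot\bigl(d_j^{-3/2}\bigr)^2=1$ per annulus, so this route reproduces exactly the $CJ\le C\lnhh$ count you were aiming for. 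To repair your argument, replace the pointwise gradient bound on $\lambda_0$ by this two-step detour (pointwise bounds on $\lambda_0$, $\g_0$, $\nabla\g_0$ only, then local elliptic regularity); the rest of your proof then goes through.
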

The respective proof is given in \Cref{section:duality_interpolation}.

\subsubsection{Max-norm estimate}
With these tools at hand, we can go ahead with the proof of the theorem.
\begin{proof}[Proof of \Cref{theorem:Linfty_nonzerodivergence} (velocity)]
	We make the ansatz for $\x_0 \in \bar \Omega$
	\begin{align}
	\u_{h,i}(\x_0) = a((\u_h,p_h),(\g_{0,h},\lambda_{0,h}))
	&= a((\u,p),(\g_{0,h},\lambda_{0,h})) \tag{by orthogonality} \\
	&= (\nabla \u, \nabla \g_{0,h}) - (p, \nabla \cdot \g_{0,h}).
	\end{align}
	Since $\g_{0,h} \in \V_h$ we have $(\nabla \u, \nabla \g_{0,h}) = (\nabla R_h\u,\nabla \g_{0,h})$
	and hence by using $\nabla \cdot \g_0 =0$
	\begin{equation}
	\u_{h,i}(\x_0) = (\nabla R_h \u, \nabla \g_{0,h}) - (p, \nabla \cdot \g_{0,h})= (\nabla R_h \u, \nabla \g_{0,h}) - (p, \nabla \cdot (\g_{0,h} - \g_0)) \label{eq:intermediate_step1}.
	\end{equation}
	We can use an inverse estimate on $\nabla R_h \u$. Thus,
	\begin{align}
	(\nabla R_h \u, \nabla \g_{0,h}) &= (\nabla R_h \u, \nabla (\g_{0,h}-\g_0)) - (R_h \u, \Delta \g_0) \\
	&= (\nabla R_h \u, \nabla (\g_{0,h}-\g_0)) - (R_h \u, -\delta_h\vec{e_i} + \nabla \lambda_0 )\\
	&\leq h^{-1}\inftynorm{R_h \u}\onenorm{\nabla (\g_{0,h}-\g_0)} \\
	&\quad 
	+ C\inftynorm{R_h \u}\big(1 + \onenorm{\nabla \lambda_0}\big).
	\end{align}
	For the second term, we get by estimating the divergence by the gradient:
	\begin{equation}
	(p, \nabla \cdot (\g_{0,h} - \g_0)) \leq C \inftynorm{p} \onenorm{\nabla (\g_{0,h} -\g_0)}.
	\end{equation}
	Now we can apply our auxiliary result for $\onenorm{\nabla  (\g_{0,h} - \g_0)}$. Thus, we have by \Cref{theorem:g0_l1} combined with \Cref{proposition:ritz_projection} and \Cref{lemma:lambda0}
	\begin{align}
	\abs{\u_{h,i}(\x_0)} &\leq C\abs{\ln h}\inftynorm{\u}h^{-1}\onenorm{\nabla (\g_{0,h}-\g_0)}+\inftynorm{p} \onenorm{\nabla (\g_{0,h} -\g_0)}\\
	&\leq C\Big(\abs{\ln h}^2\inftynorm{\u}+ \lnhh h\inftynorm{p}\Big).
	\end{align}
\end{proof}

\subsubsection{Localization}
The approach for the localization in the $L^{\infty}$ case is similar to  $W^{1,\infty}$ but different in the sense that we again use the stability of $R_h$ in $L^\infty$ norm.
\begin{proof}[Proof of \Cref{theorem:Linfty_localization} (velocity)]	
	We only consider $\x_0 \in T_{\x_0}\subset D_1$.
	As before, using \cref{eq:def_stoke_green0_fem,eq:def_stoke_fem,eq:def_stoke_fem_equation} gives
	\begin{align}
	\u_{h,i}(\x_0) &= a((\u_h,p_h),(\g_{0,h},\lambda_{0,h})) 
	= a((\u,p),(\g_{0,h},\lambda_{0,h})) \tag{by orthogonality} \\
	&= (\nabla \u, \nabla \g_{0,h}) - (p, \nabla \cdot \g_{0,h})
	:= I_1 + I_2.
	\end{align}
	Using the properties of the Ritz projection we first consider 
	\begin{align}
	I_1 &= (\nabla R_h\u, \nabla \g_{0,h})\\
	&= (\nabla R_h\u, \nabla \g_{0}) + (\nabla R_h\u, \nabla (\g_{0,h}- \g_{0}))\\
	&= -(R_h\u, \Delta \g_{0}) + (\nabla R_h\u, \nabla (\g_{0,h}- \g_{0}))\\
	&= (R_h\u,\delta_h\vec{e_i} - \nabla \lambda_0) + (\nabla R_h\u, \nabla (\g_{0,h}- \g_{0}))
	\end{align}
	Next, we apply \cref{eq:def_delta} and split the domain into $D_2$ and $\Omega \backslash D_2$
	\begin{align}
	I_1&\leq \norm{R_h\u}_{L^{\infty}(T_{\x_0})} + \norm{R_h\u}_{L^{\infty}(D_2)}\onenorm{\nabla \lambda_0} + \norm{\nabla R_h\u}_{L^{\infty}(D_2)}\onenorm{\nabla(\g_{0,h}- \g_{0})}\\
	&\quad + \norm{\sigma^{-3/2}R_h\u}_{L^2(\Omega\backslash D_2)}\twonorm{\sigma^{3/2}\nabla\lambda_0} \\
	&\quad + \norm{\sigma^{-3/2}\nabla R_h\u}_{L^2(\Omega\backslash D_2)}\twonorm{\sigma^{3/2}\nabla(\g_{0,h}- \g_{0})}.
	\end{align}
	Using the properties of $\sigma$ and applying an inverse inequality gives
	\begin{align}
	I_1&\leq C\norm{ R_h\u}_{L^{\infty}(D_2)}\big(1 + \onenorm{\nabla \lambda_0} +h^{-1}\onenorm{\nabla(\g_{0,h}- \g_{0})}\big)\\
	&\quad + C_d \twonorm{R_h\u}\big(\twonorm{\sigma^{3/2}\nabla\lambda_0} + h^{-1}\twonorm{\sigma^{3/2}\nabla(\g_{0,h}- \g_{0})}\big).
	\end{align}
	To estimate $R_h\u$ in the $L^{\infty}$ and $L^2$ norm we can apply \Cref{proposition:local_ritz_projection} and an estimate for $\twonorm{R_h\u - \u}$ %
	to see together with \Cref{theorem:g0_l1}, \Cref{corollary:g0_convergence} and \Cref{lemma:lambda0} that
	\begin{align}
	I_1 &\leq C\lnhh \norm{\u}_{L^{\infty}(D_2)}(1+\lnhh) + C_d \lnhh \Big(\twonorm{\u} + h \norm{\u}_{H^1(\Omega)}\Big)\\
	&\leq C_d\lnhh^2 \norm{\u}_{L^{\infty}(D_2)} + C_d \lnhh \Big(\twonorm{\u} + h \norm{\u}_{H^1(\Omega)}\Big).
	\end{align}
	Using similar arguments we get for
	\begin{align}
	I_2 &= - (p, \nabla \cdot (\g_{0,h}-\g_0))\\
	&\leq C\norm{p}_{L^{\infty}(D_2)}\onenorm{\nabla(\g_{0,h}- \g_{0})}
	+ C_d \twonorm{p}\twonorm{\sigma^{3/2}\nabla(\g_{0,h}- \g_{0})}\\
	&\leq C\lnhh \norm{p}_{L^{\infty}(D_2)} + C_d \lnhh\twonorm{p},
	\end{align}
	which concludes the proof of the theorem.
\end{proof}

\section{Estimates for the regularized Green's function}
\label{section:duality_interpolation}
In this section we prove \Cref{corollary:g1_sigma,corollary:g0_convergence} and \Cref{lemma:lambda0,theorem:g0_l1} which we need in order to establish the main theorems.

\subsection{Dyadic decomposition}
For the proof of our results, we use a dyadic decomposition of the domain $\Omega$, which we will introduce next. Without loss of generality, we assume that the diameter of $\Omega$ is less than $1$. We put $d_j=2^{-j}$ and consider the decomposition $\Omega = \Omega_* \cup \bigcup_{j=0}^{J}\Omega_j$, where 
\begin{equation}
\Omega_* = \{\x \in \Omega : \abs{\x - \x_0}\leq K h \},\qquad
\Omega_j = \{\x \in \Omega : d_{j+1} \leq \abs{\x - \x_0} \leq d_j \},
\end{equation}
$K$ is a sufficiently large constant to be chosen later and $J$ is an integer such that 
\begin{equation}
2^{-(J+1)}\leq K h \leq 2^{-J}.\label{eq:dydic_J}
\end{equation} 
We keep track of the explicit dependence on $K$.
Furthermore, we consider the following enlargements of $\Omega_j$
\begin{align}
\Omega_j' &= \{\x \in \Omega : d_{j+2} \leq \abs{\x -\x_0} \leq d_{j-1} \},\\
\Omega_j'' &= \{\x \in \Omega : d_{j+3} \leq \abs{\x -\x_0} \leq d_{j-2} \},\\
\Omega_j''' &= \{\x \in \Omega : d_{j+4} \leq \abs{\x -\x_0} \leq d_{j-3} \}.
\end{align}

\begin{lemma}\label{lemma: Greens function dyadic}
	There exists a constant $C$ independent of $d_j$ such that for any $\x\in \Om_j$,
	$$
	|\nabla \g_0(\x)|+d_j^{-1}| \g_0(\x)|+|\lambda_0(\x)|\le Cd_j^{-2}.
	$$
\end{lemma}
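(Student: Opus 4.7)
The plan is to invoke the Green's matrix representation~\cref{eq:greens_matrix_2} with right-hand side $\f=\delta_h \vec e_i$ (the orthogonality condition~\cref{eq:cond_pressure} is secured by the construction of $\bar\phi$ carried out immediately after that equation), which gives the explicit formulas
\begin{equation}
\g_{0,k}(\x)=\int_\Omega \delta_h(\vec\xi)\, G_{i,k}(\vec\xi,\x)\,d\vec\xi\ \ (k=1,2,3),\qquad \lambda_0(\x)=\int_\Omega \delta_h(\vec\xi)\, G_{i,4}(\vec\xi,\x)\,d\vec\xi.
\end{equation}
Differentiation under the integral sign then reduces the three quantities we need to bound to integrals of $\delta_h$ against $G_{i,k}$, $\nabla_x G_{i,k}$ and $G_{i,4}$ respectively, all evaluated at source points $\vec\xi$ in the support of $\delta_h$ and field points $\x\in \Omega_j$.

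Next I would apply the pointwise kernel bounds of \Cref{proposition:Green_matrix}. With $|\theta|,|\beta|\le 1-\delta_{i,4}-\delta_{j,4}$ taken appropriately in each case, one reads off
\begin{equation}
|G_{i,k}(\vec\xi,\x)|\le C|\vec\xi-\x|^{-1},\quad |\nabla_x G_{i,k}(\vec\xi,\x)|\le C|\vec\xi-\x|^{-2},\quad |G_{i,4}(\vec\xi,\x)|\le C|\vec\xi-\x|^{-2},
\end{equation}
for $i,k\in\{1,2,3\}$ and $\vec\xi\neq \x$.

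The geometric input is that $\supp \delta_h\subset T_{\x_0}$, which has diameter $\lesssim h$, so for $\x\in \Omega_j$ and $\vec\xi\in T_{\x_0}$ one has $|\vec\xi-\x|\ge |\x-\x_0|-|\vec\xi-\x_0|\ge d_{j+1}-Ch$. Because $j\le J$ and the dyadic cutoff~\cref{eq:dydic_J} ensures $d_{j}\ge d_J\gtrsim Kh$, fixing the constant $K$ in the definition of $\Omega_*$ sufficiently large (depending only on $C$) gives $|\vec\xi-\x|\sim d_j$ uniformly in $\vec\xi\in T_{\x_0}$ and $\x\in \Omega_j$. Combined with the $L^1$ bound $\|\delta_h\|_{L^1(\Omega)}\le C$, which is the $k=0$, $q=1$ case of \cref{eq:deltah_est}, this yields $|\g_0(\x)|\le C d_j^{-1}$, $|\nabla \g_0(\x)|\le C d_j^{-2}$ and $|\lambda_0(\x)|\le C d_j^{-2}$, from which the claim follows by summation.

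The bulk of this argument is mechanical once the representation formula is in place; the only delicate point is making sure the dyadic constant $K$ is chosen large enough (depending on the constants appearing in \Cref{proposition:Green_matrix} and on the support size of $\delta_h$) so that source and field points of the Green's matrix remain uniformly separated by a distance comparable to $d_j$. I would simply fix such a $K$ at the outset of the proof so that every subsequent use of the decomposition is consistent with this choice.
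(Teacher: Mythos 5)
Your proposal is correct and follows essentially the same route as the paper's proof: the Green's matrix representation \cref{eq:greens_matrix_2} applied to $\f=\delta_h\vec e_i$, the pointwise kernel bounds of \Cref{proposition:Green_matrix}, the observation that $|\vec\xi-\x|\sim d_j$ for $\vec\xi\in T_{\x_0}$ and $\x\in\Omega_j$, and the bound $\|\delta_h\|_{L^1}\le C$. Your extra care in choosing $K$ large enough to guarantee the uniform separation is a sound (and slightly more explicit) version of the paper's remark that $\mathrm{dist}(\x_0,\Omega_j)\ge Cd_j$.
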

\begin{proof}
	Due to \cref{eq:greens_matrix_2} and \Cref{proposition:Green_matrix}, it holds for $\x \in \Omega_j$
	\begin{align}
	\abs{ \lambda_0(\x)} &= \Big\vert\int_{\Omega}  G_{4}(\x,\y)\cdot \delta_h(\y)\vec e_i d\y\Big\vert
	\leq \int_{T_{\x_0}} \abs{ G_{i,4}(\x,\y)}\abs{\delta_h(\y)}d\y\\
	&\leq C \int_{T_{\x_0}} \frac{\abs{\delta_h (\y)}}{\abs{\x-\y}^2}d\y\leq C d^{-2}_j \onenorm{\delta_h} \leq Cd_j^{-2},
	\end{align}
	where we used that $dist(x_0,\Om_j)\geq Cd_j$.
	Similarly,  without loss of generality, considering the $k$-th component, $1\leq k \leq 3$, we have for
	\begin{align}
	\abs{\partial_x \g_{0,k}(\x)} 
	= \Big\vert\int_{\Omega} \partial_x G_{k}(\x,\y)\cdot \delta_h (\y)\vec e_i d\y\Big\vert
	&\leq \int_{T_{\x_0}} \abs{\partial_x G_{i,k}(\x,\y)}\abs{\delta_h (\y)} d\y\\
	&\leq \int_{T_{\x_0}} \frac{\abs{\delta_h(\y)}}{\abs{\x-\y}^{2}}d\y \leq Cd_j^{-2}.
	\end{align}
	The estimate for $\g_{0,k}(\x)$ is similar. 
\end{proof}
As an immediate application of the above result and \Cref{corollary:localh2} we obtain the following result. 
\begin{corollary}\label{cor: g0_and_lambda_0}
	$$
	\norm{\vec{g}_0}_{H^2(\Omega_j)}+\norm{\nabla\lambda_0}_{L^2(\Omega_j)}\le Cd_j^{-3/2}.
	$$
\end{corollary}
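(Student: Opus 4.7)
The plan is to apply the local $H^2$ stability estimate of Corollary \ref{corollary:localh2} to the pair $(\vec g_0,\lambda_0)$, using the dyadic annulus $\Omega_j$ as the inner set and an enlargement (for instance $\Omega_j'$) as the outer set, with separation $d\sim d_j$. The needed $L^2$-norm quantities on $\Omega_j'$ are then produced by integrating the pointwise bounds from Lemma \ref{lemma: Greens function dyadic} against the volume $|\Omega_j'|\le Cd_j^3$.

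First I would write
\begin{equation*}
\norm{\vec g_0}_{H^2(\Omega_j)}+\norm{\lambda_0}_{H^1(\Omega_j)}\le C\Bigl(\norm{\delta_h\vec e_i}_{L^2(\Omega_j')}+\tfrac{1}{d_j}\norm{\nabla\vec g_0}_{L^2(\Omega_j')}+\tfrac{1}{d_j^2}\norm{\vec g_0}_{L^2(\Omega_j')}+\tfrac{1}{d_j}\norm{\lambda_0}_{L^2(\Omega_j')}\Bigr),
\end{equation*}
noting that $\vec g_0$ vanishes on $\partial\Omega$ and that the covering argument behind Corollary \ref{corollary:localh2} applies to the annular region $\Omega_j'$ without any convexity assumption on it. Since Lemma \ref{lemma: Greens function dyadic} is proven on each dyadic shell and the enlargement $\Omega_j'$ is a union of a bounded number of neighboring shells of comparable diameter, the same pointwise bounds $|\nabla\vec g_0|+d_j^{-1}|\vec g_0|+|\lambda_0|\le Cd_j^{-2}$ hold on $\Omega_j'$. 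Combined with $|\Omega_j'|\le Cd_j^3$ this yields
\begin{equation*}
\norm{\nabla\vec g_0}_{L^2(\Omega_j')}\le Cd_j^{-1/2},\qquad \norm{\vec g_0}_{L^2(\Omega_j')}\le Cd_j^{1/2},\qquad \norm{\lambda_0}_{L^2(\Omega_j')}\le Cd_j^{-1/2}.
\end{equation*}

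Substituting into the local $H^2$ bound produces the desired $Cd_j^{-3/2}$, since each of the three geometric terms contributes at most $Cd_j^{-3/2}$ and the forcing term $\delta_h\vec e_i$ either vanishes on $\Omega_j'$ (when $d_j$ is large enough relative to $h$ so that $T_{\vec x_0}$ is separated from $\Omega_j'$) or is at most $\norm{\delta_h}_{L^2(T_{\vec x_0})}\le Ch^{-3/2}\le Cd_j^{-3/2}$ in the remaining finite range $d_j\sim h$ by \eqref{eq:deltah_est}. Finally, $\norm{\nabla\lambda_0}_{L^2(\Omega_j)}\le \norm{\lambda_0}_{H^1(\Omega_j)}$ gives the stated result.

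The only delicate point is verifying that Lemma \ref{lemma: Greens function dyadic} is applicable on the enlarged region $\Omega_j'$ (not just on $\Omega_j$), and handling the boundary case $d_j\sim h$ where $\Omega_j'$ may still meet $\mathrm{supp}\,\delta_h$; both are cleanly handled by a suitable choice of the constant $K$ in the dyadic decomposition so that the shells $\Omega_j,\dots,\Omega_j'''$ near the last index $J$ are still separated from $T_{\vec x_0}$, or alternatively by absorbing the delta-forcing contribution into $Cd_j^{-3/2}$ as above.
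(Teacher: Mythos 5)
Your proposal is correct and follows essentially the same route as the paper: apply the localized $H^2$ stability estimate (Corollary \ref{corollary:localh2}) on the pair $\Omega_j\subset\Omega_j'$ and bound the resulting $L^2$ norms via the pointwise estimates of Lemma \ref{lemma: Greens function dyadic} on the enlarged shell together with $\abs{\Omega_j'}\le Cd_j^3$. Your explicit treatment of the $\delta_h$ forcing term and of the validity of the pointwise bounds on $\Omega_j'$ fills in details the paper leaves implicit, but the argument is the same.
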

\begin{proof}
	By \Cref{corollary:localh2}, the H\"older  estimates, and Lemma \ref{lemma: Greens function dyadic} (with $\Om'_j$ instead of $\Om_j$), we obtain
	$$
	\begin{aligned}
	\norm{\g_0}_{H^2(\Omega_j)}+	\norm{\nabla\lambda_0}_{L^2(\Omega_j)} &\le Cd_j^{-1}\left(\norm{\lambda_0}_{L^2(\Omega'_j)}+\norm{\nabla \g_0}_{L^2(\Omega'_j)}+d_j^{-1}\norm{ \g_0}_{L^2(\Omega'_j)}\right)\\
	&\le	Cd_j^{1/2}\left(\norm{\lambda_0}_{L^\infty(\Omega'_j)}+\norm{\nabla \g_0}_{L^\infty(\Omega'_j)}+d_j^{-1}\norm{ \g_0}_{L^\infty(\Omega'_j)}\right)\\
	&\le Cd_j^{-3/2}.
	\end{aligned}
	$$
	
\end{proof}

\subsection{\texorpdfstring{$L^1(\Omega)$}{L(1)(Omega)} interpolation estimate for \texorpdfstring{$\lambda_0$}{lambda0}}
\begin{theorem}
	\label{theorem:lambda_0_interpolation}
	For $(\g_0,\lambda_0)$ the solution of \cref{eq:def_stoke_green0_1,eq:def_stoke_green0_2,eq:def_stoke_green0_3}, it holds
	\begin{equation}
	\onenorm{\lambda_0 - r_h(\lambda_0)} \leq C h \lnhh.
	\end{equation}
\end{theorem}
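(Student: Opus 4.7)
The plan is to use the dyadic decomposition $\Om=\Om_*\cup\bigcup_{j=0}^{J}\Om_j$ from the previous subsection, estimate $\lambda_0-r_h(\lambda_0)$ on each piece separately via Cauchy--Schwarz and the local $L^2$ approximation property of $r_h$ from \Cref{assumption:approximation}, and exploit the local $H^1$ control of the pressure provided by \Cref{cor: g0_and_lambda_0}. The logarithmic factor $\lnhh$ will arise purely from summing the contributions over the $J\sim \lnhh$ dyadic shells, see \cref{eq:dydic_J}; each individual shell will be shown to contribute a uniform $Ch$.

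On each annulus $\Om_j$ ($d_j\ge Kh$) I would apply Cauchy--Schwarz together with the volume bound $|\Om_j|\le Cd_j^3$ and then \Cref{assumption:approximation} on the pair $Q=\Om_j\subset \Om_j'=Q_d$ (so that the buffer $d\sim d_j$ satisfies $d\ge \bar\kappa h$ for $K$ large enough) to obtain
$$
\norm{\lambda_0-r_h(\lambda_0)}_{L^1(\Om_j)} \le Cd_j^{3/2}\norm{\lambda_0-r_h(\lambda_0)}_{L^2(\Om_j)} \le Cd_j^{3/2}\,h\,\norm{\nabla\lambda_0}_{L^2(\Om_j')}.
$$
Applying \Cref{cor: g0_and_lambda_0} to the shell $\Om_j'$ (which is itself a dyadic annulus, sitting inside the larger buffer $\Om_j''$) yields $\norm{\nabla\lambda_0}_{L^2(\Om_j')}\le Cd_j^{-3/2}$, so that every term in the sum is bounded by $Ch$ with a constant independent of $j$. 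Summation over $j=0,\dots,J$ with $J\lesssim\lnhh$ gives $Ch\lnhh$ for the dyadic part.

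The near-field region $\Om_*$ cannot be handled by the same chain since $\nabla\lambda_0\notin L^2$ uniformly in $h$ as $\vec x\to \x_0$. Instead I would use the pointwise scaling $\norm{\lambda_0}_{L^\infty(\Om_*)}\le Ch^{-2}$, which is the natural analogue of \Cref{lemma: Greens function dyadic} evaluated at $d_j\sim h$ and can be verified directly from the Green's matrix representation \cref{eq:greens_matrix_2} together with \Cref{proposition:Green_matrix}, using that $\delta_h$ is supported in $T_{\x_0}$ with $\onenorm{\delta_h}\le C$. Since the operator $r_h$ is locally $L^\infty$-stable (a standard consequence of the Cl\'ement/Scott--Zhang-type properties enforced by \Cref{assumption:approximation,assumption:superapproxmation_dyadic}), the same pointwise bound holds for $r_h(\lambda_0)$ on $\Om_*$, hence
$$
\norm{\lambda_0-r_h(\lambda_0)}_{L^1(\Om_*)} \le C|\Om_*|\, h^{-2} \le Ch^{3}\cdot h^{-2}=Ch,
$$
which matches the per-shell contribution and does not add any extra logarithmic factor.

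The main obstacle is the $\Om_*$ piece: unlike the dyadic shells, a clean $H^1$ reduction is unavailable and one must work directly from the Green's matrix bounds. In particular, verifying $\norm{\lambda_0}_{L^\infty(\Om_*)}\lesssim h^{-2}$ and the matching local $L^\infty$-stability of $r_h$ is the only genuinely non-mechanical step; everything else is a routine combination of Cauchy--Schwarz, \Cref{assumption:approximation}, \Cref{cor: g0_and_lambda_0}, and a telescoping sum producing the single logarithm.
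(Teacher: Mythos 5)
Your treatment of the dyadic shells is exactly the paper's: Cauchy--Schwarz with the volume factor $d_j^{3/2}$, the local $L^2$ approximation property of $r_h$ from \Cref{assumption:approximation} on $\Om_j\subset\Om_j'$, then $\norm{\nabla\lambda_0}_{L^2(\Om_j')}\le Cd_j^{-3/2}$ from \Cref{cor: g0_and_lambda_0}, and summation over $J\sim\lnhh$ shells. Where you diverge is the near-field set $\Om_*$, and there your premise is wrong: the $H^1$ chain does \emph{not} break down there. The paper simply applies \Cref{assumption:approximation} globally together with the $H^2$ regularity estimate \cref{eq:H2_estimate} to get
\begin{equation}
\twonorm{\lambda_0-r_h(\lambda_0)}\le Ch\twonorm{\nabla\lambda_0}\le Ch\twonorm{\delta_h}\le Ch^{-1/2},
\end{equation}
so that $(Kh)^{3/2}\norm{\lambda_0-r_h(\lambda_0)}_{L^2(\Om_*)}\le CK^{3/2}h$. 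The blow-up of $\twonorm{\nabla\lambda_0}$ like $h^{-3/2}$ is harmless because it is exactly compensated by the $(Kh)^{3/2}$ volume factor and the extra power of $h$ from the approximation property.

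Your alternative route through $\norm{\lambda_0}_{L^\infty(\Om_*)}\le Ch^{-2}$ is itself a correct computation (the kernel bound $|G_{i,4}(\x,\y)|\le c|\x-\y|^{-2}$ integrated against $|\delta_h|\le Ch^{-3}$ over a ball of radius $Ch$ gives $h^{-2}$), but the step where you transfer this bound to $r_h(\lambda_0)$ relies on a local $L^\infty$-stability of $r_h$ that is \emph{not} among the paper's stated assumptions (\Cref{assumption:approximation} and \Cref{assumption:superapproxmation_dyadic} give $L^2$-type approximation and super-approximation, not $L^\infty$ stability of the pressure interpolant). For concrete Taylor--Hood interpolants this property is indeed standard, so your argument is salvageable, but within the paper's axiomatic framework it is a gap, and an unnecessary one: the one-line global $H^2$ argument closes the near-field case without any additional hypotheses.
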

\begin{proof}
	Using the dyadic decomposition and the Cauchy-Schwarz inequality
	\begin{align}
	\onenorm{\lambda_0 - r_h (\lambda_0)} 
	&\leq \norm{\lambda_0 - r_h (\lambda_0)}_{L^1(\Omega_*)} + \sum_{j=1}^{J}\norm{\lambda_0 - r_h (\lambda_0)}_{L^1(\Omega_j)} \\
	&\leq  (K h)^{3/2}\norm{\lambda_0 - r_h (\lambda_0)}_{L^2(\Omega_*)} + C\sum_{j=1}^J d_j^{3/2}\norm{\lambda_0 - r_h (\lambda_0)}_{L^2(\Omega_j)}. \label{eq:lambda_l1_interpolation_1}
	\end{align} 
	We apply \Cref{assumption:approximation} and the $H^2$ regularity as in \cref{eq:H2_estimate}, which give
	\begin{equation}
	\twonorm{\lambda_0 - r_h (\lambda_0)} \leq Ch \twonorm{\nabla \lambda_0} \leq Ch \twonorm{\delta_h} \leq Ch^{-1/2}.
	\end{equation}	
	This implies for the first term in \cref{eq:lambda_l1_interpolation_1}
	\begin{equation}
	(K h)^{3/2}\norm{\lambda_0 - r_h (\lambda_0)}_{L^2(\Omega_*)} 
	\leq CK^{3/2}h.
	\end{equation}
	For the second term, by the approximation estimate \Cref{assumption:approximation} and \Cref{cor: g0_and_lambda_0} it follows
	\begin{equation}
	\norm{\lambda_0 - r_h (\lambda_0)}_{L^2(\Omega_j)} \leq Ch \norm{\nabla\lambda_0}_{L^2(\Omega'_j)} \le Chd_j^{-3/2}. \label{eq:lambda0_est}
	\end{equation}
	Hence, we can conclude
	\begin{equation}
	\sum_{j=1}^J d_j^{3/2}\norm{\lambda_0 - r_h (\lambda_0)}_{L^2(\Omega_j)} \leq \sum_{j=1}^{J}Ch \leq Ch J.
	\end{equation}
	From \cref{eq:dydic_J}, we see that $J$ scales logarithmically in $h$ and thus get the claimed result.
\end{proof}
\subsection{Local duality argument}
In the following theorem, we again consider the sub-domains $\Omega_j$ from the dyadic decomposition in a duality argument.
For the error
\begin{equation}
\norm{\g_0-\g_{0,h}}_{L^2(\Omega_j')} = 
\sup_{\substack{\twonorm{\v}\leq 1 \\ \v \in C^{\infty}_0(\Omega_j')}} (\g_0 - \g_{0,h},\v)
\end{equation}
we can make a duality argument using the dual problem
	\begin{equation}
	-\Delta \w + \nabla \varphi = \v \quad \text{in } \Omega, \label{eq:dual_g0_1}\quad
	\nabla \cdot \w =0 \quad \text{in } \Omega, %
	\quad \w =0 \quad \text{on } \partial \Omega.%
	\end{equation}

\begin{theorem}
	\label{theorem:g0_dual}
	For $(\g_0,\lambda_0)$ the solution of \cref{eq:def_stoke_green0_1,eq:def_stoke_green0_2,eq:def_stoke_green0_3} and $\alpha \in (0,1)$ it holds
	\begin{align}
	\norm{\g_0-\g_{0,h}}_{L^2(\Omega_j')} 
	\leq C h \norm{\nabla(\g_0-\g_{0,h})}_{L^2(\Omega_j''')} + Ch^{\alpha}d_j^{-1/2 - \alpha} \onenorm{\nabla (\g_0-\g_{0,h})} \\
	+ Ch^{1+\alpha}d_j^{-1/2-\alpha}\lnhh .
	\end{align}
\end{theorem}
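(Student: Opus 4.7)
My plan is an Aubin--Nitsche-type duality argument matched to the dyadic decomposition. First, I would write
\[
\norm{\g_0-\g_{0,h}}_{L^2(\Omega_j')} = \sup_{\substack{\v\in C^{\infty}_0(\Omega_j')\\\twonorm{\v}\le 1}} (\g_0-\g_{0,h},\v),
\]
and for each admissible $\v$ solve the dual Stokes system \cref{eq:dual_g0_1} to obtain $(\w,\varphi)\in(H^1_0(\Omega)\cap H^2(\Omega))^3\times H^1(\Omega)$ with the global bound $\norm{\w}_{H^2(\Omega)}+\norm{\varphi}_{H^1(\Omega)}\le C$ from \cref{eq:H2_estimate}. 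Since $\v$ vanishes outside $\Omega_j'$, I would use the Green's matrix representation of $(\w,\varphi)$ together with the pointwise and H\"older bounds in \Cref{proposition:Green_matrix} and Cauchy--Schwarz over the support of $\v$ (of volume $\sim d_j^3$) to obtain the interior regularity
\[
\norm{\w}_{C^{1,\alpha}(\Omega\backslash\Omega_j'')}+\norm{\varphi}_{C^{0,\alpha}(\Omega\backslash\Omega_j'')}\le C d_j^{-1/2-\alpha},
\]
which produces exactly the $d_j^{-1/2-\alpha}$ scaling needed in the final statement.

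Next I would rewrite the inner product as a Stokes bilinear form and subtract the Galerkin orthogonality identity \cref{eq:def_stoke_green0_fem} tested against $(\w_h,\varphi_h):=(P_h\w,r_h\varphi)$, using $\nabla\cdot\w=0$, to get
\[
(\g_0-\g_{0,h},\v)=a((\g_0-\g_{0,h},\lambda_0-\lambda_{0,h}),(\w-P_h\w,\varphi-r_h\varphi)).
\]
The divergence preservation of \Cref{assumption:divergence} cancels the discrete pressure contribution and reduces the middle piece to $(\lambda_0-r_h\lambda_0,\nabla\cdot(\w-P_h\w))$, and the discrete divergence-freeness $(\nabla\cdot\g_{0,h},q_h)=0$ for $q_h\in M_h$ cleans up the last piece similarly. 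I would then split each remaining piece over $\Omega_j'''$ and $\Omega\backslash\Omega_j'''$. On $\Omega_j'''$, the $L^2$ approximation bounds of \Cref{assumption:approximation} combined with the global $H^2$ estimate on $(\w,\varphi)$ give a factor $Ch$ in front of $\norm{\nabla(\g_0-\g_{0,h})}_{L^2(\Omega_j''')}$, producing the first term. On the complement, the H\"older approximation bounds of \Cref{assumption:approximation_hoelder} together with the interior H\"older estimate above yield a factor $Ch^{\alpha}d_j^{-1/2-\alpha}$ in front of $\onenorm{\nabla(\g_0-\g_{0,h})}$, giving the second term. Finally, for the pressure interpolation contribution on the complement I would pair $\onenorm{\lambda_0-r_h\lambda_0}\le Ch\lnhh$ from \Cref{theorem:lambda_0_interpolation} with $\norm{\nabla(\w-P_h\w)}_{L^{\infty}(\Omega\backslash\Omega_j''')}\le Ch^{\alpha}d_j^{-1/2-\alpha}$, which produces exactly the last term $Ch^{1+\alpha}d_j^{-1/2-\alpha}\lnhh$.

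I expect the main obstacle to be establishing the sharp interior $C^{1,\alpha}$ and $C^{0,\alpha}$ bounds for the dual solution with the correct $d_j^{-1/2-\alpha}$ decay on a convex polyhedron: although \Cref{proposition:Green_matrix} supplies the right pointwise and H\"older control of the Green's matrix, the argument must combine these with Cauchy--Schwarz over the thin shell $\Omega_j'$ and respect the boundary effects. The bookkeeping of the three interpolants $P_h\w$, $r_h\varphi$, $r_h\lambda_0$ together with the zero-mean-value constraint on the pressure must also be done carefully so as not to contaminate the weighted decay; beyond this, all steps reduce to standard duality manipulations and the dyadic scales combine cleanly through \Cref{corollary:localh2}.
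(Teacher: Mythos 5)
Your proposal follows essentially the same route as the paper: the same duality representation over $\Omega_j'$, the same Galerkin-orthogonality decomposition into the three pieces (velocity interpolation, $\lambda_0$ interpolation against $\nabla\cdot(\w-P_h\w)$, and $\varphi$ interpolation against $\nabla\cdot(\g_0-\g_{0,h})$), the same near/far splitting over $\Omega_j'''$, and the same interior H\"older bound $\norm{\w}_{C^{1+\alpha}(\Omega\backslash\Omega_j'')}\le Cd_j^{-1/2-\alpha}$ (which the paper simply cites from \cite{MR2945141} while you rederive it from \Cref{proposition:Green_matrix}, which is indeed how it is obtained). The only detail you gloss over is the near-region pressure piece $(\lambda_0-r_h\lambda_0,\nabla\cdot(\w-P_h\w))_{\Omega_j'''}$, which yields $Ch^2 d_j^{-3/2}$ via \Cref{cor: g0_and_lambda_0} and is absorbed into the last term using $h\le d_j$ and $\alpha\le 1$.
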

\begin{proof}
	By using \cref{eq:dual_g0_1} and that $\g_0$ and $\g_{h,0}$ are divergence free for $r_h(\varphi)$, the bilinear form $a(\cdot,\cdot)$ from \cref{eq:def_stoke_fem} and \Cref{assumption:divergence}, it follows
	\begin{align}
	(\g_0 -\g_{0,h},\v) 
	&= (\nabla(\g_0-\g_{0,h}),\nabla \w) - (\varphi,\nabla \cdot (\g_0 -\g_{0,h}))\\
	&= (\nabla(\g_0 - \g_{0,h}), \nabla(\w - P_h(\w))) \\
	&\quad + (\nabla(\g_0 - \g_{0,h}), \nabla P_h(\w)) -(\varphi -r_h(\varphi),\nabla \cdot (\g_0 -\g_{0,h}))\\
	&=(\nabla(\g_0 - \g_{0,h}), \nabla(\w - P_h(\w))) \\
	&\quad + (\lambda_0 - \lambda_{0,h}, \nabla \cdot P_h(\w)) -(\varphi -r_h(\varphi),\nabla \cdot (\g_0 -\g_{0,h}))\\
	&=(\nabla(\g_0 - \g_{0,h}), \nabla(\w - P_h(\w))) \\
	&\quad + (\lambda_0 - r_h(\lambda_0), \nabla \cdot  (P_h(\w)-\w)) -(\varphi -r_h(\varphi),\nabla \cdot (\g_0 -\g_{0,h}))\\
	&:= \tau_1 + \tau_2 + \tau_3.
	\end{align}
	For $\tau_1$, we split the term
	\begin{align}
	\tau_1 &= (\nabla(\g_0 - \g_{0,h}), \nabla(\w -  P_h(\w)))_{\Omega_j'''} + (\nabla(\g_0 - \g_{0,h}), \nabla(\w - P_h(\w)))_{\Omega \backslash\Omega_j'''}\\
	&:= \tau_{11} + \tau_{12}.
	\end{align}
	We then can estimate $\tau_{11}$ using \Cref{assumption:approximation} for $ P_h$
	\begin{align}
	\tau_{11} &\leq \norm{\nabla(\g_0 -\g_{0,h})}_{L^2(\Omega_j''')} \norm{\nabla(\w - P_h(\w))}_{L^2(\Omega)}\\&\leq Ch\norm{\nabla(\g_0 -\g_{0,h})}_{L^2(\Omega_j''')}\norm{\w}_{H^2(\Omega)} 
	\leq Ch\norm{\nabla(\g_0 -\g_{0,h})}_{L^2(\Omega_j''')}.
	\end{align}
	Now we use \cite[(5.11)]{MR2945141} and \Cref{assumption:approximation_hoelder} to see that
	\begin{equation}
	\tau_{12} \leq C h^{\alpha}\onenorm{\nabla(\g_0 - \g_{0,h})} \norm{\w}_{C^{1+\alpha}(\Omega \backslash\Omega_j'')} \leq C h^{\alpha}d_j^{-1/2 - \alpha}\onenorm{\nabla(\g_0 - \g_{0,h})}.
	\end{equation}
	Analogously, we split $\tau_2$
	\begin{align}
	\tau_2 &= -(\lambda_0 -r_h(\lambda_0), \nabla \cdot(\w -  P_h(\w))_{\Omega_j'''} -(\lambda_0 -r_h(\lambda_0), \nabla \cdot(\w -  P_h(\w))_{\Omega \backslash\Omega_j'''}\\
	&:= \tau_{21} + \tau_{22}.
	\end{align}
	Then again, we use approximation results and Corollary \ref{cor: g0_and_lambda_0}, to see
	\begin{equation}
	\tau_{21} \leq C h^2\norm{\nabla \lambda_0}_{L^2(\Omega_j'')}\norm{\w}_{H^2(\Omega)}\leq C h^2\norm{\nabla \lambda_0}_{L^2(\Omega_j'')}\le C h^2d_j^{-3/2}.
	\end{equation}
	For the second term, we apply again the H\"older estimate, \Cref{theorem:lambda_0_interpolation} and \cite[(5.11)]{MR2945141}
	\begin{multline}
	\tau_{22} \leq \onenorm{\lambda_0- r_h(\lambda_0)}\norm{\nabla(\w -  P_h(\w))}_{L^{\infty}(\Omega \backslash\Omega_j''')} \\
	\leq Ch^{1+\alpha} \lnhh \norm{\w}_{C^{1+\alpha}(\Omega\backslash\Omega_j'')} \leq Ch^{1+\alpha}d_j^{-1/2 -\alpha}\lnhh.
	\end{multline}
	It remains to deal with $\tau_3$, we split again
	\begin{equation}
	\tau_3 \leq \abs{(\varphi - r_h(\varphi),\nabla \cdot (\g_0- \g_{0,h}))_{\Omega_j'''}} +\abs{(\varphi - r_h(\varphi),\nabla \cdot (\g_0- \g_{0,h}))_{\Omega \backslash \Omega_j'''}} := \tau_{31} + \tau_{32}.
	\end{equation}
	Analogously to before, we estimate
	\begin{align}
	\tau_{31} &\leq \norm{\varphi - r_h(\varphi)}_{L^2(\Omega_j''')} \norm{\nabla(\g_0-\g_{0,h})}_{L^2(\Omega_j''')} \leq Ch \norm{\nabla(\g_0-\g_{0,h})}_{L^2(\Omega_j''')} \quad \text{and}\\
	\tau_{32} &\leq \norm{\varphi - r_h(\varphi)}_{L^{\infty}(\Omega\backslash \Omega_j''')} \onenorm{\nabla(\g_0-\g_{0,h})} \leq Ch^{\alpha} d_j^{-1/2 - \alpha} \onenorm{\nabla(\g_0-\g_{0,h})}.
	\end{align}
	The estimate for $\norm{\varphi - r_h(\varphi)}_{L^{\infty}(\Omega\backslash \Omega_j''')}$ is given in \cite[p. 17]{MR2945141}.
	Summing up, we have
	\begin{multline}
	\norm{\g_0-\g_{0,h}}_{L^2(\Omega_j)} 
	\leq Ch\norm{\nabla(\g_0 -\g_{0,h})}_{L^2(\Omega_j''')} + C h^{\alpha}d_j^{-1/2 - \alpha}\onenorm{\nabla(\g_0-\g_{0,h})} \\\nonumber
	+ h^2 d_j^{-3/2}  + Ch^{1+\alpha}d_j^{-1/2 -\alpha}\lnhh.
	\end{multline}
	Now, because $h \leq d_j$ due to \cref{eq:dydic_J} and $\alpha \leq 1$, it holds $h^2d_j^{-3/2} \leq h^{1+\alpha} d_j^{-1/2 - \alpha}$.
	Thus, we arrive at the conclusion of the theorem.
\end{proof}

\subsection{\texorpdfstring{$L^1(\Omega)$}{L(1)(Omega)} estimate and weighted estimate}

Now we can proceed with the proof of \Cref{theorem:g0_l1}.
\begin{proof}[Proof of \Cref{theorem:g0_l1}]
	We again use the dyadic decomposition and the Cauchy-Schwarz inequality to see
	\begin{align}
	\onenorm{\nabla(\g_0-&\g_{0,h})} 
	\leq \norm{\nabla(\g_0 - \g_{0,h})}_{L^1(\Omega_*)} + \sum_{j=1}^{J}\norm{\nabla (\g_0-\g_{0,h})}_{L^1(\Omega_j)} \\
	&\leq (Kh)^{3/2}\twonorm{\nabla(\g_0-\g_{0,h}} + C\sum_{j=1}^{J}d_j^{3/2}\norm{\nabla (\g_0-\g_{0,h})}_{L^2(\Omega_j)}.
	\label{eq:theorem_g0_l1_6}
	\end{align}
	Applying \Cref{proposition:fem_convergence}, \Cref{assumption:approximation}, $H^2$ regularity as stated in \cref{eq:H2_estimate} and \cref{eq:deltah_est} leads to the following estimate for the first term
	\begin{align}
	h^{3/2}\twonorm{\nabla(\g_0-\g_{0,h})} 
	&\leq Ch^{5/2} \Big(\norm{\g_0}_{H^2(\Omega)}+\norm{\lambda_0}_{H^1(\Omega)}\Big)\label{eq:theorem_g0_l1_1}\\
	&\leq C h^{5/2} \norm{\delta_h}_{L^2(T_{\x_0})}\leq Ch.
	\end{align}
	In the following, we consider the second term for which we want to apply the local energy estimate from \Cref{proposition:local_energy_estimate}:
	\begin{align}
	\norm{\nabla(\g_0-\g_{0,h})}_{L^2(\Omega_j)} 
	&\leq C\big(\norm{\nabla(\g_0 - P_h(\g_0))}_{L^2(\Omega_j')} + \norm{\lambda_0 - r_h(\lambda_0)}_{L^2(\Omega_j')}\Big) \\
	&\quad + C(\varepsilon d_j)^{-1} \norm{\g_0- P_h(\g_0)}_{L^2(\Omega_j')}
	+ \varepsilon\norm{\nabla(\g_0 - \g_{0,h})}_{L^2(\Omega_j')} \\
	&\quad +  C(\varepsilon d_j)^{-1}\norm{\g_0 - \g_{0,h}}_{L^2(\Omega_j')}. \label{eq:theorem_g0_l1_4}
	\end{align}
	For the first two terms we use approximation results and Corollary \ref{cor: g0_and_lambda_0}, to obtain
	\begin{align}
	\norm{\nabla(\g_0 - P_h(\g_0))}_{L^2(\Omega_j')} + \norm{\lambda_0 - r_h(\lambda_0)}_{L^2(\Omega_j')}
	&\leq Ch \Big(\norm{\g_0}_{H^2(\Omega_j'')} + \norm{\lambda_0}_{H^1(\Omega_j'')}\Big)\\
	&\leq Chd_j^{-3/2} .
	\end{align}
	The contribution to the sum is given by
	\begin{equation}
	\sum_{j=1}^{J}d_j^{3/2}(\norm{\nabla(\g_0 - P_h(\g_0))}_{L^2(\Omega_j')} + \norm{\lambda_0 - r_h(\lambda_0)}_{L^2(\Omega_j')}) \leq Ch J \leq Ch \lnhh,\label{eq:summing_up_J}
	\end{equation}
	where due to \cref{eq:dydic_J} we see that $J \sim \lnhh$.
	Similarly, we see
	\begin{equation}
	(\varepsilon d_j)^{-1}\norm{\g_0 -  P_h(\g_0)}_{L^2(\Omega_j')} \leq C\frac{h}{\varepsilon d_j}h d_j^{-3/2}. \label{eq:theorem_g0_l1_3}
	\end{equation}
	For $\alpha>0$, it holds
	\begin{equation}
	\sum_{j=1}^{J}\left(\frac{h}{d_j} \right)^{\alpha} \leq h^{\alpha}\sum_{j=1}^{J} 2^{j\alpha}\leq Ch^{\alpha}2^{\alpha J} \leq CK^{-\alpha}.\label{eq:theorem_g0_l1_5}
	\end{equation}
	Thus, we get by summing up \cref{eq:theorem_g0_l1_3} and using \cref{eq:theorem_g0_l1_5} with $\alpha =1$ that $\sum_{j=1}^{J}C\frac{h}{\varepsilon d_j}h\leq C(K\varepsilon)^{-1}h$.
	To summarize our results so far, we define $M_j = d_j^{3/2}\norm{\nabla (\g_0-\g_{0,h})}_{L^2(\Omega_j)}$, $M'_j = d_j^{3/2}\norm{\nabla (\g_0-\g_{0,h})}_{L^2(\Omega'_j)}$ and substitute into \cref{eq:theorem_g0_l1_4}
	\begin{equation}
	\sum_{j=1}^{J}M_j \leq Ch\lnhh + C(K\varepsilon)^{-1}h  + \varepsilon\sum_{j=1}^{J}M'_j +  C\sum_{j=1}^{J}(\varepsilon d_j)^{-1}d_j^{3/2}\norm{\g_0 - \g_{0,h}}_{L^2(\Omega_j')}.
	\end{equation}
	Next, we apply \Cref{theorem:g0_dual} to the last term
	\begin{multline}
	\sum_{j=1}^{J}M_j 
	\leq Ch\lnhh + C(K\varepsilon)^{-1}h  + \varepsilon\sum_{j=1}^{J}M'_j\\
	+ C\varepsilon^{-1}\sum_{j=1}^{J}\bigg(
	d_j^{1/2}h\norm{\nabla(\g_0-\g_{0,h})}_{L^2(\Omega_j''')} + \bigg[\frac{h}{d_j}\bigg]^{\alpha} \onenorm{\nabla (\g_0-\g_{0,h})} + h\bigg[\frac{h}{d_j}\bigg]^{\alpha}\lnhh\bigg). \nonumber
	\end{multline}
	We expand the sum over the last three terms so that we get
	\begin{multline}
	\sum_{j=1}^{J}M_j \nonumber
	\leq C\Big(h\lnhh + (K\varepsilon)^{-1}h  + \varepsilon\sum_{j=1}^{J}M'_j
	+ \frac{h}{d_J}\varepsilon^{-1}\sum_{j=1}^{J}
	d_j^{3/2}\norm{\nabla(\g_0-\g_{0,h})}_{L^2(\Omega_j''')}\Big) \\+C\varepsilon^{-1}\sum_{j=1}^{J}\left[\frac{h}{d_j}\right]^{\alpha} \onenorm{\nabla (\g_0-\g_{0,h})} 
	+ Ch\varepsilon^{-1}\sum_{j=1}^{J}\left[\frac{h}{d_j}\right]^{\alpha}\lnhh.
	\end{multline}
	Now we can again use \cref{eq:theorem_g0_l1_5} on the last two summands to arrive at
	\begin{multline}
	\sum_{j=1}^{J}M_j\leq Ch\lnhh  + C\varepsilon\sum_{j=1}^{J}M'_j  + \nonumber CK^{-\alpha}\varepsilon^{-1}\Big(\onenorm{\nabla (\g_0-\g_{0,h})}+ h\lnhh\Big) \\
	+ C(K\varepsilon)^{-1}\sum_{j=1}^{J}
	d_j^{3/2}\norm{\nabla(\g_0-\g_{0,h})}_{L^2(\Omega_j''')},
	\end{multline}
	where we also used that $h/d_J \leq K^{-1}$ and $K>1$. Now for the second and last term, we easily see
	\begin{equation}
	\sum_{j=1}^{J}M'_j + \sum_{j=1}^{J}
	d_j^{3/2}\norm{\nabla(\g_0-\g_{0,h})}_{L^2(\Omega_j''')}
	\leq C \sum_{j=1}^{J}M_j + C(Kh)^{3/2}\norm{\nabla(\g_0-\g_{0,h}}_{L^2(\Omega_*)},%
	\end{equation}
	where the last term is again bounded by $CK^{3/2}h$.
	Combined, this means we have for constant $K>1$ and $\varepsilon>0$
	\begin{multline}
	\sum_{j=1}^{J}M_j \nonumber
	\leq Ch\lnhh  + C((K\varepsilon)^{-1} + \varepsilon)\sum_{j=1}^{J}M_j + C K^{3/2} \varepsilon h + CK^{1/2}\varepsilon^{-1}h \\
	+ CK^{-\alpha}\varepsilon^{-1}\Big(\onenorm{\nabla (\g_0-\g_{0,h})} + h\lnhh \Big).
	\end{multline}
	We make $C\varepsilon<1/4$ and $C(K\varepsilon)^{-1}<1/4$ by choosing $\varepsilon$ small and $K$ big enough. After kicking back the sum to the left-hand side this leads to
	\begin{equation}
	\sum_{j=1}^{J}M_j 
	\leq C_{K,\varepsilon}h\lnhh + CK^{-\alpha}\varepsilon^{-1}\onenorm{\nabla (\g_0-\g_{0,h})}.
	\end{equation}
	We now treat $\varepsilon$ as a constant. Finally substituting this into \cref{eq:theorem_g0_l1_6} 
	\begin{equation}
	\onenorm{\nabla(\g_0-\g_{0,h})} \leq C_{K,\varepsilon}h\lnhh  + CK^{-\alpha}\onenorm{\nabla (\g_0-\g_{0,h})}\label{eq:theorem_g0_l1_7}
	\end{equation}
	and choosing $K$ large enough such that $CK^{-\alpha}<1/2$, we get the result.
\end{proof}
As a corollary to the theorem, we get the respective estimate for weighted norms.
\begin{proof}[Proof of \Cref{corollary:g0_convergence}]
	This corollary directly follows using the same techniques as above and the fact $\sigma(\x) \sim d_j$ on $\Omega_j$.
	We start by splitting the left-hand side according to the dyadic decomposition
	\begin{align}
	\twonorm{\sigma^{3/2}\nabla(\g_0-&\g_{0,h})}
	\leq \norm{\sigma^{3/2}\nabla(\g_0-\g_{0,h})}_{L^2(\Omega_*)} + \sum_{j=1}^{J}\norm{\sigma^{3/2}\nabla(\g_0-\g_{0,h})}_{L^2(\Omega_j)}\\
	&\leq C(\kappa h)^{3/2} \norm{\nabla(\g_0-\g_{0,h})}_{L^2(\Omega_*)} + C\sum_{j=1}^{J} d_j^{3/2} \norm{\nabla(\g_0-\g_{0,h})}_{L^2(\Omega_j)}.
	\label{eq:corollary_g0_l1_6}
	\end{align}
	Without loss of generality, we can assume $\kappa = K$. After going through the same steps as in the proof of \Cref{theorem:g0_l1}, particularly \cref{eq:theorem_g0_l1_6}, we end up with the right-hand side of \cref{eq:theorem_g0_l1_7}
	\begin{equation}
	\twonorm{\sigma^{3/2}\nabla(\g_0-\g_{0,h})}
	\leq Ch\lnhh + CK^{-\alpha}\onenorm{\nabla (\g-\g_h)}.
	\end{equation}
	Now applying \Cref{theorem:g0_l1} to estimate $\onenorm{\nabla (\g-\g_h)}$ we arrive at the result.
\end{proof}
Similarly we can conclude the following result.
\begin{proof}[Proof of \Cref{corollary:g1_sigma}]
	Again using the fact $\sigma(\x) \sim d_j$ on $\Omega_j$,
	we start by splitting the left-hand side according to the dyadic decomposition
	\begin{multline}
	\twonorm{\sigma^{3/2}\nabla(\g_1-\g_{1,h})}\nonumber
	\\ \leq C(\kappa h)^{3/2} \norm{\nabla(\g_1-\g_{1,h})}_{L^2(\Omega_*)} + C\sum_{j=1}^{J} d_j^{3/2} \norm{\nabla(\g_1-\g_{1,h})}_{L^2(\Omega_j)}.
	\label{eq:corollary_g1_l1_6}
	\end{multline}
	As before, we can assume $\kappa = K$. This is equal to the term introduced by the dyadic decomposition in the proof of \cite{MR2945141}. Again, following the same steps as there, we get
	\begin{equation}
	\twonorm{\sigma^{3/2}\nabla(\g_1-\g_{1,h})}
	\leq C + C\onenorm{\nabla (\g-\g_h)},
	\end{equation}
	where $C$ depends the constants introduced in the proof of \cite{MR2945141}.
	Nonetheless, applying \Cref{proposition:g1_l1} to estimate $\onenorm{\nabla (\g-\g_h)}$ we arrive at the result.
\end{proof}

\subsection{Proof of \texorpdfstring{\Cref{lemma:lambda0}}{Lemma 3.9}}
\begin{proof}[Proof of \Cref{lemma:lambda0}]
	We use the dyadic decomposition introduced in the beginning of \Cref{section:duality_interpolation} to get the following estimate  due to $\sigma \sim d_j$ on $\Omega_j$ ($\sigma \sim Kh$ on $\Omega_*$)
	\begin{equation}
	\stwonorm{\sigma^{3/2}\nabla\lambda_0} \leq  \nonumber %
	Ch^3\norm{\nabla\lambda_0}_{L^2(\Omega)}^2+\sum_{j=1}^{J}d^3_j\norm{\nabla\lambda_0}_{L^2(\Omega_j)}^2.
	\end{equation}
	The first summand is bounded by a constant $C$ due to \cref{eq:H2_estimate} and \cref{eq:deltah_est}.
	By Corollary \ref{cor: g0_and_lambda_0} we see that $\norm{\nabla\lambda_0}^2_{L^2(\Omega_j)}\le Cd_j^{-3}$ and as a result
	\begin{equation}
	\sum_{j=1}^{J}d^3_j\norm{\nabla\lambda_0}^2_{L^2(\Omega_j)}\le C\sum_{j=1}^{J}1 = CJ \leq C\lnhh.
	\end{equation}
	This proves the result for the weighted case and by $\twonorm{\sigma^{-3/2}}\leq \lnhh^{1/2}$ the $L^1$ estimate.
\end{proof}

\section{Estimates for the pressure}
\label{section:pressure}
We now consider estimates for the remaining component of our Stokes system, the pressure. Similarly to before, let $\delta_h$ denote a smooth delta function on the tetrahedron where the maximum for the pressure is attained.
We may define the following regularized Green's function to deal with the pressure
\begin{equation}
	-\Delta \G + \nabla \Lambda = 0 \quad \text{in } \Omega, \label{eq:green_pressure_1}
	\quad\nabla \cdot \G = \delta_h - \phi \quad \text{in } \Omega, 
	\quad\G = 0 \quad\text{on } \partial \Omega. 
\end{equation}
By construction we have $\int_{\Omega}\delta_h (\x) - \phi(\x) d\x = 0$.
This also allows us to apply similar arguments as in \cite{MR3422453,MR2945141}, only with different bounds for the appearing $\u_h$ terms.

The global case has already been discussed in \cite{MR3422453,MR2945141}, thus we now focus on localized estimates. As before, we need some auxiliary results which we state now.
\begin{proposition}
	\label{proposition:lambda_interpolation}
	\begin{equation}
	\onenorm{\nabla(P_h(\G)-\G)} + \onenorm{r_h(\Lambda)-\Lambda} \leq C.
	\end{equation}
\end{proposition}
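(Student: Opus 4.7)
The plan is to mirror the structure of the $L^1$-type estimates established for $(\g_0, \lambda_0)$ in Section \ref{section:duality_interpolation}, namely applying the dyadic decomposition $\Omega = \Omega_* \cup \bigcup_{j=0}^{J} \Omega_j$ introduced there, Cauchy--Schwarz to pass from $L^1$ to $L^2$, and local $H^2$ regularity on each annulus.

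First I would use the Green's matrix representation (see Section \ref{section:continuousproblem}) and Proposition \ref{proposition:Green_matrix} applied to the right-hand side $\delta_h - \phi$. Since $\phi$ is a fixed smooth cutoff supported strictly inside $\Omega$ away from the edges, its contribution to $\G$ and $\Lambda$ is uniformly smooth and bounded, so the analysis reduces to the contribution of $\delta_h$. This yields, in analogy to Lemma \ref{lemma: Greens function dyadic}, pointwise bounds of the form $|\G(\x)| + d_j|\nabla \G(\x)| + d_j|\Lambda(\x)| \leq C d_j^{-2}$ for $\x \in \Omega_j$, provided $K$ is large enough that $T_{\x_0}$ is disjoint from $\Omega_j'$. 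Passing to $L^2$ norms over $\Omega_j$ and applying the localized $H^2$ estimate in the same style as Corollary \ref{corollary:localh2} (adapted to the inhomogeneous divergence, but with vanishing right-hand side away from $T_{\x_0}$ and $\mathrm{supp}\,\phi$) gives
\begin{equation}
\norm{\G}_{H^2(\Omega_j)} + \norm{\nabla \Lambda}_{L^2(\Omega_j)} \leq C d_j^{-5/2}. \nonumber
\end{equation}

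Next, I would split
\begin{align}
\onenorm{\nabla(P_h(\G)-\G)} + \onenorm{r_h(\Lambda)-\Lambda}
& \leq \bigl(\cdots\bigr)_{\Omega_*} + \sum_{j=0}^{J} \bigl(\cdots\bigr)_{\Omega_j} \nonumber
\end{align}
and estimate each term. On $\Omega_*$, Cauchy--Schwarz yields a prefactor $(Kh)^{3/2}$, which I would combine with the global $H^2/H^1$-regularity for $(\G,\Lambda)$. Here the main technical point is lifting the non-homogeneous divergence $\nabla \cdot \G = \delta_h - \phi$ by a Bogovskii-type auxiliary function to obtain $\norm{\G}_{H^2(\Omega)} + \norm{\Lambda}_{H^1(\Omega)} \leq C\norm{\delta_h - \phi}_{H^1(\Omega)} \leq Ch^{-5/2}$, giving a contribution bounded by $CK^{3/2}$ once combined with Assumption \ref{assumption:approximation}. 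On $\Omega_j$, Cauchy--Schwarz contributes a factor $d_j^{3/2}$, Assumption \ref{assumption:approximation} contributes $Ch$, and the local $H^2$ bound above contributes $C d_j^{-5/2}$, giving $C h d_j^{-1}$ per annulus.

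Summing $\sum_{j=0}^J h d_j^{-1} = h \sum_{j=0}^J 2^{j}$ and using $2^{-J} \sim K h$ from \cref{eq:dydic_J} produces a bound of order $C K^{-1}$, which is the decisive step: unlike the analogous Lemma \ref{theorem:g0_l1} where one picks up an $\lnhh$ factor because the right-hand side is less singular, here the $d_j^{-5/2}$ pointwise scaling of $\G$ is exactly strong enough that the geometric sum telescopes to a bounded quantity. Combining both contributions yields the desired bound.

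The main obstacle I expect is the global $H^2$ estimate on $\Omega_*$: one must carefully handle the non-homogeneous divergence constraint $\nabla \cdot \G = \delta_h - \phi$ (for which $\delta_h - \phi$ has zero mean by construction and by the normalization of $\phi$) and verify that the lifting can be done with the expected $H^1$-to-$H^2$ mapping property on a convex polyhedron, so that the global regularity estimate $\norm{\G}_{H^2(\Omega)} + \norm{\Lambda}_{H^1(\Omega)} \leq Ch^{-5/2}$ survives; all the remaining steps follow the template of the proof of \Cref{theorem:g0_l1} with only the scaling exponents altered.
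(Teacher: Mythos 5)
Your argument is sound, but note that the paper does not actually prove this proposition: it is taken verbatim from the literature (Lemma 5.4 of the cited Guzm\'an--Leykekhman reference \cite{MR2945141}), so there is no in-paper proof to compare against. What you have written is essentially a reconstruction of that cited proof using the machinery the paper itself builds in \Cref{section:duality_interpolation} for $(\g_0,\lambda_0)$, with the exponents shifted by one because the data $\delta_h-\phi$ now sits in the divergence equation: the relevant Green's matrix entries are $G_{i,4}$ ($i\le 3$) and $G_{4,4}$, which by \Cref{proposition:Green_matrix} decay like $\abs{\x-\vec\xi}^{-2}$ and $\abs{\x-\vec\xi}^{-3}$ respectively, giving your $\abs{\G}\le Cd_j^{-2}$, $\abs{\nabla\G}+\abs{\Lambda}\le Cd_j^{-3}$, hence $\norm{\G}_{H^2(\Omega_j)}+\norm{\nabla\Lambda}_{L^2(\Omega_j)}\le Cd_j^{-5/2}$ and the per-annulus contribution $Chd_j^{-1}$; the geometric sum $h\sum_j 2^j\sim K^{-1}$ then closes without the logarithm that appears for $\g_0$, and the $\Omega_*$ term is $O(K^{3/2})$ via $\norm{\G}_{H^2(\Omega)}+\norm{\Lambda}_{H^1(\Omega)}\le C\norm{\delta_h-\phi}_{H^1(\Omega)}\le Ch^{-5/2}$. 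All scalings check out. Two points deserve explicit care if this were written out in full: (i) the representation formula \cref{eq:greens_matrix_2} as stated in the paper covers only momentum data, so you must invoke the companion representation for divergence data (the fourth column of the Green's matrix) from \cite{MR2641539}; and (ii) both the local and the global $H^2\times H^1$ estimates are stated in the paper for divergence-free solutions, so the inhomogeneous constraint $\nabla\cdot\G=\delta_h-\phi$ must be lifted --- globally by a Bogovskii-type operator as you propose (legitimate here since $\delta_h-\phi$ is mean-zero and compactly supported in $\Omega$), and locally it causes no trouble on the annuli $\Omega_j'''$ once $K$ is large enough that they miss $T_{\x_0}$ and the fixed contribution of $\phi$ has been split off. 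These are exactly the obstacles you identified, and they are surmountable.
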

A proof of this is given in \cite[Lemma 5.4]{MR2945141}.
The following corollary follows by the same arguments as \Cref{corollary:g1_sigma} and \Cref{corollary:g0_convergence}.
\begin{corollary}
	\label{corollary:lambda_interpolation}
	\begin{equation}
	\twonorm{\sigma^{3/2}\nabla(P_h(\G)-\G)} + \twonorm{\sigma^{3/2}(r_h(\Lambda)-\Lambda)} \leq C.
	\end{equation}
\end{corollary}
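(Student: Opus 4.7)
The plan is to mirror the proofs of \Cref{corollary:g1_sigma} and \Cref{corollary:g0_convergence} step by step, replacing the Green's functions $(\g_1,\lambda_1)$ and $(\g_0,\lambda_0)$ by $(\G,\Lambda)$ and invoking \Cref{proposition:lambda_interpolation} in place of the corresponding $L^1$ estimates. The key observation is that the problem for $(\G,\Lambda)$ has a right-hand side whose singularity order matches that of $(\g_1,\lambda_1)$ (one order more singular than $(\g_0,\lambda_0)$), so the final bound will be $O(1)$ rather than $O(h\lnhh)$.

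First I would split the weighted norm along the dyadic decomposition of \Cref{section:duality_interpolation}. Using $\sigma \leq C\kappa h$ on $\Omega_*$ and $\sigma \leq Cd_j$ on $\Omega_j$, and taking $\kappa = K$ without loss of generality, we obtain
\begin{equation}
\twonorm{\sigma^{3/2}\nabla(P_h(\G)-\G)} \leq C(Kh)^{3/2}\norm{\nabla(P_h(\G)-\G)}_{L^2(\Omega_*)} + C\sum_{j=1}^{J} d_j^{3/2}\norm{\nabla(P_h(\G)-\G)}_{L^2(\Omega_j)},
\end{equation}
together with the analogous splitting for $\twonorm{\sigma^{3/2}(r_h(\Lambda)-\Lambda)}$. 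The innermost term is controlled by global $H^1$ regularity for $(\G,\Lambda)$ with right-hand side $\delta_h-\phi$, combined with \cref{eq:deltah_est} and \Cref{assumption:approximation}. Each dyadic ring term is then handled via \Cref{assumption:approximation} together with local regularity of $(\G,\Lambda)$: away from $T_{\x_0}$ and the support of $\phi$ the pair satisfies a homogeneous divergence-free Stokes system, so \Cref{corollary:localh2} combined with the pointwise bounds in \Cref{proposition:Green_matrix} (applied to the $j=4$ column of the Green's matrix) yields
\begin{equation}
\norm{\G}_{H^2(\Omega_j)} + \norm{\Lambda}_{H^1(\Omega_j)} \leq Cd_j^{-5/2}.
\end{equation}

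Carrying out the remaining manipulations exactly as in the proof of \Cref{corollary:g1_sigma}, the estimate reduces to a bound of the form
\begin{equation}
\twonorm{\sigma^{3/2}\nabla(P_h(\G)-\G)} + \twonorm{\sigma^{3/2}(r_h(\Lambda)-\Lambda)} \leq C + C\Big(\onenorm{\nabla(P_h(\G)-\G)} + \onenorm{r_h(\Lambda)-\Lambda}\Big),
\end{equation}
at which point \Cref{proposition:lambda_interpolation} delivers the claimed bound. The main technical subtlety will be confirming the $d_j^{-5/2}$ scaling in the per-ring regularity (as opposed to the $d_j^{-3/2}$ scaling of \Cref{cor: g0_and_lambda_0} for $(\g_0,\lambda_0)$): combined with the weight factor $d_j^{3/2}$ and the approximation error of order $h$, this produces a geometric series $\sum_j h\,d_j^{-1} = O(h \cdot 2^J) = O(1)$ by \cref{eq:dydic_J}, so that no logarithmic factor arises and the final bound is consistent with the $O(1)$ estimate of \Cref{proposition:lambda_interpolation}.
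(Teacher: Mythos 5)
Your proposal is correct and takes essentially the same route as the paper, whose entire proof is the remark that the corollary ``follows by the same arguments as \Cref{corollary:g1_sigma} and \Cref{corollary:g0_convergence}'': dyadic decomposition, $\sigma\sim d_j$ on $\Omega_j$, local regularity of $(\G,\Lambda)$ of order $d_j^{-5/2}$ via \Cref{corollary:localh2} and \Cref{proposition:Green_matrix}, and the approximation assumptions, giving the geometric series $\sum_j h\,d_j^{-1}=O(1)$. The only cosmetic point is that since the quantity here is a pure interpolation error (not a Galerkin error), no duality or kick-back step occurs, so the term $C\bigl(\onenorm{\nabla(P_h(\G)-\G)}+\onenorm{r_h(\Lambda)-\Lambda}\bigr)$ in your final reduction never actually arises --- the direct summation already yields the constant --- though absorbing it via \Cref{proposition:lambda_interpolation} is harmless.
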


\begin{proof}[Proof of \Cref{theorem:W1infty_localization} (pressure)]
	
	For this we again split the domain into $D_2$ and $\Omega\backslash D_2$ and only consider $\x_0\in T_{\x_0} \subset D_1$.

	The pointwise estimate of $p_h$ can be expanded in the following way
	\begin{equation}
	p_h(\x_0) = (p_h,\delta_h) = (p_h, \delta_h - \phi) + (p_h,\phi)=(p_h, \delta_h - \phi) +(p_h - p, \phi) + (p,\phi).
	\end{equation}
	The the last two terms we may estimate using \Cref{proposition:fem_convergence}
	\begin{equation}
	(p_h - p, \phi) + (p,\phi)
	\leq C \twonorm{\phi}\Big(\twonorm{p-p_h} + \twonorm{p}\Big)
	\leq C \Big(\twonorm{\nabla \u} + \twonorm{p}\Big)\label{eq:p_phi_1}.
	\end{equation}
	By assumption $\phi$ is bounded on $\Omega$.
	For the first term, we can see by \Cref{assumption:divergence} that
	\begin{align}
	(p_h, \delta_h - \phi) &= (p_h, \nabla \cdot \G)= (p_h, \nabla \cdot P_h (\G)) \\
	&= (p, \nabla \cdot P_h (\G)) + (p_h -p, \nabla \cdot P_h (\G))
	:= I_1 + I_2.
	\end{align}
	For $I_1$, we get the following estimate
	\begin{align}
	I_1 &= (p, \nabla \cdot (P_h(\G)-\G)) + (p,\delta_h - \phi)\\
	&\leq \norm{p}_{L^{\infty}(D_2)}\Big(\onenorm{\nabla (P_h(\G)-\G)} + \onenorm{\phi} + \onenorm{\delta_h}\Big) \\
	&\quad + C_d\twonorm{p}\Big(\twonorm{\sigma^{3/2}\nabla (P_h(\G)-\G)} + \twonorm{\sigma^{3/2}\phi} + \twonorm{\sigma^{3/2}\delta_h}\Big)\\
	&\leq C\norm{p}_{L^{\infty}(D_2)} + C_d\twonorm{p} \label{eq:pressure_local_I1}.
	\end{align}
	To arrive at this bound, we used \Cref{lemma:sigma_delta} and that\\ $\twonorm{\sigma^{3/2}\phi}\leq \twonorm{\phi}\inftynorm{\sigma^{3/2}}\leq C$.
	Using \cref{eq:def_stoke_fem_equation} and \cref{eq:green_pressure_1} we see for $I_2$
	\begin{align}
	I_2 &= (\nabla(\u - \u_h),\nabla P_h (\G)) = (\nabla(\u - \u_h),\nabla \G) + (\nabla (\u - \u_h),\nabla (P_h(\G)-\G)) \\
	&= -(\Lambda,\nabla \cdot (\u-\u_h)) + (\nabla (\u - \u_h),\nabla (P_h(\G)-\G)) \\
	&= -(\Lambda - r_h(\Lambda),\nabla \cdot (\u-\u_h)) + (\nabla (\u - \u_h),\nabla (P_h(\G)-\G))\\
	&\leq \Big(\norm{\nabla\u}_{L^{\infty}(D^*)}+\norm{\nabla\u_h}_{L^{\infty}(D^*)})(\onenorm{\Lambda - r_h(\Lambda)} + \onenorm{\nabla(P_h(\G)-\G)}\Big)\\
	&\quad + C_d\Big(\twonorm{\nabla(\u-\u_h)})(\twonorm{\sigma^{3/2}(\Lambda - r_h(\Lambda))} + \twonorm{\sigma^{3/2}\nabla(P_h(\G)-\G)}\Big).
	\end{align}
	Here again we use that $\sigma^{-1}$ is bounded by $d$ on $\Omega \backslash D_2$ and choose $D^*$ appropriately such that we can apply \Cref{theorem:W1infty_localization} for the velocity, e.g. $D^* = B(\xx)_{r^*}\cap \Omega$ with $r^*=r + d/2$. Finally $H^1$ stability for $\u_h$ follows by \Cref{proposition:fem_convergence} and we get 
	\begin{equation}
	I_2 \leq C\Big(\norm{\nabla\u}_{L^{\infty}(D_2)}+\norm{p}_{L^{\infty}(D_2)}\Big)+ C_d \Big(\twonorm{\nabla \u} + \twonorm{p}\Big).
	\end{equation}
\end{proof}

\section{Assumptions and main results in two dimensions}
In this section we give a short derivation of the respective local estimates in $L^{\infty}$ and $W^{1,\infty}$ for the two dimensional case. Note that the localization arguments made in the three dimensional case are independent of the dimension apart from the auxiliary estimates. For two dimensions the respective estimates of the regularized Green's functions and the Ritz projection are all available from the literature albeit under slightly different assumptions on the finite element space. 

In the following, we state the required assumptions, the necessary auxiliary results, their references and finally the local estimates. From now on let $\Omega\subset \R^2$, a convex polygonal domain, and consider the two dimensional analogs $\u$, $p$, $\f$ and their finite element discretization as well as the respective two dimensional function and finite element spaces. The basic results and requirements for the continuous problem from \Cref{section:continuousproblem,ss:fem_approximation} still apply, as referenced in these sections.

As stated in \cite{MR2121575}, assume that we have approximation operators \\$P_h \in \mathcal{L}(H^1_0(\Omega)^2;V_h)$ and $r_h \in \mathcal{L}(L^2(\Omega);\bar M_h)$ which fulfill the two dimensional versions of \cref{assumption:stability,assumption:divergence,assumption:inverse_inequality,assumption:approximation} and in addition the following super-approximation properties.
\begin{assumption}[Super-Approximation II]
	\label{assumption:superapproxmation}
	Let $\mu \in [2,3]$, $\v_h \in \V_h$ and $\vpsi = \sigma^{\mu} \v_h$, then
	\begin{equation}
	\twonorm{\sigma^{-\mu/2}\nabla(\vpsi -  P_h(\vpsi))}\leq C \twonorm{\sigma^{\mu/2}\v_h} \quad \forall \v_h \in \V_h
	\end{equation}
	and if $q_h \in \bar M_h$ and $\xi = \sigma^{\mu}q_h$, then
	\begin{equation}
	\twonorm{\sigma^{-\mu/2}(\xi - r_h(\xi))} \leq Ch \twonorm{\sigma^{\mu/2}q_h} \quad \forall q_h \in \bar M_h.
	\end{equation}
\end{assumption}
As in the three dimensional case, this holds for Taylor-Hood finite element spaces, see, e.g. \cite{MR2121575}.
Apart from this, we need to adapt the estimates for $\delta_h$ and $\sigma$. For the two dimensional versions we get
\begin{align}
\norm{\delta_h}_{W^k_q(T_{\x_0})}&\leq C h^{-k-2(1-1/q)}, \quad 1 \leq q \leq \infty, k=0, 1, \dots, \quad \nu>0 \quad \text{and} \label{eq:deltah_est_2d} \\
\twonorm{\sigma^{\nu}\nabla_k \delta_h} &\leq 2^{\nu/2}C\kappa^{\nu}h^{\nu-k-1} \quad k=0,1.
\end{align}
Let $(\g_1,\lambda_1)$ and $(\g_0,\lambda_0)$ denote the two dimensional regularized Green's functions, defined as in \Cref{section:proof_of_main_theorems} but for two dimensions. Then we get the following convergence estimates for their discrete counterparts. The estimates needed when deriving $W^{1,\infty}$ velocity estimates,
\begin{equation}
\onenorm{\nabla (\g_1-\g_{1,h})} \leq C, \qquad
\twonorm{\sigma\nabla (\g_1-\g_{1,h})} \leq C
\end{equation}
follow from \cite[Theorem 8.1]{MR2121575} using \cref{eq:lower_bound_sigma} and similarly for the pressure estimates where we need
\begin{align}
\onenorm{\nabla(P_h(\G)-\G)} + \onenorm{r_h(\Lambda)-\Lambda} &\leq C,\\
\twonorm{\sigma\nabla(P_h(\G)-\G)} + \twonorm{\sigma(r_h(\Lambda)-\Lambda)} &\leq C
\end{align}
which can be found in \cite[p. 328]{MR2121575}.
In the $L^{\infty}$ case for the velocity we get
\begin{equation}
\onenorm{\nabla(\g_0-\g_{0,h})} \leq Ch \lnhh,\qquad
\twonorm{\sigma\nabla(\g_0-\g_{0,h})} \leq Ch \lnhh^{1/2}
\end{equation}
from \cite[Theorem 4.1, Proof of Theorem 4.2]{MR935076}. The equivalent version of \Cref{lemma:lambda0} is given by \cite[Lemma 3.1]{MR935076}. Finally the estimate for the Ritz projection $R_h$ in two dimensions
\begin{equation}
\inftynorm{R_h \vec z} \leq C\lnhh \inftynorm{\vec z}
\end{equation}
is given in \cite{MR551291}. Note that the local maximum norm estimates for $L^{\infty}$ from \cite{MR3614014} hold as well in two dimensions.
Thus, using the same techniques as in \Cref{section:proof_of_main_theorems} we get the following theorems for $\Omega \subset \R^2$.
\begin{theorem}[Interior $W^{1,\infty}$ estimate for the velocity and $L^{\infty}$ estimate for the pressure]
	\label{theorem:w1_localization_2D}
	Under the assumptions above, $\Omega_1 \subset \Omega_2 \subset \Omega$ with $dist(\bar \Omega_1, \partial \Omega_2)\geq d \geq \bar \kappa h$ and if $(\u,p)\in (W^{1,\infty}(\Omega_2)^2 \times L^{\infty}(\Omega_2))\cap (H^1_0(\Omega)^2 \times L^2_0(\Omega))$ is the solution to \cref{eq:def_stoke_1,eq:def_stoke_2,eq:def_stoke_3}, then it holds for $(\u_h,p_h)$ the solution to \cref{eq:def_stoke_fem_equation}:
	\begin{multline}
	\norm{\nabla \u_h}_{L^{\infty}(\Omega_1)} +\norm{p_h}_{L^{\infty}(\Omega_1)}\nonumber\\
	\leq C\Big(\norm{\nabla\u}_{L^{\infty}(\Omega_2)} + \norm{p}_{L^{\infty}(\Omega_2)} \Big) 
	+C_d\Big(\twonorm{\nabla\u} +\twonorm{p}\Big).
	\end{multline}
	Here, the constant $C_d$ depends on the distance to $\Omega_1$ from $\partial \Omega_2$.
\end{theorem}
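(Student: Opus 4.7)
The plan is to mirror the three-dimensional arguments of \Cref{section:proof_of_main_theorems,section:pressure} for the case where $D_1=B_r(\xx)\cap\Omega$ and $D_2=B_{\rr}(\xx)\cap\Omega$ are balls with $d=\rr-r\geq\bar\kappa h$, and then to promote the statement to general subdomains $\Omega_1\subset\Omega_2$ by the covering argument already used in the proof of \Cref{corollary:w1_localization}. Each auxiliary tool used in three dimensions has a two-dimensional analog already recorded just before the theorem, so the proof is essentially a transcription with a different weight exponent.

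For the \emph{velocity} part, I would fix $\x_0\in T_{\x_0}\subset D_1$ and run the chain of identities from the velocity proof of \Cref{theorem:W1infty_localization} line by line with the two-dimensional regularized Green's function $(\g_1,\lambda_1)$ and its discrete approximation $(\g_{1,h},\lambda_{1,h})$. This yields the decomposition
\begin{equation*}
-\partial_{x_j}(\u_h)_i(\x_0)=I_1+I_2+I_3,
\end{equation*}
with $I_1=(\nabla\u,\nabla(\g_{1,h}-\g_1))$, $I_2=(-\partial_{x_j}\u,\delta_h\vec e_i)$ and $I_3=(p,\nabla\cdot(\g_{1,h}-\g_1))$. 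The term $I_2$ is bounded by $C\norm{\nabla\u}_{L^\infty(T_{\x_0})}$ using \cref{eq:deltah_est_2d} and H\"older. For $I_1$ and $I_3$ I split the domain into $D_2$ and $\Omega\backslash D_2$, pairing $L^\infty$ with the unweighted bound $\onenorm{\nabla(\g_{1,h}-\g_1)}\leq C$ on $D_2$, and using $\sigma^{-1}\leq(\bar\kappa d)^{-1}$ together with the weighted bound $\twonorm{\sigma\nabla(\g_{1,h}-\g_1)}\leq C$ on $\Omega\backslash D_2$. This produces both the local $C\norm{\nabla\u}_{L^\infty(D_2)}$ contribution and the $C_d\twonorm{\nabla\u}$ remainder.

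For the \emph{pressure}, I would imitate the proof of the pressure part of \Cref{theorem:W1infty_localization} with the two-dimensional Green's function $(\G,\Lambda)$ satisfying $\nabla\cdot\G=\delta_h-\phi$. Writing
\begin{equation*}
p_h(\x_0)=(p_h,\delta_h-\phi)+(p_h-p,\phi)+(p,\phi),
\end{equation*}
the last two inner products are controlled by $C(\twonorm{\nabla\u}+\twonorm{p})$ via \Cref{proposition:fem_convergence}, and the first term is rewritten through \Cref{assumption:divergence} and \cref{eq:def_stoke_fem_equation} as a sum $J_1+J_2$ with $J_1=(p,\nabla\cdot P_h(\G))$ and
\begin{equation*}
J_2=-(\Lambda-r_h(\Lambda),\nabla\cdot(\u-\u_h))+(\nabla(\u-\u_h),\nabla(P_h(\G)-\G)).
\end{equation*}
Both $J_1$ and $J_2$ are split into contributions on $D_2$ and on $\Omega\backslash D_2$, using the unweighted $L^1$ bounds $\onenorm{\nabla(P_h(\G)-\G)}+\onenorm{r_h(\Lambda)-\Lambda}\leq C$ locally, and their $\sigma$-weighted versions combined with the $H^1$-stability $\twonorm{\nabla(\u-\u_h)}\leq C\twonorm{\nabla\u}$ on the complement. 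The local piece of $J_2$ brings in $\norm{\nabla\u_h}_{L^\infty(D^*)}$ on an intermediate ball $D^*=B_{r^*}(\xx)\cap\Omega$ with $r<r^*<\rr$, which is absorbed by invoking the already proven velocity part of the theorem applied to the pair $D^*\subset D_2$.

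The main obstacle, and essentially the only substantive difference from the three-dimensional proof, is the bookkeeping of the weight exponent: every occurrence of $\sigma^{3/2}$ in the three-dimensional computations has to be replaced by $\sigma^1$ dictated by dimension, which in turn rescales the constant $C_d\sim d^{-1}$ coming from bounding $\sigma^{-1}$ on $\Omega\backslash D_2$. All the weighted and $L^1$ estimates for the two Green's functions required after this rescaling are precisely the ones recalled from \cite{MR2121575,MR935076} prior to the theorem, so once the rescaling is tracked the argument transfers verbatim. Finally, the promotion from balls to general subdomains $\Omega_1\subset\Omega_2$ with $\operatorname{dist}(\bar\Omega_1,\partial\Omega_2)\geq d$ is obtained by the covering used in the proof of \Cref{corollary:w1_localization}.
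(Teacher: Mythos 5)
Your proposal is correct and follows essentially the same route as the paper, which itself establishes the two-dimensional theorem by observing that the localization arguments of the three-dimensional proofs (the $I_1+I_2+I_3$ splitting for the velocity, the $(\G,\Lambda)$-based argument with an intermediate ball $D^*$ for the pressure, and the covering argument for general subdomains) carry over verbatim once the weight $\sigma^{3/2}$ is replaced by $\sigma$ and the required $L^1$ and weighted Green's function estimates are taken from \cite{MR2121575,MR935076}. Your bookkeeping of the weight exponent and of $C_d$ matches what the paper intends.
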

\begin{theorem}[Interior $L^{\infty}$ error estimate for the velocity]
	Under the assumptions above, $\Omega_1 \subset \Omega_2 \subset \Omega$ with $dist(\bar\Omega_1, \partial \Omega_2)\geq d \geq \bar \kappa h$ and if $(\u,p)\in (L^{\infty}(\Omega_2)^2 \times L^{\infty}(\Omega_2))\cap (H_0^1(\Omega)^2 \times L_0^2(\Omega))$ is the solution to \cref{eq:def_stoke_1,eq:def_stoke_2,eq:def_stoke_3}, then it holds for $(\u_h,p_h)$ the solution to \cref{eq:def_stoke_fem_equation}:
	\begin{multline}
	\norm{\u_h}_{L^{\infty}(\Omega_1)}\nonumber
	\leq C\lnhh\Big(\lnhh \norm{\u}_{L^{\infty} (\Omega_2)} + h\norm{p}_{L^{\infty} (\Omega_2)}\Big) \\
	+ C_d\lnhh^{1/2}\Big( h \norm{\u}_{H^1(\Omega)}+ \twonorm{\u} + h \twonorm{p} \Big).
	\end{multline}
	Here, the constant $C_d$ depends on the distance to $\Omega_1$ from $\partial \Omega_2$.
\end{theorem}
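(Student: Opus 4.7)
The plan is to imitate the proof of \Cref{theorem:Linfty_localization} line by line, substituting each auxiliary ingredient by its two-dimensional analogue collected in the paragraph preceding the statement, and then to pass from balls to the general subdomains $\Omega_1 \subset \Omega_2$ via the (dimension-independent) covering argument already used in the proof of \Cref{corollary:w1_localization}.

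First, I reduce to a ball configuration by fixing $\xx \in \Omega_1$ and considering $D_1 = B_r(\xx) \cap \Omega \subset D_2 = B_{\rr}(\xx) \cap \Omega$ with $\rr - r \geq \bar\kappa h$. For $\x_0 \in T_{\x_0} \subset D_1$, introduce the two-dimensional regularized Green's function $(\g_0,\lambda_0)$ and its finite element approximation $(\g_{0,h},\lambda_{0,h})$, defined exactly as in \cref{eq:def_stoke_green0_1,eq:def_stoke_green0_2,eq:def_stoke_green0_3,eq:def_stoke_green0_fem} but built from the two-dimensional smooth delta function satisfying \cref{eq:deltah_est_2d}. By Galerkin orthogonality,
\[
\u_{h,i}(\x_0) = a((\u_h,p_h),(\g_{0,h},\lambda_{0,h})) = a((\u,p),(\g_{0,h},\lambda_{0,h})) = (\nabla \u,\nabla \g_{0,h}) - (p,\nabla\cdot \g_{0,h}) =: I_1 + I_2.
\]

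For $I_1$ I insert the Ritz projection $R_h\u$, add and subtract $\g_0$, integrate by parts in $(\nabla R_h\u,\nabla \g_0)$, and use $-\Delta \g_0 + \nabla\lambda_0 = \delta_h\vec e_i$ together with the fact that $\delta_h$ is supported in $T_{\x_0}\subset D_1$. Each resulting term splits into pieces on $D_2$ and on $\Omega\setminus D_2$, the outer piece being handled by factoring $\sigma^{-1}\cdot\sigma$ and using $\sigma^{-1}\le d^{-1}$ on $\Omega\setminus D_2$. Applying \Cref{assumption:inverse_inequality} to $\nabla R_h\u$, the two-dimensional global and local max-norm stability of $R_h$ (both with factor $\lnhh$, from \cite{MR551291} together with the fact that \cite{MR3614014} remains valid in two dimensions), the standard bound $\twonorm{R_h\u - \u}\le Ch\norm{\u}_{H^1(\Omega)}$, the 2D Green's estimates $\onenorm{\nabla(\g_0 - \g_{0,h})}\le Ch\lnhh$ and $\twonorm{\sigma\nabla(\g_0 - \g_{0,h})}\le Ch\lnhh^{1/2}$ from \cite{MR935076}, and the 2D analogue of \Cref{lemma:lambda0} from the same reference, one obtains
\[
I_1 \leq C\lnhh^{2}\norm{\u}_{L^{\infty}(D_2)} + C_d\lnhh^{1/2}\bigl(\twonorm{\u} + h\norm{\u}_{H^1(\Omega)}\bigr).
\]

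For $I_2$, use $\nabla\cdot\g_0 = 0$ to rewrite $I_2 = -(p,\nabla\cdot(\g_{0,h}-\g_0))$ and run the same $D_2$ vs.\ $\Omega\setminus D_2$ split, which bounds $I_2$ by $C\lnhh\, h\norm{p}_{L^{\infty}(D_2)} + C_d\lnhh^{1/2}h\twonorm{p}$. Combining the two pieces produces the ball version of the estimate, from which the theorem for general $\Omega_1\subset\Omega_2$ follows by the covering argument of \Cref{corollary:w1_localization}. The main bookkeeping obstacle is to track the logarithmic factor on the outer piece: because the 2D weighted Green's estimate produces only $\lnhh^{1/2}$ (rather than the full $\lnhh$ available in three dimensions), one must verify that neither the Ritz projection stability nor the factor $\twonorm{\sigma^{-1}}$ contributes an additional half-power of $\lnhh$, so that the claimed $C_d\lnhh^{1/2}$ prefactor is indeed recovered and is not inadvertently upgraded to $C_d\lnhh$.
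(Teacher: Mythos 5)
Your proposal matches the paper's intended argument: the paper itself proves this theorem only by asserting that the techniques of \Cref{section:proof_of_main_theorems} carry over verbatim once the three-dimensional auxiliary estimates (Green's function $L^1$ and weighted bounds, the $\lambda_0$ bound, and the Ritz projection stability) are replaced by their two-dimensional counterparts from \cite{MR935076,MR2121575,MR551291}, followed by the covering argument of \Cref{corollary:w1_localization}. You reproduce exactly this reduction, and you correctly identify that the $\lnhh^{1/2}$ prefactor on the outer term originates from the weaker two-dimensional weighted estimates $\twonorm{\sigma\nabla(\g_0-\g_{0,h})}\le Ch\lnhh^{1/2}$ and $\twonorm{\sigma\nabla\lambda_0}\le C\lnhh^{1/2}$.
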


\bibliographystyle{siamplain}
\bibliography{lit}
\end{document}